\documentclass[a4paper,12pt,final]{amsart}

\title{Fishing for complements}
\author{Lidia Angeleri H\"ugel, David Pauksztello and Jorge Vit\'oria}


\usepackage{amssymb}
\usepackage{latexsym}
\usepackage{amsmath}
\usepackage{mathrsfs}
\usepackage{mathtools}
\usepackage{enumitem}
\usepackage{graphicx}
\usepackage{fullpage}
\usepackage{pdflscape}

\usepackage{adjustbox}

\usepackage[T1]{fontenc}


\usepackage{tikz}
\usetikzlibrary{decorations.pathmorphing}
\usetikzlibrary{decorations.pathreplacing}
\usetikzlibrary{positioning}
\usetikzlibrary{calc}
\usetikzlibrary{backgrounds}
\usetikzlibrary{shapes.misc}
\usetikzlibrary{patterns}
\usetikzlibrary{math}

\usepackage{tikz-cd}
\tikzcdset{scale cd/.style={every label/.append style={scale=#1},
    cells={nodes={scale=#1}}}}
    
    
\setlength{\parindent}{0pt}
\setlength{\parskip}{5pt}


\usepackage{xcolor}

\definecolor{ForestGreen}{RGB}{34,139,34}


\usepackage{hyperref}
\hypersetup{
    colorlinks,
    linkcolor={red!50!black},
    citecolor={blue!50!black},
    urlcolor={blue!80!black}
}

\newcommand{\harxiv}[1]{\href{http://arxiv.org/abs/#1}{\texttt{arXiv:#1}}}
\newcommand{\hyref}[2]{ \hyperref[#2]{#1~\ref*{#2}} }


\theoremstyle{plain}
\newtheorem{theorem}{Theorem}[section]
\newtheorem{lemma}[theorem]{Lemma}
\newtheorem{corollary}[theorem]{Corollary}
\newtheorem{proposition}[theorem]{Proposition}
\newtheorem{introtheorem}{Theorem}

\theoremstyle{definition}
\newtheorem{remark}[theorem]{Remark}
\newtheorem{example}[theorem]{Example}
\newtheorem{definition}[theorem]{Definition} 


\newcommand{\sA}{\mathsf{A}}
\newcommand{\sC}{\mathsf{C}}
\newcommand{\sD}{\mathsf{D}}
\newcommand{\sH}{\mathsf{H}}
\newcommand{\sM}{\mathsf{M}}
\newcommand{\sN}{\mathsf{N}}
\newcommand{\sT}{\mathsf{T}}
\newcommand{\sU}{\mathsf{U}}
\newcommand{\sV}{\mathsf{V}}
\newcommand{\sW}{\mathsf{W}}
\newcommand{\sX}{\mathsf{X}}
\newcommand{\sY}{\mathsf{Y}}


\newcommand{\sh}{\mathsf{h}}
\newcommand{\sv}{\mathsf{v}}
\newcommand{\sw}{\mathsf{w}}


\newcommand{\Db}{\mathsf{D}^b}
\newcommand{\Kb}{\mathsf{K}^b}
\newcommand{\per}[1]{\mathsf{per}(#1)}


\newcommand{\cD}{\mathcal{D}}
\newcommand{\cF}{\mathcal{F}}
\newcommand{\cS}{\mathcal{S}}
\newcommand{\cT}{\mathcal{T}}


\newcommand{\bZ}{\mathbb{Z}}


\renewcommand{\geq}{\geqslant}
\renewcommand{\leq}{\leqslant}
\renewcommand{\phi}{\varphi}

\DeclareMathOperator{\End}{\mathsf{End}}
\DeclareMathOperator{\Hom}{\mathsf{Hom}}
\DeclareMathOperator{\Ext}{\mathsf{Ext}}

\newcommand{\add}[1]{\mathsf{add}(#1)}
\newcommand{\Add}[1]{\mathsf{Add}(#1)}
\renewcommand{\mod}[1]{\mathsf{mod}(#1)}
\newcommand{\proj}[1]{\mathsf{proj}(#1)}
\newcommand{\Proj}[1]{\mathsf{Proj}(#1)}
\newcommand{\Mod}[1]{\mathsf{Mod}(#1)}
\newcommand{\Gen}[1]{\mathsf{Gen}(#1)}

\newcommand{\kk}{{\mathbf{k}}}

\newcommand{\orth}{^\perp}

\DeclareMathOperator{\susp}{\mathsf{susp}}
\DeclareMathOperator{\Susp}{\mathsf{Susp}}
\DeclareMathOperator{\cosusp}{\mathsf{cosusp}}
\newcommand{\thick}[2]{\mathsf{thick}_{#1}(#2)}

\newcommand{\SiltL}[2]{\mathsf{Silt}^{#1}_{#2}(\Lambda)}
\newcommand{\siltL}[2]{\mathsf{silt}^{#1}_{#2}(\Lambda)}
\newcommand{\SiltLambda}{\mathsf{Silt}(\Lambda)}
\newcommand{\siltLambda}{\mathsf{silt}(\Lambda)}


\newcommand{\too}{\longrightarrow}
\newcommand{\rightlabel}[1]{\stackrel{#1}{\longrightarrow}}
\newcommand{\bij}{\stackrel{1-1}{\longleftrightarrow}}


\newcommand{\Stovicek}{\v{S}\v{t}ov\'{i}\v{c}ek}


\makeatletter
\@namedef{subjclassname@2020}{%
  \textup{2020} Mathematics Subject Classification}
\makeatother

\newcommand{\Addresses}{{
  \bigskip
  \footnotesize
L.~Angeleri H\"ugel, \textsc{Dipartimento di Informatica - Settore di Matematica, Universit\`a degli Studi di Verona, Strada le Grazie 15 - Ca' Vignal, I-37134 Verona, Italy}\par\nopagebreak
  \textit{E-mail address:} \texttt{lidia.angeleri@univr.it}

  \medskip

D.~Pauksztello, \textsc{School of Mathematical Sciences, Lancaster University, Lancaster, LA1 4YF, United Kingdom}\par\nopagebreak
  \textit{E-mail address:} \texttt{d.pauksztello@lancaster.ac.uk}

  \medskip

J.~Vit\'oria, \textsc{Dipartimento di Matematica ``Tullio Levi-Civita'', Universit\`a degli Studi di Padova, Torre Archimede, via Trieste 63, 35121 Padova, Italy}\par\nopagebreak
  \textit{E-mail address:} \texttt{jorge.vitoria@unipd.it}
}}


\begin{document}

\begin{abstract}
Given a presilting object in a triangulated category, we find necessary and sufficient conditions for the existence of a complement. This is done both for classic (pre)silting objects and for large (pre)silting objects. The key technique is the study of associated co-t-structures. As a consequence of our techniques we recover some known cases of the existence of complements, including for derived categories of some hereditary abelian categories and for silting-discrete algebras. Moreover, we also show that a finite-dimensional algebra is silting discrete if and only if every bounded large silting complex is equivalent to a compact one.
\end{abstract}

\keywords{Silting object, co-t-structure, complement, silting-discrete}

\subjclass[2020]{18G05, 18G80, 16G10}

\maketitle

\section{Introduction} 

The question about the existence of complements is a problem that goes back to the early days of tilting theory. Bongartz showed in 1981 that a partial tilting module over a finite-dimensional algebra $\Lambda$ always admits a finite-dimensional complement \cite{Bo}. Here a partial tilting module is a finite-dimensional module without self-extensions that has projective dimension one.   Already for projective dimension two, however, there are counterexamples to the corresponding generalised statement \cite{RS}.  On the other hand, it was shown in 
\cite{AC02} that complements do exist for  partial tilting modules of any projective dimension if we relax the requirement that they ought to be finite dimensional, working with \textit{large} tilting modules, i.e.~possibly infinite-dimensional tilting modules, instead. This result relies on the theory of cotorsion pairs developed in \cite{ET}, which is a source of left and right approximations in the category $\Mod{\Lambda}$ of all $\Lambda$-modules, with good homological behaviour. 

The analogous problem in silting theory asks whether a presilting object can be completed to a silting object. One has to distinguish between two parallel setups: the classic notion of (pre)silting object from  \cite{AI12,KV88} which is mostly used in triangulated categories satisfying some finiteness conditions, and the more recent definition from \cite{NSZ19,PV18} designed for `large' triangulated categories with arbitrary coproducts. While in the references indicated, these subcategories are simply called (pre)silting, in this paper we will use the adjectives \textit{classic} and \textit{large} to distinguish them.

Bongartz completion extends to silting theory,  see \cite[\S 5]{DeFe15}, \cite[Proposition 6.1]{Wei13}, \cite[Proposition 3.14]{BY}. The silting version is  `basis-free': the assumption that a partial 1-tilting module has a two-term projective resolution is replaced by the condition that the presilting object $X$ is `two-term' with respect to a suitable classic silting object $M$.
Moreover, in the classic setup, complements are known to exist in certain ambient triangulated categories, such as the bounded derived category of a hereditary abelian category, or the category $\per{\Lambda}$ of perfect complexes when $\Lambda$ is a piecewise hereditary algebra or a silting-discrete algebra \cite{BY,DaFu22,AM17}. On the other hand, recent work in \cite{LZ,JSW,K} provides examples of finite-dimensional algebras for which $\per{\Lambda}$ contains classic presilting objects that cannot be completed to a classic silting object.

In analogy to  \cite{AC02}, we show  in the present paper that every classic presilting object in $\per{\Lambda}$  can be completed to a bounded complex of (possibly large) projectives which is a large silting object in the unbounded derived category $\sD(\Mod{\Lambda})$. This  relies  on a development of the  theory of cotorsion pairs in \cite{SS11}  providing a powerful existence result for co-t-structures which we state in Lemma~\ref{saorin-stovicek}. 

In fact, we present general criteria for the existence of complements, both in the context of large and classic silting theory (Theorems~\ref{large completion} and \ref{small completion}). Under mild assumptions on the ambient triangulated category $\sT$, we prove that
a presilting object $X$  admits a complement  if and only if there exists a silting object $M$ satisfying the following conditions, where we denote by $\sX$ and $\sM$ the additive closures of $X$ and $M$ in $\sT$, respectively:

\begin{enumerate}[label=(\roman*)]
\item \label{intermediate} $X$ is intermediate with respect to $M$, i.e. $\sM\geq \sX\geq \sM[n]$ for some  $n>0$; 
\item \label{averaging}  $\sM\geq \sX$ and given the co-t-structures $(\sU_X,\sV_X:=(X[<0])\orth)$ and $(\sU_M,\sV_M:=(M[<0])\orth)$ associated to $X$ and $M$, respectively, the intersection $\sV_X \cap \sV_M $ is again the coaisle of a co-t-structure in $\sT$.
\end{enumerate}

Condition \ref{intermediate} is natural in the classic case, because the existence of a classic silting object entails that   homomorphisms between two objects vanish after shifting one object far enough (Lemma~\ref{only finite}).
Condition \ref{averaging} is the co-t-structure analogue of averaging of t-structures studied in \cite{BPP13}, and it is always satisfied in the large setup thanks to  Lemma~\ref{saorin-stovicek}.
In the classic setup, the choice of the silting object $M$ above matters. Indeed, the averaging condition \ref{averaging} may hold for certain objects $M$ and fail for others, see  Example~\ref{A2}. We summarise our main theorem as follows.
 
\begin{introtheorem}
Let $\sT$ be a triangulated category. The following hold.
\begin{enumerate}
\item {\rm (Theorem~\ref{large completion})} If $\sT$ is algebraic and compactly generated and if $X$ is a large presilting object in $\sT$, then $X$ admits a complement to a large silting object if and only if condition \ref{intermediate} above holds with respect to a large silting object $M$.
\item {\rm (Theorem~\ref{small completion})} If $\add{T}$ is precovering in $\sT$ for every $T$ in $\sT$ (for example, if $\sT$ is $\kk$-linear, Krull-Schmidt and Hom-finite over a field $\kk$) and $X$ is a classic presilting object in $\sT$, then $X$ admits a complement to a classic silting object if and only if condition \ref{averaging} above holds with respect to a classic silting object $M$.
\end{enumerate}
\end{introtheorem}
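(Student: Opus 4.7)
For the necessity direction in both parts, suppose $X$ admits a complement $Y$ so that $M := X \oplus Y$ is silting in the relevant sense. Then $X$ is a direct summand of $M$, so $\sX \subseteq \sM$; this gives $\sM \geq \sX$ immediately, while $\sX \geq \sM[1]$ follows from $M$ being silting, verifying condition \ref{intermediate} with $n = 1$. Condition \ref{averaging} also follows, since the inclusion $\sV_M \subseteq \sV_X$ collapses the intersection $\sV_X \cap \sV_M$ to $\sV_M$, which is the coaisle of the silting co-t-structure of $M$.

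For the sufficiency direction, the first step is to produce the candidate co-t-structure $(\sU, \sV)$ with coaisle $\sV := \sV_X \cap \sV_M$. In part (1), the algebraic and compactly generated hypothesis on $\sT$ together with Lemma~\ref{saorin-stovicek} furnishes this co-t-structure automatically, while \ref{intermediate} is the standing hypothesis. In part (2), \ref{averaging} is precisely the hypothesis that yields $(\sU, \sV)$, and \ref{intermediate} can be arranged for the same $M$ by a sufficiently large shift, invoking Lemma~\ref{only finite} to force the requisite vanishing of positive $\Hom$'s. Either way, $(\sU, \sV)$ lies between the silting co-t-structures of $M$ and $M[n]$.

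Next, I would verify that $X$ belongs to the coheart $\sU \cap \sV$. Membership in $\sV$ is immediate: $X \in \sV_X$ since $X$ is presilting, and $X \in \sV_M$ from $\sM \geq \sX$. Membership in $\sU$ follows since $\sV \subseteq \sV_X$ forces $\sU_X \subseteq \sU$ in the partial order of co-t-structures, and $X$ lies in its own aisle $\sU_X$ as it is in the coheart of the presilting co-t-structure it determines.

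The main obstacle is the final step: promoting $(\sU, \sV)$ to a silting co-t-structure via a silting object $N$ that has $X$ as a direct summand. Since $M[n] \in \sV$ (using $\sX \geq \sM[n]$) and $M$ itself sits in $\sU$, approximation triangles for $M$ with respect to $(\sU, \sV)$ should produce a candidate object $Y$; the plan is then to set $N := X \oplus Y$ and verify that $N$ is silting with additive (or $\Add$) closure exactly equal to the coheart $\sU \cap \sV$. The structural hypotheses on $\sT$ are decisive here: compact generation and algebraicity in part (1) underwrite the passage from a co-t-structure with generating coheart to a silting object, while the precovering assumption on $\add{T}$ in part (2) secures the existence of the approximations needed in the classic (non-cocomplete) setting. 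Once $N$ is verified to be silting, $Y$ is the required complement to $X$.
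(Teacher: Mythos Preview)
Your proposal follows the paper's approach closely: the necessity direction is identical, and for sufficiency you correctly identify the construction of the co-t-structure $(\sU,\sV)$ with $\sV = \sV_X \cap \sV_M$, the truncation of $M$ to produce the candidate complement, and the verification that the coheart equals the additive closure of $X$ together with this complement.

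There is, however, a systematic off-by-one error in your shifts. The coheart of a co-t-structure $(\sU,\sV)$ is $\sU[1] \cap \sV$, not $\sU \cap \sV$. Consequently your claim that ``$X$ lies in its own aisle $\sU_X$'' is false: $X$ lies in $\sU_X[1]$ (equivalently $X[-1] \in \sU_X$), not in $\sU_X$ itself, since $X \in \sV_X$ and $\Hom(X,X) \neq 0$. The same issue recurs when you assert ``$M$ itself sits in $\sU$'': in fact $M \in \sU[1]$, which is what the paper uses. These slips are easily repaired and do not affect the strategy.

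The final verification that $X \oplus Y$ is silting is where most of the paper's work lies (Steps~1--3 of Theorem~\ref{large completion}): identifying the coheart as $\Add{X \oplus V}$ via a precover argument, showing $\sV_{X\oplus V} = \sV$ using Corollary~\ref{intermed}, and checking weak generation. You flag this as ``the main obstacle'' but do not indicate the mechanism; in particular, the role of the algebraic and compactly generated hypotheses in part~(1) is to produce the co-t-structure via Lemma~\ref{saorin-stovicek}, not to pass from coheart to silting object as you suggest.
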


In the last part of the paper we will show how to recover Bongartz completion and the existence of complements for classic presilting objects over hereditary abelian categories or silting-discrete algebras. The latter result also requires  a characterisation of silting-discrete algebras that may be of independent interest.

\begin{introtheorem}[Theorem~\ref{large is small silting discrete}] 
A finite-dimensional algebra $\Lambda$ is silting discrete if and only if every bounded complex of projective modules which is a large silting object in  $\sD(\Mod{\Lambda})$ is (additively) equivalent to a classic silting object in $\per{\Lambda}$.
\end{introtheorem}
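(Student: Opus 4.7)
The plan is to exploit the co-t-structure framework developed in Theorems~\ref{large completion} and~\ref{small completion} together with known characterisations of silting-discreteness in terms of finiteness of intervals in the classic silting poset.

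For the \emph{only if} direction, let $\Lambda$ be silting-discrete and let $T$ be a bounded complex of projectives that is a large silting object in $\sD(\Mod{\Lambda})$. Since $T$ is bounded, one checks that it is intermediate with respect to the canonical silting $\Lambda$, i.e.\ $\Lambda \geq T \geq \Lambda[n]$ for some $n\geq 1$. Silting-discreteness implies that the interval $[\Lambda[n],\Lambda]$ in the classic silting poset contains only finitely many equivalence classes, say represented by $T_1,\dots,T_k$. The crucial step is to show that the co-t-structure $(\sU_T,\sV_T)$ associated to $T$ in $\sD(\Mod{\Lambda})$ restricts, on $\per{\Lambda}$, to the co-t-structure of some $T_i$. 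This relies on the fact that under silting-discreteness, every bounded intermediate co-t-structure in $\per{\Lambda}$ is reached by a finite sequence of irreducible silting mutations from $\Lambda$ and is therefore induced by a classic silting. Once the identification is made, equality of the coheart yields $\Add(T)=\Add(T_i)$, and hence the desired additive equivalence.

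For the \emph{if} direction, I would argue by contrapositive: assume $\Lambda$ is not silting-discrete and construct a bounded large silting not equivalent to any classic silting. Non-discreteness supplies, for some $n\geq 1$, an infinite family $\{T_\alpha\}_{\alpha\in A}$ of pairwise non-equivalent classic siltings, all lying in $[\Lambda[n],\Lambda]$. Consider the intersection of their coaisles $\sV := \bigcap_{\alpha\in A}\sV_{T_\alpha}$. By Lemma~\ref{saorin-stovicek}, $\sV$ is the coaisle of a co-t-structure $(\sU,\sV)$ in $\sD(\Mod{\Lambda})$, and it is bounded because $\sV\supseteq \sV_{\Lambda[n]}$ and $\sU\supseteq \sU_\Lambda$. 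By Theorem~\ref{large completion}, applied with the trivial presilting, this co-t-structure is induced by a large silting object $T$ which, being intermediate with respect to $\Lambda$, is quasi-isomorphic to a bounded complex of projectives. To conclude, I would show that $\Add(T)$ contains infinitely many non-isomorphic indecomposables: each indecomposable summand of a $T_\alpha$ lies in the coheart $\sU\cap\sV$, and since the $T_\alpha$ are pairwise non-equivalent they contribute infinitely many such summands, forcing $T$ not to be additively equivalent to any compact object.

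The main obstacle I expect to face is the last step of the \emph{if} direction: verifying that the coheart of the intersected co-t-structure genuinely accumulates infinitely many non-isomorphic indecomposable summands from the family $\{T_\alpha\}$, rather than collapsing to a finite subfamily. This requires a careful analysis of how indecomposable summands of distinct classic siltings persist in the common co-t-structure and interact with the assumed Krull--Schmidt structure of the coheart.
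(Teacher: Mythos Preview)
Both directions of your proposal have genuine gaps, and the paper's route is quite different.

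\textbf{Only if.} Your key step is the assertion that the co-t-structure $(\sU_T,\sV_T)$ of the large silting $T$ restricts on $\per{\Lambda}$ to the co-t-structure of one of the finitely many classic siltings $T_1,\dots,T_k$. You justify this by saying that ``every bounded intermediate co-t-structure in $\per{\Lambda}$ is reached by a finite sequence of irreducible silting mutations''; but you have not shown that $(\sU_T\cap\per{\Lambda},\sV_T\cap\per{\Lambda})$ is a co-t-structure on $\per{\Lambda}$ at all, nor that matching restricted cohearts forces $\Add(T)=\Add(T_i)$. This is essentially the content of the theorem, not a lemma you can invoke. The paper instead proceeds by induction on $n$: given $T\in\SiltL{n}{S}$ with $S$ compact, it forms the t-structure with coaisle $\sW_S[1]\cap\sW_T$, recognises it as a Happel--Reiten--Smal\o\ tilt of $(\sV_S,\sW_S)$, and uses $\tau$-tilting finiteness of $\End(S)$ (which follows from silting-discreteness) to identify the tilt with that of a compact silting $N$; then $T\in\SiltL{n-1}{N}$. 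At the base case $n=2$, a bijection between $\SiltL{2}{M}$ and silting $\End(M)$-modules, combined with $\tau$-tilting finiteness, shows $T$ is compact up to equivalence.

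\textbf{If.} The step you yourself flag is not merely delicate but false as stated. An indecomposable summand $X$ of $T_\alpha$ lies in $\sV_{T_\alpha}$, but there is no reason for it to lie in $\sV_{T_\beta}$ for $\beta\neq\alpha$; hence there is no reason for it to lie in the intersected coaisle $\sV=\bigcap_\alpha\sV_{T_\alpha}$, let alone in the coheart of $(\sU,\sV)$. So the mechanism by which you hope to produce infinitely many indecomposables in the coheart does not work. There is also a smaller gap: invoking ``Theorem~\ref{large completion} with the trivial presilting'' does not produce a silting object with prescribed co-t-structure $(\sU,\sV)$; you would need to verify that $(\sU,\sV)$ admits a right adjacent nondegenerate t-structure in order to apply the correspondence of Theorem~\ref{large correspondence}. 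The paper avoids all of this: it uses the bijection $\SiltL{2}{M}\leftrightarrow\{\text{silting }\End(M)\text{-modules}\}$ (Proposition~\ref{silting modules bijection}) to translate the hypothesis $\SiltL{2}{M}=\siltL{2}{M}$ into the statement that every silting $\End(M)$-module is finite-dimensional, which is exactly $\tau$-tilting finiteness of $\End(M)$ and hence silting-discreteness.
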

 
This can be regarded as a triangulated version of a result from \cite{AMV19} stating that  $\Lambda$ is $\tau$-tilting finite  if and only if every silting module in $\Mod{\Lambda}$ is equivalent to a support $\tau$-tilting module in  $\mod{\Lambda}$.

The article is organised as follows. In Sections~\ref{sec:prelim} and \ref{sec:co-t-structures} we collect some preliminaries and review the notions of  silting or presilting objects and subcategories, together with their relationship with co-t-structures. Section~\ref{sec:complements} contains the general existence results for complements, while Section~\ref{sec:applications} recovers some known cases as applications of our criteria. Section~\ref{sec:silting-discrete} is devoted to silting-discrete algebras.

\section{Preliminaries} \label{sec:prelim}

In this section we fix some notation and terminology. Unless stated otherwise, $\sT$ will denote an abstract triangulated category with shift functor $[1]$, and all subcategories will be strict and full. Furthermore, when considering abelian categories, we shall consider only those whose derived category exists, i.e.\ we require that morphisms between any two given objects form a set rather than a proper class.

\subsection{Subcategory constructions} \label{sub:subcategory-constructions}
For subcategories $\sU$, $\sV$ and $\sX$ of $\sT$, 
we consider the following subcategories of $\sT$:

\noindent
\begin{tabular}{@{} p{0.13\textwidth} @{} p{0.87\textwidth} @{}}
$\sU * \sV$     & the  subcategory of $\sT$ 
consisting of objects $T \in \sT$ for which there is a triangle $U \to T \to V \to U[1]$ with $U \in \sU$ and $V \in \sV$. 
                         If $\sU * \sU = \sU$ then $\sU$ is said to be \emph{closed under extensions} or \emph{extension-closed}. \\

$\thick{}{\sX}$ & the \emph{thick subcategory generated by $\sX$}, the smallest thick (i.e.\ triangulated and closed under direct summands) subcategory of $\sT$ containing $\sX$. \\
$\susp(\sX)$ & the \emph{suspended subcategory generated by $\sX$}, the smallest  subcategory of $\sT$ containing $\sX$ which is closed under suspensions, extensions and direct summands.\\
$\cosusp(\sX)$  & the \emph{cosuspended subcategory generated by $\sX$}, the smallest  subcategory of $\sT$ containing $\sX$  closed under cosuspensions, extensions and direct summands. \\
$\Susp(\sX)$ & the  smallest  subcategory of $\sT$ containing $\sX$ which is closed under suspensions, extensions and existing coproducts (and thus also  under direct summands). \\

$\add{\sX}$     & the \emph{additive closure of $\sX$ in} $\sT$ formed by  all summands of finite coproducts of objects in $\sX$ (which exist in $\sT$ since it is an additive category). \\       
\end{tabular}

\noindent\begin{tabular}{@{} p{0.13\textwidth} @{} p{0.87\textwidth} @{}}

$\Add{\sX}$     & the \emph{large additive closure of $\sX$ in} $\sT$ given by  all summands of existing coproducts of objects in $\sX$. 
\\ 

$\sX\orth$      & the \emph{right orthogonal} to $\sX$,  given by the objects $T \in \sT$ with $\Hom(X,T)=0$ for each $X \in \sX$.
For a set of integers $I$ (often expressed by symbols such as $>n$, $<n$, $\geq n$, $\leq n$  with the obvious associated meaning), we write $\sX[I]\orth$  for the  subcategory formed by the objects $T\in \sT$ with $\Hom(X[i],T) = 0$ for each $X \in \sX$ and $i \in I$.
                           \\

${}\orth\sX$   & the \emph{left orthogonal} to $\sX$,  given by the objects $T \in \sT$ with $\Hom(T,X)=0$ for each $X \in \sX$.
                          The subcategory ${}\orth(\sX[I])$,  $I\subseteq \bZ$,  is defined analogously as above. \\
                                                              
\end{tabular}

We will use the following abbreviations: 
\[
\sV_\sX= \sX[<0]\orth, \quad \sW_\sX= \sX[\geq 0]\orth, \quad \sU_\sX={}\orth(\sV_\sX).
\]
In the notation of \cite{AMV16,AMV19,AMV20,PV18} we have  $\sV_\sX=\sX^{\perp_{>0}}$,\, $\sW_\sX= \sX^{\perp_{\leq 0}}$,  $\sU_\sX={}^{\perp_0}(\sV_\sX)$. Note that $\sV_\sX$ is a suspended subcategory, while $\sW_\sX$ and $\sU_\sX$ are cosuspended subcategories. When $\sX$ consists of a single object $X$, we just write $\thick{}{X},\Add{X}, \sV_X,\sW_X$ etc. 

We say that a subcategory $\sX$ of $\sT$ \emph{weakly generates} $\sT$ if $(\sX[\bZ])\orth=0$. The adverb ``weakly'' here refers to the fact that if $\thick{}{\sX} = \sT$ then $\sX$ weakly generates $\sT$, while the converse does not hold in general. 

\subsection{Precovering and preenveloping subcategories} \label{sec:approx}

Let $\sU$ be a subcategory of $\sT$ and $T$ be an object of $\sT$. A morphism $f \colon U_T \to T$ is called a \emph{$\sU$-precover} (or a \emph{right $\sU$-approximation}) of $T$ if the induced homomorphism 
\[
\Hom_\sT(U,f) \colon \Hom_\sT(U,U_T) \to \Hom_\sT(U,T)
\]
is surjective for each object $U$ of $\sU$.
If each object $T$ of $\sT$ admits a $\sU$-precover then $\sU$ is said to be \emph{precovering} in $\sT$.
There are dual notions of \emph{$\sU$-preenvelope} and \emph{preenveloping} subcategory.

\subsection{t-structures and co-t-structures} \label{sec:torsion-pairs}

Two kinds of torsion pairs in triangulated categories  play an important role in silting theory.

\begin{definition}[{\cite[Definition 2.2]{IYo08}}]
A pair $(\sU,\sV)$ of idempotent-complete additive subcategories of a triangulated category $\sT$ is said to be a \emph{torsion pair} in $\sT$ if
\begin{enumerate}[label=(\arabic*)]
\item $\Hom_\sT(U,V)=0$, for each $U$ in $\sU$ and $V$ in $\sV$;
\item $\sT = \sU * \sV$.
\end{enumerate}
For each object $T$ of $\sT$, the triangle associated with the decomposition $\sT = \sU * \sV$ is called the \emph{truncation triangle} for $T$.
The subcategories $\sU$ and $\sV$ are the \emph{aisle} and \emph{coaisle}, respectively, of the torsion pair.
A torsion pair $(\sU,\sV)$ is said to be:
\begin{itemize}
\item a \emph{t-structure} if $\sU[1] \subseteq \sU$ (see \cite{BBDG});
\item a \emph{co-t-structure} if $\sU[-1]\subseteq \sU$ (see \cite{Pauk08}, or \cite{Bondarko10} under the name \emph{weight structure});
\item \emph{bounded} if $\sT = \bigcup_{n \in \bZ} \sU[n] = \bigcup_{n \in \bZ}\sV[n]$;
\item \emph{left nondegenerate} if $\bigcap_{n\in\bZ}\sU[n]=0$;
\item \emph{right nondegenerate} if $\bigcap_{n\in\bZ}\sV[n]=0$;
\item \emph{nondegenerate} if it is both left and right nondegenerate;
\item \emph{generated by a set} if there is a set of objects $\sX$ in $\sT$ such that $\sV=\sX\orth$.
\end{itemize}
After \cite{Bondarko10}, given torsion pairs $(\sU,\sV)$ and $(\sV,\sW)$, we say that the former is \emph{left adjacent} to the latter or that the latter is \emph{right adjacent} to the former.

In the case that $(\sU,\sV)$ is a t-structure, the subcategory $\sA = \sU \cap \sV[1]$ called the \emph{heart}. In the case that $(\sU,\sV)$ is a co-t-structure, the subcategory $\sC = \sU[1] \cap \sV$ is called the \emph{coheart} of the co-t-structure. A t-structure $(\sU,\sV)$ is called \emph{split} if each truncation triangle given in condition $(2)$ above is a split triangle, i.e.\ $\sT = \Add{\sU,\sV}$. 
\end{definition}

We recall a few useful results about t-structures and co-t-structures:
\begin{enumerate}
\item The aisle of a torsion pair is always a precovering subcategory and the coaisle is always a preenveloping subcategory.
\item The heart of a t-structure is an abelian category (\cite{BBDG}). The coheart of a co-t-structure is an additive subcategory, but rarely abelian.
\item The truncation triangles for a t-structure are functorially determined.
\item A t-structure with heart $\sA$ is bounded if and only if 
\[
\sT = \bigcup_{i \geq j} \sA[i] * \sA[i-1] * \cdots * \sA[j].
\]
\end{enumerate}

We end this section with a couple of straightforward but useful observations about co-t-structures.
For the first one, see also \cite[Proposition 1.5.6]{Bondarko10}.

\begin{lemma} \label{lem:Postnikov}
Let $(\sU, \sV)$ be a co-t-structure in $\sT$.
Then any object of $\sT$ sits in a (possibly infinite) Postnikov tower, that is, a diagram of the form
\[
\begin{tikzcd}
\cdots \ar{r} & T_3 \ar{r} \ar{d}        & T_2 \ar{r} \ar{d}       &  T_1 \ar{r} \ar[d]                & T_0 = T \ar{d} \\
              & C_3[-3] \ar[dashed]{ul}  & C_2[-2] \ar[dashed]{ul} &  C_1[-1] \ar[dashed]{ul} & V_T \ar[dashed]{ul}
\end{tikzcd}
\]
where
\[
T_1\to T\to V_T\to T_1[1]
\quad \text{and} \quad
T_{i+1}\to T_{i}\to C_i[-i]\to T_{i+1}[1]
\]
are triangles for all $i\geq 1$, and such that $V_T$ lies in $\sV$, $T_i$ lies in $\sU[1-i]$ and $C_i$ in the coheart $\sC = \sU [1] \cap \sV$ for all $i\geq 1$.
Moreover, if $T$ lies in $\sV[-n]$ for some $n >0$, then the Postnikov tower is finite, i.e. $T_n$ lies in $\sC[-n]$. 
\end{lemma}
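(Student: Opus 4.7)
The strategy is to build the tower by iterated truncation with respect to the co-t-structure $(\sU,\sV)$, each time truncating the current $T_i$ after shifting it into the aisle $\sU[1]$ so that the $\sV$-part of the truncation is forced into the coheart $\sC=\sU[1]\cap\sV$.

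First I would set $T_0=T$ and take the truncation triangle $T_1\to T\to V_T\to T_1[1]$ with $T_1\in\sU$ and $V_T\in\sV$, which gives the initial row of the diagram. This already exhibits $T_1\in\sU=\sU[1-1]$ as required. For the inductive step, suppose $T_i\in\sU[1-i]$ has been constructed. Then $T_i[i]\in\sU[1]$, and its truncation triangle produces $U\to T_i[i]\to C_i\to U[1]$ with $U\in\sU$ and $C_i\in\sV$. The key observation is that $C_i$ in fact lies in $\sC$: rotating this triangle gives $T_i[i]\to C_i\to U[1]\to T_i[i+1]$, realising $C_i$ as an extension of $U[1]\in\sU[1]$ by $T_i[i]\in\sU[1]$. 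Since $\sU$ is the aisle of a torsion pair it is extension-closed, hence so is $\sU[1]$, so $C_i\in\sU[1]\cap\sV=\sC$. Setting $T_{i+1}:=U[-i]$ and shifting the triangle down by $i$ produces the required triangle
\[
T_{i+1}\to T_i\to C_i[-i]\to T_{i+1}[1],
\]
with $T_{i+1}\in\sU[-i]=\sU[1-(i+1)]$ and $C_i\in\sC$. Splicing these triangles together yields the Postnikov tower.

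For the final assertion, suppose $T\in\sV[-n]$, so $T[n]\in\sV$. I would prove by secondary induction on $i\in\{0,1,\dots,n\}$ that $T_i[n]\in\sV$. The base case $i=0$ is the hypothesis. For the inductive step, shift the triangle $T_{i+1}\to T_i\to C_i[-i]\to T_{i+1}[1]$ by $n$ and rotate to obtain a triangle
\[
C_i[n-i-1]\to T_{i+1}[n]\to T_i[n]\to C_i[n-i].
\]
Since $\sV$ is a coaisle of a co-t-structure it is closed under positive shifts and under extensions; because $C_i\in\sC\subseteq\sV$ and $i\leq n-1$, both $C_i[n-i-1]$ and $C_i[n-i]$ lie in $\sV$. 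Combined with $T_i[n]\in\sV$ by induction, the extension-closure of $\sV$ yields $T_{i+1}[n]\in\sV$. Taking $i=n$ gives $T_n[n]\in\sV$, and combined with $T_n[n]\in\sU[1]$ from the general construction this shows $T_n\in\sC[-n]$, as required.

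The arguments are largely formal; the only genuine step is the extension-closure observation that forces $C_i\in\sC$, and the mild bookkeeping needed to keep the shifts consistent with the claimed membership $T_i\in\sU[1-i]$. No alternative formulation seems needed, and the inductive structure of the statement matches the inductive construction cleanly.
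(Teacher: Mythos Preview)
Your proof is correct and essentially matches the paper's argument. The only cosmetic difference is that you shift $T_i$ by $i$ and then truncate with respect to $(\sU,\sV)$, whereas the paper truncates $T_i$ directly with respect to the shifted co-t-structure $(\sU[-i],\sV[-i])$; these are the same operation, and your induction on $T_i[n]\in\sV$ is exactly the paper's induction on $T_i\in\sV[-n]$ rephrased.
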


As truncations with respect to co-t-structures are not unique, the Postnikov tower occurring in Lemma~\ref{lem:Postnikov} is not necessarily unique.

\begin{proof}
The existence of the Postnikov tower follows by iteratively taking triangles, starting with $T_0 = T$, and  choosing the co-t-structure $(\sU[-i],\sV[-i])$ for each $T_i$, $i\geq 0$.
It is then easy to observe that the third term in each truncation triangle sits in the subcategories claimed.

Suppose now that $T$ lies in $\sV[-n]$ for some $n>0$. One can show by induction that $T_i \in \sV[-n]$ for each $0 \leq i \leq n$. Indeed, suppose that, for $i\geq 1$, $T_{i-1}$ lies in $\sV[-n]$. One then reads off that $T_i \in \sV[-n]$ from the truncation triangle
\[
C_{i-1}[-i] \to T_i \to T_{i-1} \to C_{i-1}[-i+1]
\]
and the fact that $C_{i-1} [-i] \in \sC[-i] \subseteq \sV[-i] \subseteq \sV[-n]$. Hence, by construction of the tower, $T_n$ lies in $\sV[-n]\cap \sU[-n+1]=\sC[-n]$.
\end{proof}

\begin{corollary}\label{intermed}
Let $(\sU, \sV)$ and $(\sU', \sV')$ be two co-t-structures in $\sT$ with cohearts $\sC$ and $\sC'$, respectively. Assume there is an integer $n>0$ such that $\sV'[n]\subseteq \sV\subseteq\sV'$. Then 
\[
\sC'\subseteq \sC[-n] * \cdots * \sC[-1] * \sC
\quad  \text{and} \quad 
\sC \subseteq \sC' * \cdots * \sC'[n-1] * \sC'[n].
\] 
In particular, it follows that $\thick{}{\sC'}=\thick{}{\sC}.$
\end{corollary}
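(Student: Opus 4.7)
The plan is to apply Lemma~\ref{lem:Postnikov} to obtain the first inclusion and to run an analogous iterative truncation to obtain the second. As a preliminary observation, the aisle of a torsion pair is determined by its coaisle via left orthogonal, so the hypothesis $\sV'[n]\subseteq\sV\subseteq\sV'$ translates to $\sU'\subseteq\sU\subseteq\sU'[n]$.

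For $\sC'\subseteq\sC[-n]*\cdots*\sC[-1]*\sC$, I would take $C'\in\sC'$, whence $C'\in\sU'[1]\subseteq\sU[1]$ and $C'\in\sV'\subseteq\sV[-n]$. Lemma~\ref{lem:Postnikov} applied to $C'$ relative to $(\sU,\sV)$ then supplies a finite Postnikov tower, yielding a filtration
\[
C'\in\{T_n\}*\{C_{n-1}[-(n-1)]\}*\cdots*\{C_1[-1]\}*\{V_T\}
\]
with $T_n\in\sC[-n]$, each $C_i\in\sC$, and a priori only $V_T\in\sV$. The piece not handed to us by Lemma~\ref{lem:Postnikov} is the promotion of $V_T$ to $\sC$: in the first truncation triangle $T_1\to C'\to V_T\to T_1[1]$, both $C'$ and $T_1[1]$ lie in $\sU[1]$ (using $T_1\in\sU\subseteq\sU[1]$ by the co-t-structure closure), so extension-closure of $\sU[1]$ forces $V_T\in\sU[1]\cap\sV=\sC$.

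For $\sC\subseteq\sC'*\sC'[1]*\cdots*\sC'[n]$, I would take $C\in\sC$, so $C\in\sU[1]\subseteq\sU'[n+1]$ and $C\in\sV\subseteq\sV'$. Setting $V_{-1}:=C$, for $k=0,\dots,n-1$ I construct the truncation triangle $U_k\to V_{k-1}\to V_k\to U_k[1]$ relative to $(\sU'[k],\sV'[k])$. An induction on $k$ should maintain $V_{k-1}\in\sU'[n+1]\cap\sV'[k]$, using $U_{k-1}[1]\in\sC'[k]\subseteq\sU'[n+1]$ for the aisle side and $\sV'[k-1]\subseteq\sV'[k]$ for the coaisle side. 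Rotating to $V_k[-1]\to U_k\to V_{k-1}\to V_k$ and invoking extension-closure of $\sV'[k]$ then places $U_k$ in $\sV'[k]$, and combined with $U_k\in\sU'[k]\subseteq\sU'[k+1]$ this yields $U_k\in\sC'[k]$. At $k=n-1$ the iteration terminates because $V_{n-1}\in\sU'[n+1]\cap\sV'[n-1]\subseteq\sC'[n]$. Splicing the triangles gives $C\in\sC'*\sC'[1]*\cdots*\sC'[n]$, and the equality $\thick{}{\sC}=\thick{}{\sC'}$ follows immediately from the two inclusions.

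The main thing to watch is identifying the correct termination length in each procedure so that the final piece lands in a shift of the coheart rather than merely in the coaisle; once that is pinned down, the remainder is routine rotate-and-extend bookkeeping with the torsion-pair axioms.
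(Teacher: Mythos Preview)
Your first inclusion is correct and is essentially the paper's argument (the paper phrases it as taking one truncation triangle $U\to M\to V\to U[1]$ and then applying the Postnikov lemma to place $U$ in $\sC[-n]*\cdots*\sC[-1]$, while you apply the Postnikov lemma directly and then promote $V_T$; this is the same computation).

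In the second inclusion there is a concrete error: the inclusion ``$\sV'[k-1]\subseteq\sV'[k]$'' is backwards. Since the coaisle of a co-t-structure is closed under suspension, one has $\sV'[k]\subseteq\sV'[k-1]$, not the reverse. Hence your induction does not give $V_{k-1}\in\sV'[k]$, and the rotate-and-extend step only yields $U_k\in\sV'[k-1]$ (from $V_k[-1]\in\sV'[k-1]$ and $V_{k-1}\in\sV'[k-1]$), so $U_k\in\sU'[k]\cap\sV'[k-1]=\sC'[k-1]$ rather than $\sC'[k]$. Likewise the termination claim $\sU'[n+1]\cap\sV'[n-1]\subseteq\sC'[n]$ fails: any object of $\sC'[n-1]$ lies in the left-hand side but not in $\sC'[n]$. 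The fix is easy: accept $U_k\in\sC'[k-1]$ and run the loop through $k=n$ (the $k=0$ step is vacuous anyway since $C\in\sV'$), so that the final remainder $V_n$ lands in $\sU'[n+1]\cap\sV'[n]=\sC'[n]$. The paper sidesteps this bookkeeping for the second inclusion altogether: it shifts $T\in\sC\subseteq\sU'[n+1]\cap\sV'$ to $T[-(n+1)]\in\sU'\cap\sV'[-(n+1)]$ and applies Lemma~\ref{lem:Postnikov} directly with respect to $(\sU',\sV')$, obtaining $T[-(n+1)]\in\sC'[-(n+1)]*\cdots*\sC'[-1]$ and hence $T\in\sC'*\cdots*\sC'[n]$.
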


\begin{proof}
First of all, notice that the assumption also yields $\sU'[n] \supseteq \sU \supseteq \sU'$. Take now $M$ in $\sC'=\sU'[1]\cap\sV'$ and consider a triangle 
\[
U\to M\to V\to U[1]
\] 
with $U$ in $\sU$ and $V$ in $\sV$. 
Since $M$ lies in $\sV'\subseteq\sV[-n]$, by Lemma \ref{lem:Postnikov} there is a finite Postnikov tower showing that the object $U$ in the triangle  lies in $\sC[-n] * \cdots * \sC[-1]$. 
Observe further that $M$ lies in $\sU'[1]\subseteq\sU[1]$, hence $V$ lies in $\sC$. We conclude that $M$ lies in $\sC[-n] * \cdots * \sC[-1] * \sC$, as desired. 

For the second inclusion, pick $T$ in $\sC = \sU[1] \cap \sV \subseteq \sU'[n+1]\cap \sV'$
and apply Lemma~\ref{lem:Postnikov} on the object $T[-(n+1)]$. 
\end{proof}

\subsection{Derived categories} \label{sec:derived-categories}

Our main examples come from various categories associated to a coherent ring $R$. Later in Section~\ref{sec:silting-discrete} we will restrict to the case of a finite-dimensional algebra over a field $\kk$; for emphasis in this case we will denote the algebra by $\Lambda$.

\medskip
\noindent
\begin{tabular}{@{} p{0.15\textwidth} @{} p{0.85\textwidth} @{}}
$\Mod{R}$    & the category of right $R$-modules; \\
$\mod{R}$    & the subcategory of $\Mod{R}$ formed by the finitely presented $R$-modules; \\
$\Proj{R}$ & the subcategory of $\Mod{R}$ formed by projective modules; \\
$\proj{R}$ & the subcategory of $\Mod{R}$ formed by finitely generated projective modules;\\
$\sD(R)$   & the derived category $\sD(\Mod{R})$ of $\Mod{R}$;\\
$\Db(R)$  & the bounded derived category $\Db(\mod{R})$  of $\mod{R}$;\\
$\Kb(\Proj{R})$  & the subcategory of $\sD({R})$ given by bounded complexes of  projective modules; \\
$\per{R}$ &  the subcategory of $\Db(R)$ formed by bounded complexes of finitely generated projective modules, also called \textit{perfect complexes}.
\end{tabular}

\section{(Pre)silting and co-t-structures} \label{sec:co-t-structures}

There are two kinds of (pre)silting subcategories/objects in common use, depending on the context in which one is working. There is the classic definition of (pre)silting subcategory/object, used in `small' triangulated categories (\cite{AI12,KV88}), and the more recent definition of a silting object, better adapted to `large' triangulated categories (\cite{AMV20,NSZ19,PV18}). We review these notions below.

\subsection{Classic silting subcategories}

\begin{definition}\label{def:small-silting}
Let $\sM=\add{\sM}$ be a subcategory of a triangulated category $\sT$. We say that $\sM$ is
\begin{itemize}
\item \emph{classic presilting} if $\Hom_\sT(M, M^\prime[>0]) = 0$ for any objects $M$ and $M^\prime$ of $\sM$;
\item \emph{classic silting} if it is classic presilting and $\sT = \thick{}{\sM}$. 
\end{itemize}
An object $M$ in $\sT$ is a \emph{classic (pre)silting object} if $\add{M}$ is a classic (pre)silting subcategory of $\sT$.
\end{definition}

A first fundamental fact about classic silting subcategories is their close relationship to co-t-structures.

\begin{theorem}[{\cite[Corollary 5.8]{MSSS13}}] \label{MSSS}
Let $\sT$ be a triangulated category. The assignment 
\[
\sM\mapsto(\sU_\sM,\sV_\sM)
\]
is a bijection between classic silting subcategories of $\sT$ and bounded co-t-structures in $\sT$. 
Moreover, if $\sM$ is a classic silting subcategory of $\sT$, the associated bounded co-t-structure has coheart $\sM$ and satisfies $\sV_\sM= \susp(\sM)$ and 
$\sU_\sM= \cosusp(\sM[-1])={}\orth(\sM[\geq0])$.
\end{theorem}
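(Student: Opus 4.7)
The plan is to construct both directions of the bijection explicitly and then check they are mutually inverse, with the moreover clause following as a consequence.

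\textbf{Silting to bounded co-t-structures.} Given a classic silting subcategory $\sM$, I would set $\sV_\sM = \sM[<0]\orth$ and $\sU_\sM = {}\orth\sV_\sM$. The orthogonality $\Hom(\sU_\sM,\sV_\sM)=0$ is automatic, $\sV_\sM$ is closed under $[1]$ (hence $\sU_\sM$ is closed under $[-1]$), and $\sM\subseteq\sV_\sM$ is a direct restatement of the presilting hypothesis. The main content is the decomposition $\sT = \sU_\sM * \sV_\sM$: using $\sT = \thick{}{\sM}$, any object $T$ lies in some bounded string $\sM[n]*\cdots*\sM[-n]$, and I would build the required truncation triangle by induction on $n$ via iterated octahedra, starting with an $\add{\sM[n]}$-precover of $T$ and successively resolving the cones. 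The same induction yields boundedness, since each $T$ is then shown to lie in $\sU_\sM[-n]\cap \sV_\sM[n]$.

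\textbf{Bounded co-t-structures to silting.} Given a bounded co-t-structure $(\sU,\sV)$ with coheart $\sC = \sU[1]\cap\sV$, I would verify that $\sC$ is a classic silting subcategory. For presilting: given $C,C'\in\sC$ and $k>0$, one has $C[-k]\in\sU[1-k]\subseteq\sU$ (using $\sU[-1]\subseteq\sU$), while $C'\in\sV$, so $\Hom(C,C'[k]) = \Hom(C[-k],C')=0$ by the torsion pair orthogonality. For thick generation: boundedness together with Lemma~\ref{lem:Postnikov} supplies each object with a finite Postnikov tower whose pieces lie in shifts of $\sC$, forcing $\thick{}{\sC}=\sT$.

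\textbf{Mutual inverseness, formulas, and main obstacle.} The inclusion $\sM\subseteq \sC_\sM := \sU_\sM[1]\cap \sV_\sM$ follows from $\sM\subseteq\sV_\sM$ together with the computation $\Hom(M[-1],V) = \Hom(M,V[1]) = 0$ for any $V\in\sV_\sM$, using the $[1]$-closure of $\sV_\sM$. The reverse $\sC_\sM\subseteq \sM$ is the subtlest point: a careful analysis of where an element $C\in\sC_\sM$ can sit within the finite extensions of shifts of $\sM$ guaranteed by thick generation, using the orthogonality constraints $C\in\sV_\sM$ (excluding negative-shift contributions) and $C\in\sU_\sM[1]$ (excluding positive-shift contributions), confines $C$ to $\add{\sM} = \sM$. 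The identity $(\sU_\sC,\sV_\sC) = (\sU,\sV)$ in the opposite direction follows similarly by comparing the two torsion decompositions on each object, using that $\sC[<0]\subseteq\sU$. The explicit formulas $\sV_\sM = \susp(\sM)$ and $\sU_\sM = \cosusp(\sM[-1]) = {}\orth(\sM[\geq 0])$ then drop out of the truncation triangles built in the first step, which present each $V\in\sV_\sM$ and $U\in\sU_\sM$ as an iterated extension of shifts of $\sM$. The main obstacles are the octahedral induction producing these truncation triangles and the identification of the coheart with $\sM$ itself rather than a possibly larger silting subcategory containing it.
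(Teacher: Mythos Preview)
The paper does not prove Theorem~\ref{MSSS}; it is stated as a citation to \cite[Corollary~5.8]{MSSS13} and used as a black box throughout. There is therefore no proof in the paper against which to compare your sketch.

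That said, your outline follows the right general shape but contains two slips worth flagging. First, the order of the iterated extensions is reversed: for presilting $\sM$, every triangle witnessing membership in $\sM[n]*\sM[n-1]*\cdots*\sM[-n]$ (decreasing shifts) splits, because the connecting morphisms land in $\Hom(\sM,\sM[>0])=0$; so this string is only the additive closure of the union of the $\sM[i]$ and does not exhaust $\thick{}{\sM}$. The correct statement, invoked elsewhere in the paper via \cite[Lemma~2.6]{IYa18}, is $\thick{}{\sM} = \bigcup_{k\geq 0}\sM[-k]*\cdots*\sM[k]$ with \emph{increasing} shifts; once you have this, the regrouping $T\in\big(\sM[-k]*\cdots*\sM[-1]\big)*\big(\sM*\cdots*\sM[k]\big)\subseteq\cosusp(\sM[-1])*\susp(\sM)\subseteq\sU_\sM*\sV_\sM$ gives the truncation triangle directly, with no octahedral induction needed. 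Second, invoking an ``$\add{\sM[n]}$-precover'' is unwarranted: no precovering hypothesis is available for an arbitrary triangulated category, and none is required once the $*$-decomposition above is in hand. A third point you pass over is that the equality $\thick{}{\sM}=\bigcup_{k}\sM[-k]*\cdots*\sM[k]$ itself requires showing these finite extension categories are closed under direct summands; this is where the presilting condition does the real work and is the genuine technical core of the argument. Your treatment of the inverse direction (via finite Postnikov towers) and of the coheart identification is sound.
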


Note that there is a priori no condition imposed on a triangulated category where a classic silting subcategory lives. Nevertheless, the fact that $\sT = \thick{}{\sM}$ imposes that  if $\sM$ is skeletally small (for example, when $\sM=\add{M}$ for a silting object $M$), then so is $\sT$. 

\begin{example}
Let $R$ be a coherent ring. Then $R$ is a classic silting object in $\Kb(\proj{R})$ and $\mathsf{Proj}(R)$ is a classic silting subcategory of $\Kb(\mathsf{Proj}(R))$.
\end{example}

The existence of a silting subcategory does impose a condition on the behaviour of morphisms in the triangulated category.

\begin{lemma}[{\cite[Proposition 2.4]{AI12}}] \label{only finite}
Suppose $\sT$ is a triangulated category containing a classic silting subcategory. Then, for any two objects $X$ and $Y$ in $\sT$, there is $n>0$ such that $\Hom_{\sT}(X,Y[>n])=0$
\end{lemma}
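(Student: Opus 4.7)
The plan is to invoke Theorem~\ref{MSSS} to trade the silting subcategory $\sM$ for the bounded co-t-structure $(\sU_\sM,\sV_\sM)$ it determines, and then exploit boundedness together with orthogonality. Concretely, by that theorem $\sT$ carries a bounded co-t-structure whose coaisle is $\sV_\sM$ and whose aisle is $\sU_\sM$, so that every object of $\sT$ lies in some shift of $\sU_\sM$ and in some shift of $\sV_\sM$. Thus, for the given $X, Y\in \sT$, I would first choose integers $i$ and $j$ with $X\in \sU_\sM[i]$ and $Y\in \sV_\sM[j]$.

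Next I would record the monotonicity built into the definition of a co-t-structure: $\sU_\sM[-1]\subseteq \sU_\sM$ and $\sV_\sM[1]\subseteq \sV_\sM$, so by iteration $\sU_\sM\subseteq \sU_\sM[s]$ and $\sV_\sM[t]\subseteq \sV_\sM$ for every $s,t\geq 0$. Consequently, for every integer $k$ with $k\geq i-j$ one obtains
\[
Y[k]\in \sV_\sM[j+k]\subseteq \sV_\sM[i],
\qquad
X\in \sU_\sM[i].
\]
At this point the vanishing is immediate from orthogonality: shifting $\Hom_\sT(\sU_\sM,\sV_\sM)=0$ by $[i]$ yields $\Hom_\sT(\sU_\sM[i],\sV_\sM[i])=0$, and therefore $\Hom_\sT(X,Y[k])=0$ for all $k\geq i-j$. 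Taking $n=\max(1,i-j)$ gives the desired $n>0$ with $\Hom_\sT(X,Y[>n])=0$.

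I do not expect a substantive obstacle here: once Theorem~\ref{MSSS} is in hand, the lemma is essentially bookkeeping about how the shifts of the aisle and coaisle grow. The only thing one has to be careful with is the direction of the inclusions $\sU_\sM\subseteq \sU_\sM[1]$ and $\sV_\sM[1]\subseteq \sV_\sM$, which is opposite for aisle and coaisle and governs why placing $X$ and $Y$ into far enough shifts of these two subcategories forces their Hom's across large positive shifts to vanish.
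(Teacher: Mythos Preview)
Your argument is correct. The paper does not supply its own proof of this lemma; it simply cites \cite[Proposition~2.4]{AI12}. Your route via Theorem~\ref{MSSS} is a clean and legitimate way to recover the statement inside the paper's framework: boundedness of the co-t-structure $(\sU_\sM,\sV_\sM)$ is precisely what places $X$ in some $\sU_\sM[i]$ and $Y$ in some $\sV_\sM[j]$, and the orthogonality $\Hom_\sT(\sU_\sM,\sV_\sM)=0$ then finishes the job after the monotonicity bookkeeping you describe. The original argument in \cite{AI12} proceeds more directly from $\sT=\thick{}{\sM}$, using that every object lies in a finite iterated extension $\sM[-a]*\cdots*\sM[a]$ for some $a$; your version is essentially a repackaging of the same content through the language of bounded co-t-structures, which is natural given that Theorem~\ref{MSSS} is already available in the paper.
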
 

\subsection{Large silting objects}

The existence of precovers and preenvelopes, see \S\ref{sec:approx}, is central in silting theory. In the classic setting, the relevant precovers and preenvelopes exist under suitable finiteness conditions on the category. In large silting theory, the existence of precovers and preenvelopes is guaranteed provided we work with at most a set (rather than a proper class) of objects. As such, in the large setting we restrict our attention to large silting \emph{objects} rather than \emph{subcategories}.

In this subsection $\sT$ will be a triangulated category that admits all set-indexed coproducts.

\begin{definition}\label{def:large-silting}
An object $M$ of $\sT$ is called
\begin{itemize}
\item \emph{large presilting} if $\Hom_\sT(M, M[>0]) = 0$, and $\sV_M$ is coproduct closed;
\item \emph{large silting} if $(\sV_M, \sW_M)$ is a t-structure in $\sT$. 
\end{itemize}
Two (pre)silting objects $M$ and $M^\prime$ are said to be equivalent if $\Add{M}=\Add{M^\prime}$
\end{definition}

\begin{remark} \label{classic silting in thick}
We make the following observations regarding large (pre)silting objects.
\begin{enumerate}
\item It follows from the very definition of a t-structure that any large silting object is also a large presilting object.
\item Any large (pre)silting object $X$ gives rise to a classic silting subcategory $\mathsf{Add}(X)$ in $\thick{}{\Add{X}}$ because $\sV_X$ is closed under coproducts.
\end{enumerate}
\end{remark}

The definition of large silting is borrowed from \cite{PV18}, while the notion of large presilting is closely related to the notion of partial silting introduced in \cite{AMV20}, and in a wide range of categories they coincide, see Remark~\ref{large silting = large presilting + weak generation} below. The definition of partial silting from \cite{AMV20} has the additional requirement of the existence of a t-structure.

Similar to the classic silting case, for suitable triangulated categories, silting and presilting objects of $\sT$ are related to co-t-structures in $\sT$. 
One context in which this relationship is well understood is that of a compactly generated triangulated category.

\begin{definition} 
An object $X$ in a triangulated category $\sT$  with set-indexed coproducts is \emph{compact} if the functor $\Hom_\sT(X,-)$ commutes with set-indexed coproducts. We say that $\sT$ is \emph{compactly generated} if the subcategory of compact objects $\sT^c$ is skeletally small and weakly generates $\sT$.
\end{definition}

\begin{remark}\label{large silting = large presilting + weak generation}
Compactly generated triangulated categories are examples of a larger class of triangulated categories called \emph{well generated}. It follows from recent results of Neeman in \cite{Neeman} that our large presilting objects coincide with the partial silting objects of \cite{AMV20} in the wider context of well-generated triangulated categories.
Moreover,
Neeman's result in \cite{Neeman}  on the generation of t-structures has the further consequence that in a well-generated triangulated category, an object is large silting if and only if it is a large presilting object which weakly generates the category. This is observed in \cite{AMV20} after {\it loc.~cit.}~Lemma 3.3, as a consequence of \cite[Theorem 1(2)]{NSZ19}. 
Recently, the same relation between silting and presilting objects was extended to arbitrary triangulated categories with coproducts in \cite{Breaz}.
\end{remark}

\begin{example}
The condition that $\sV_M$ is closed under coproducts in Definition~\ref{def:large-silting} doesn't require the object $M$ to be compact. For an example, consider the large silting object $M$ given by the projective resolution of an infinite dimensional tilting module $T$ over a finite-dimensional algebra $\Lambda$ and observe that the associated aisle $\sV_M$ is closed under coproducts, because it consists of the complexes $Y$ in the aisle of the standard t-structure of $\Lambda$ whose zero cohomology $H^0(Y)$ is isomorphic to a quotient of a direct sum of copies of $T$ (see, for example, \cite[Remark 3.6]{B19}).
\end{example}

\begin{theorem}[{\cite[Proposition 3.8, Theorem 3.9 and Corollary 3.10]{AMV20}}, {\cite[Theorem 2]{NSZ19}}] \label{large correspondence}
Let $\sT$ be a compactly generated triangulated category.  The assignment
\[
M \mapsto (\sU_M, \sV_M)  
\]
gives a bijection between equivalence classes of large (pre)silting objects in $\sT$ and co-t-structures $(\sU,\sV)$ that are generated by a set and admit a right adjacent t-structure which is (right) nondegenerate.
If  $M$ is a large silting object of $\sT$, the associated  co-t-structure has coheart $\Add M$, and   $\sV_\sM= \Susp(M)$ is the smallest aisle of $\sT$ containing $M$.
\end{theorem}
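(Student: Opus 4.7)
The plan is to establish the bijection in three stages --- well-definedness, coheart identification (which yields injectivity), and surjectivity --- using Neeman's representability theorem for t-structures in well-generated triangulated categories as the main tool. For clarity I focus on the silting case; the presilting case is handled analogously.

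\textbf{Forward direction.} Given a large silting object $M$, I would first prove $\sV_M = \Susp(M)$. The inclusion $\Susp(M) \subseteq \sV_M$ holds because $M \in \sV_M$ (by the presilting condition) and $\sV_M$ is closed under coproducts, extensions and $[1]$. By Neeman's theorem, $\Susp(M)$ is an aisle whose coaisle is $\Susp(M)\orth = M[\geq 0]\orth = \sW_M$, using that $\Susp(M)$ is generated by the shifts $\{M[i] : i \geq 0\}$. Since the silting hypothesis provides $(\sV_M, \sW_M)$ as a t-structure and aisles are determined by their coaisles, this forces $\sV_M = \Susp(M)$. To construct the co-t-structure $(\sU_M, \sV_M)$, for each $T \in \sT$ I would produce a truncation triangle $U \to T \to V \to U[1]$ with $U \in \sU_M$ and $V \in \sV_M$ by iterating $\Add{M}$-approximations (available in the compactly generated setting) and assembling them into a Postnikov-style tower along the lines of Lemma~\ref{lem:Postnikov}; the right nondegeneracy of $(\sV_M, \sW_M)$ is what ensures the resulting tower ``converges'' to the desired $V$. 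This simultaneously exhibits the co-t-structure as generated by $\{M\}$ with right adjacent t-structure $(\sV_M, \sW_M)$.

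\textbf{Coheart and injectivity.} I would next identify the coheart $\sU_M[1] \cap \sV_M$ with $\Add{M}$. The inclusion $\Add{M} \subseteq \sU_M[1] \cap \sV_M$ is a direct check: $M \in \sV_M$ by presilting, and $M \in \sU_M[1]$ because $\Hom(M, V[1]) = 0$ for every $V \in \sV_M$ (as $\sV_M$ is closed under $[1]$). Conversely, given $N$ in the coheart, $N \in \sV_M = \Susp(M)$ admits a Postnikov filtration built from shifts of $M$, and the extra condition $N \in \sU_M[1]$ collapses all non-trivial shift terms, realising $N$ as a summand of a coproduct of copies of $M$, i.e.\ $N \in \Add{M}$. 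Injectivity of $M \mapsto (\sU_M, \sV_M)$ is then immediate: equal co-t-structures have equal cohearts, hence equivalent silting objects.

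\textbf{Surjectivity and main obstacle.} For surjectivity, given a co-t-structure $(\sU, \sV)$ generated by a set with right adjacent, right nondegenerate t-structure $(\sV, \sW)$, I would extract a silting object $M$ from the coheart $\sC = \sU[1] \cap \sV$ as follows: truncating a generating set for $\sV$ via the co-t-structure produces objects in $\sC$, and an appropriate coproduct yields an $M \in \sC$ with $\Susp(M) = \sV$ and $\sC = \Add{M}$. That $M$ is silting and that $(\sV_M, \sW_M) = (\sV, \sW)$ then follows from aisle/coaisle uniqueness. The main obstacle I anticipate is precisely this extraction of a single coheart-generator $M$ from the set-generation hypothesis: it depends sensitively on the right nondegeneracy of the adjacent t-structure (to prevent unbounded Postnikov depth) and on the compactly-generated setting (to supply the approximations required to realise the coheart-generator as a single object).
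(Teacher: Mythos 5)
The paper cites Theorem~\ref{large correspondence} from \cite{AMV20} and \cite{NSZ19} without reproducing a proof, so there is no in-paper argument to compare against; I assess your proposal on its own terms.

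Your forward argument for $\sV_M = \Susp(M)$ via the Neeman (or Alonso--Jerem\'ias--Souto) aisle-generation theorem, together with the uniqueness of aisles with a given coaisle, is sound. A small slip along the way: $M \in \sU_M[1]$ holds because $\Hom(M, V[1]) = 0$ for all $V \in \sV_M$, and this is immediate from the defining property $\sV_M = M[<0]\orth$ (take $j=1>0$), not from closure of $\sV_M$ under $[1]$. The substantive gap is the \emph{existence} of the co-t-structure $(\sU_M, \sV_M)$. An infinite Postnikov tower built from iterated $\Add{M}$-approximations has no intrinsic limit in a triangulated category; one can form a Milnor homotopy limit (products exist in a compactly generated $\sT$), but one must then prove the resulting object lies in $\sU_M$ and that its cofibre lies in $\sV_M$, and right nondegeneracy of the adjacent t-structure does not obviously deliver either. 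Note also that Lemma~\ref{lem:Postnikov} already presupposes the co-t-structure and its truncation triangles; invoking its tower to \emph{construct} one is circular. The cited sources attack this step with genuinely heavier machinery --- set-generated torsion pair and Brown representability arguments in \cite{NSZ19,AMV20}, or the cotorsion-pair construction of Lemma~\ref{saorin-stovicek} in the algebraic setting. Your surjectivity sketch shares the same character: producing a single object $M$ in the coheart with $\Susp(M) = \sV$ and $\sC = \Add{M}$ from ``an appropriate coproduct'' compresses the hardest part of the proof into a clause with no argument behind it. These are not expository details to be filled in but missing ideas.
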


\begin{remark}\label{comparison}
Let $\sT$ be a compactly generated triangulated category.  An object $M\in\sT^c$ is classic (pre)silting in $\sT^c$ if and only if it is large (pre)silting in $\sT$. Indeed, every object $M\in\sT^c$ generates a t-structure $(\Susp(M), \sW_M)$ in $\sT$ (see, for example, \cite[Theorem A.1]{AJS}), and if $M$ is classic presilting we have $\Susp(M)\subseteq \sV_M$. If $M$ is furthermore classic silting in $\sT^c$ (and, hence, a weak generator in $\sT$), a $\sW_M$-preenvelope of an object $X$ of $\sV_M$ must lie in $M[\mathbb{Z}]^\perp$, thus showing that $X$ must lie in $\Susp(M)$ and that $\Susp(M)=\sV_M$. Conversely, every large silting object $M$ is a weak generator in $\sT$, and 
from, e.g. \cite[Proposition 3.4.15]{Krause22}, it follows that $\thick{\sT^c}{M} = \thick{\sT}{M} = \sT^c$.
\end{remark}

\section{Complements} \label{sec:complements}

Our main problem in this paper is that of finding necessary and sufficient conditions for a given (classic/large) presilting object to be a summand of a silting object.
We will approach this problem by looking at associated  co-t-structures.
 
\begin{definition}
A classic (respectively, large) presilting object $X$ in a triangulated category $\sT$ is said to \textit{admit a complement} if there is an object $V$ in $\sT$ such that $X\oplus V$ is a classic (respectively, large) silting object. 
\end{definition}

\subsection{Intermediate (pre)silting objects}

We recall from \cite{AI12} the following relation on classic presilting subcategories.
When the subcategories are classic silting, this relation defines a partial order \cite[Theorem 2.11]{AI12}. 

\begin{definition} \label{partial-order}
For two classic presilting subcategories $\sX$ and $\sY$ in a triangulated category $\sT$, we set
\[
\sX \geq \sY  \iff \Hom_\sT (X, Y[>0])=0 \text{ for all } X \in \sX \text{ and } Y \in \sY.
\]
In our notation, we have that $\sX \geq \sY \iff  \sY \subseteq \sV_\sX$.
\end{definition}

It follows that if $\sX$ is a classic presilting subcategory in $\sT$, then $\sX[-1] \geq \sX \geq \sX[1]$. We need the following minor generalisation of \cite[Lemma 3.6]{AMY19}.

\begin{lemma}\label{interm}
Let $\sT$ be a triangulated category. Given a classic presilting subcategory $\sX$ and    a  classic silting subcategory $\sY$ of $\sT$, we have that
\begin{enumerate}[label=(\alph*)]
\item  $\sX \geq \sY$ if and only if $\sV_\sY \subseteq \sV_\sX;$ 
\item  for any $n>0$, $\sY\ge\sX\ge \sY[n]$ if and only if $\sX\subseteq\sY*\sY[1]*\ldots*\sY[n]$.
\end{enumerate}
The same statements hold when
$\sT$ is a compactly generated triangulated category, $\sX=\Add{X}$ for a large presilting object $X$, and 
$\sY=\Add{Y}$ for a large silting object $Y$ in $\sT$ with $X$ lying in $\thick{}{\sY}$.
\end{lemma}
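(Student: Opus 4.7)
The overall strategy is to translate everything into the language of co-t-structures (via Theorem~\ref{MSSS} in the classic case, Theorem~\ref{large correspondence} in the large case) and then to apply an iterated truncation. For part~(a), the observation is that $\sX\geq\sY$ is literally $\sY\subseteq\sV_\sX$. One direction comes for free, since the coheart $\sY$ is contained in the coaisle $\sV_\sY$. For the converse, $\sV_\sX$ is automatically closed under positive shifts, extensions and summands; combined with the identification $\sV_\sY=\susp(\sY)$ from Theorem~\ref{MSSS}, this yields $\sV_\sY\subseteq\sV_\sX$. In the large setup, the additional closure of $\sV_X$ under coproducts (built into the definition of large presilting) upgrades the argument, combining instead with $\sV_Y=\Susp(Y)$ from Theorem~\ref{large correspondence}.

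For part~(b), the ``if'' direction is a routine long-exact-sequence calculation using only the presilting vanishing $\Hom(\sY,\sY[>0])=0$ applied to each layer $\sY[k]$ with $0\leq k\leq n$. For the ``only if'' direction, first translate the hypotheses: by~(a), $\sY\geq\sX$ forces $\sX\subseteq\sV_\sY$, while $\sX\geq\sY[n]$ combined with $\sU_\sY={}\orth(\sY[\geq 0])$ from Theorem~\ref{MSSS} gives $\sX\subseteq\sU_\sY[n+1]$. Starting from $V_0=X\in\sV_\sY\cap\sU_\sY[n+1]$, I would iteratively apply the truncation triangles of the shifted co-t-structures $(\sU_\sY[k+1],\sV_\sY[k+1])$ for $k=0,1,\ldots,n-1$, producing
\[
Y_k\to V_k\to V_{k+1}\to Y_k[1]
\]
with $V_{k+1}\in\sV_\sY[k+1]$ and $Y_k\in\sU_\sY[k+1]\cap\sV_\sY[k]=\sY[k]$ (the shifted coheart; the containment $V_k\in\sV_\sY[k]$ is preserved inductively because $\sV_\sY$ is closed under positive shifts). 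The delicate point is verifying that each $V_{k+1}$ remains in $\sU_\sY[n+1]$: applying $\Hom(-,\sY[\geq n+1])$ to the triangle, one sees that $\Hom(Y_k[1],\sY[m])=\Hom(\sY,\sY[m-k-1])=0$ for $m\geq n+1$ and $k\leq n-1$, by the presilting property of $\sY$. After $n$ iterations the remainder $V_n$ lands in $\sU_\sY[n+1]\cap\sV_\sY[n]=\sY[n]$, and concatenating the triangles gives $X\in\sY*\sY[1]*\cdots*\sY[n]$.

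For the large version, the membership-in-an-$n$-fold-extension statement is the same whether taken in $\sT$ or in $\thick{}{\sY}$, where $\sY=\Add{Y}$ is classic silting and $\sX=\Add{X}$ is classic presilting (using that the coproduct-closure of $\sV_Y$, $\sV_X$ upgrades presilting to $\Add{(-)}$-level vanishing, and that $X\in\thick{}{\sY}$); thus~(b) reduces to the classic version applied inside $\thick{}{\sY}$. The statement of~(a), however, concerns coaisles computed in the ambient $\sT$ and so must be proved directly there, using Theorem~\ref{large correspondence} together with coproduct-closure of $\sV_X$, as indicated above. The main obstacle I foresee is the technical bookkeeping in the ``only if'' direction of~(b): one must simultaneously track membership in the two increasing families $\sU_\sY[n+1]$ and $\sV_\sY[k]$ throughout the iteration and recognise the intersection $\sU_\sY[k+1]\cap\sV_\sY[k]$ as the shifted coheart $\sY[k]$ at each step.
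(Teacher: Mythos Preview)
Your argument is correct. Part~(a) and the large-case reduction are essentially identical to the paper's proof.

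For the ``only if'' direction of~(b), however, you take a genuinely different route. The paper first invokes the decomposition $\sT=\thick{}{\sY}=\bigcup_{k\geq 0}\sY[-k]*\cdots*\sY[k]$ (citing \cite[Lemma~2.6]{IYa18}) to place $X$ in some finite iterated extension $\sY[-k]*\cdots*\sY[k]$, and then appeals to \cite[Lemma~3.5(2)]{AMY19} to trim this down to $\sY*\cdots*\sY[n]$ using the two inequalities $\sY\geq\sX\geq\sY[n]$. Your approach instead works entirely inside the co-t-structure $(\sU_\sY,\sV_\sY)$ supplied by Theorem~\ref{MSSS}: you run an iterated truncation with respect to the shifted co-t-structures $(\sU_\sY[k+1],\sV_\sY[k+1])$, verifying at each stage that the aisle term lands in the shifted coheart $\sY[k]$ and that the coaisle term stays inside $\sU_\sY[n+1]$. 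This is essentially a variant of the Postnikov-tower construction of Lemma~\ref{lem:Postnikov}, carried out directly for $X$. Your argument is self-contained and makes the mechanism explicit; the paper's is shorter but outsources the key filtration step to external references. The ``bookkeeping'' you flag as a potential obstacle is handled correctly in your outline: the containment $Y_k\in\sV_\sY[k]$ follows from the rotated triangle and extension-closure of the coaisle, and the containment $V_{k+1}\in\sU_\sY[n+1]$ from the vanishing $\Hom(\sY,\sY[>0])=0$ exactly as you indicate.
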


If condition (b) holds for $\sX$ and $\sY$ as in the lemma then we say that $\sX$ is \emph{intermediate with respect to $\sY$}.

\begin{proof}
(a) $\sV_\sX$ is closed under suspensions, extensions and direct summands, thus $\sY \subseteq \sV_\sX$ implies $\susp(\sY) \subseteq \sV_\sX$, and the claim follows because $\sV_\sY =\susp(\sY)$; see Theorem \ref{MSSS}.

(b)  The if part is clear. For the converse, observe that  
\[
\sT=\thick{}{\sY} = \bigcup_{k\geq 0} \sY[-k] * \sY[-k+1] * \cdots * \sY[k],
\]  
where the last equality holds due to \cite[Lemma 2.6]{IYa18} (see also \cite[Lemma 3.5(c)]{AMY19}). Thus, there is $k\geq 0$ for which $X$ lies in $\sY[-k]*\sY[-k+1]*\ldots*\sY[k]$. Assuming that there is $n>0$ such that $\sY\ge\sX\ge \sY[n]$, we obtain the statement using \cite[Lemma 3.5(b)]{AMY19}.

Finally, in the context of the last assertion, we can use the same arguments to prove (a), taking into account that  $\sV_X$ is closed under coproducts, and  that $\sV_\sY=\Susp{Y}$ by Theorem \ref{large correspondence}. For (b), we observe that $\sX=\Add{X}$ is a classic presilting subcategory in $\thick{}{\sY}$ and $\sY=\Add{Y}$ is a classic silting subcategory in $\thick{}{\sY}$ by Remark~\ref{classic silting in thick}.
\end{proof}

Given a silting object $M$, a useful recent result of Breaz allows us to identify large silting objects $N$ which are intermediate with respect to $M$ in a slightly less cumbersome manner.
We will use this result in the Section 5.

\begin{theorem}[{\cite[Theorem 3.4]{Breaz}}] \label{Breaz}
Let $\sT$ be a triangulated category with coproducts and $M$ a large silting object in $\sT$. The following are equivalent for an object $N$ in $\sT$.
\begin{enumerate}
\item The object $N$ is a large silting object such that $\Add{M} \geq \Add{N} \geq \Add{M[n]}$;
\item The object $N$ satisfies:
\begin{enumerate}[label=(\roman*)]
\item $N$ lies in $\sV_M$ and there is $n>0$ such that $\sV_M[n]\subseteq \sV_N$;
\item $N$ is a weak generator; and,
\item $\Add{N}$ lies in $\sV_N$.
\end{enumerate}
\end{enumerate}
\end{theorem}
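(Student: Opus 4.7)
The strategy is to translate each side of the equivalence into statements about coaisles via Lemma~\ref{interm}, Theorem~\ref{large correspondence}, and the characterisation of large silting as large presilting plus weak generator recorded in Remark~\ref{large silting = large presilting + weak generation}. One direction is largely a matter of unpacking definitions; the other reduces, modulo one technical step, to verifying presiltingness of $N$.

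For (1)$\Rightarrow$(2), I would argue as follows. The inclusion $N \in \sV_M$ is Lemma~\ref{interm}(a) applied to $\Add{M} \geq \Add{N}$. For the second half of (i), Theorem~\ref{large correspondence} identifies $\sV_M$ with $\Susp(M)$, the smallest aisle of $\sT$ containing $M$; shifting, $\sV_M[n] = \Susp(M[n])$. Since $\Add{N} \geq \Add{M[n]}$ gives $M[n] \in \sV_N$, and $\sV_N$ is a suspended subcategory closed under extensions, summands, and coproducts (the latter by large presiltingness of $N$), the inclusion $\sV_M[n] \subseteq \sV_N$ is forced. Condition (ii) is built into the definition of large silting by Remark~\ref{large silting = large presilting + weak generation}, and (iii) is immediate from $N \in \sV_N$ (which is a restatement of $\Hom(N,N[>0])=0$) together with closure of $\sV_N$ under summands and coproducts.

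For (2)$\Rightarrow$(1), I would invoke Remark~\ref{large silting = large presilting + weak generation}: since (ii) already gives weak generation, it suffices to show that $N$ is large presilting. The Hom-vanishing $\Hom(N,N[>0])=0$ follows because $N \in \Add{N} \subseteq \sV_N$ by (iii). The intermediacy inequalities then follow quickly: Lemma~\ref{interm}(a) turns $N \in \sV_M$ into $\Add{M} \geq \Add{N}$, while $M \in \sV_M$ (as $M$ itself is large silting) combined with $\sV_M[n] \subseteq \sV_N$ yields $M[n] \in \sV_N$, hence $\Add{N} \geq \Add{M[n]}$.

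\textbf{The main obstacle} is the remaining ingredient of large presiltingness, namely that $\sV_N$ is closed under coproducts; this is not formal because $N$ is not assumed compact. The approach I would take is to exhibit $(\sU_N, \sV_N) := ({}\orth\sV_N,\sV_N)$ as an honest co-t-structure in $\sT$, whose coaisle is then automatically closed under existing coproducts. Concretely, for any $T \in \sT$ I would build a truncation triangle $U \to T \to V \to U[1]$ with $V \in \sV_N$ and $U \in {}\orth\sV_N$ by a two-step process: first take the truncation of $T$ with respect to the co-t-structure $(\sU_M[-n], \sV_M[-n])$ associated with the shifted silting object $M[-n]$, whose coaisle $\sV_M[-n]$ contains $\sV_N$ from above (since $\sV_M[n]\subseteq\sV_N$ implies $\sV_N\subseteq\sV_M[-n]$ after using $\sV_M\subseteq\sV_M[-n]$ iteratively on a Postnikov tower as in Lemma~\ref{lem:Postnikov}); then correct the $\sU_M[-n]$-part by an $\Add{N}$-approximation, using (iii) to guarantee that the correction stays inside $\sV_N$. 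Equivalently, I would try to establish the description $\sV_N = \Susp(N) * \sV_M[n]$, which is coproduct-closed as both factors are. The balancing act between the two hypotheses (i) and (iii)—showing that together they suffice to build the required approximations for every object $T$—is the technical heart of the argument, and is where a careful appeal to the existence results for co-t-structures (such as Lemma~\ref{saorin-stovicek}) is likely to be needed.
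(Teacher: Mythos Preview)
The paper does not prove this theorem; it is quoted verbatim from \cite{Breaz} and used as a black box (in Step~6 of Proposition~\ref{silting modules bijection}). There is therefore no ``paper's own proof'' to compare against.

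That said, your sketch is sound on the direction $(1)\Rightarrow(2)$ and correctly isolates the substantive difficulty in $(2)\Rightarrow(1)$: establishing that $\sV_N$ is coproduct-closed. However, your proposed attack on this step has two problems. First, the claimed inclusion $\sV_N\subseteq\sV_M[-n]$ is not justified: the hypothesis $\sV_M[n]\subseteq\sV_N$ goes the wrong way, and $N\in\sV_M$ does not by itself bound $\sV_N$ from above by any shift of $\sV_M$; your parenthetical about Postnikov towers does not repair this. Second, and more seriously, your fallback to Lemma~\ref{saorin-stovicek} is unavailable here: that lemma requires $\sT$ to be algebraic and compactly generated, whereas the theorem is stated for an arbitrary triangulated category with coproducts. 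So the technical heart of the argument remains genuinely open in your proposal, and the tools you reach for are not the right ones for this level of generality. Breaz's original argument proceeds differently, working directly with the t-structure $(\sV_M,\sW_M)$ to verify that $(\sV_N,\sW_N)$ is a t-structure; if you want to complete the proof you should consult \cite{Breaz} rather than rely on the co-t-structure existence results used elsewhere in this paper.
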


\subsection{Complements for large presilting}

We are now ready to state the first result concerning the existence of complements in the context of algebraic, compactly generated triangulated categories. 
Recall from \cite{Kel94} that a triangulated category is \emph{algebraic} if it is equivalent to the stable category of a Frobenius exact category.
We first need the following observation from \cite{SS11}.

\begin{lemma} \label{saorin-stovicek}
Let $\sT$ be an algebraic, compactly generated triangulated category. Suppose $\cS$ is a set of objects such that $\cS[-1] \subset \cS$ (resp.~$\cS[1] \subset \cS$). Then, $({}\orth(\cS\orth), \cS\orth)$ is a co-t-structure (resp.~t-structure) in $\sT$.
\end{lemma}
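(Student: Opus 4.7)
The orthogonality $\Hom_\sT(U,V)=0$ for $U\in\sU={}\orth(\cS\orth)$ and $V\in\sV=\cS\orth$ holds by the very definition of $\sU$, so the only axioms to verify are the shift-closure condition and the existence of truncation triangles.

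For the shift condition, suppose $\cS[-1]\subseteq\cS$. Given any $V\in\cS\orth$ and $S\in\cS$, one computes $\Hom_\sT(S,V[1])=\Hom_\sT(S[-1],V)=0$ because $S[-1]\in\cS$; hence $\sV[1]\subseteq\sV$, equivalently $\sU[-1]\subseteq\sU$, which is exactly the shift-closure required of the aisle of a co-t-structure. The t-structure case ($\cS[1]\subseteq\cS$) is entirely symmetric.

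The crux is therefore the existence, for every $T\in\sT$, of a triangle $U\to T\to V\to U[1]$ with $U\in\sU$ and $V\in\sV$. Here I would invoke the cotorsion-pair machinery of Saor\'in-\Stovicek. Since $\sT$ is algebraic, fix a Frobenius exact category $\sE$ with $\sT\simeq\underline{\sE}$; compact generation of $\sT$ allows one to arrange that $\sE$ is efficient in the sense of [SS11] (cocomplete, weakly idempotent complete, with exact directed colimits and a generating set of small objects). Lift $\cS$ to a set of representatives $\widetilde\cS$ in $\sE$, enlarging it with suitable cosyzygies so that the $\Ext^{\geq1}_\sE$-orthogonal of $\widetilde\cS$ in $\sE$ coincides with the Hom-orthogonal $\cS\orth$ in $\sT$. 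The main theorem of [SS11] guarantees that $\widetilde\cS$ generates a complete cotorsion pair $(\cF,\mathcal{G})$ in $\sE$; the functorial special precovers supplied by completeness descend, under the closure hypothesis on $\cS$, to triangles in $\sT$ exhibiting $\sT=\sU*\sV$.

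The main obstacle I anticipate is the dictionary between the Ext-orthogonality used inside $\sE$ and the Hom-orthogonality that defines $\sV$ inside $\sT$: one must set up $\widetilde\cS$ so that $\widetilde\cS^{\perp_{\geq1}}=\cS\orth$ (modulo projectives), and then verify that the shift-closure of $\cS$ in $\sT$ forces $\mathcal{G}$ to be closed under cosyzygies (and $\cF$ under syzygies), so that the short exact approximation sequences in $\sE$ really produce distinguished triangles in $\sT$ rather than sequences that are destroyed by stabilisation. Once this is in place, the t-structure case recovers the Alonso-Jerem\'ias-Souto generation theorem, and the co-t-structure case follows by the same argument with the roles of syzygies and cosyzygies interchanged.
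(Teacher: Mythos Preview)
Your approach is essentially the same as the paper's: reduce to the cotorsion-pair machinery of Saor\'in--\v{S}\v{t}ov\'i\v{c}ek [SS11] by presenting $\sT$ as the stable category of an efficient Frobenius exact category. Two remarks on the execution.

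First, the step ``compact generation of $\sT$ allows one to arrange that $\sE$ is efficient'' is not automatic from the definition of algebraic; an arbitrary Frobenius presentation need not be efficient. The paper handles this by a specific citation chain: Keller's theorem [Kel94, \S4.3] identifies $\sT$ with the derived category of a small dg category, and then [SP16, Remark 2.15] supplies the efficient Frobenius presentation for such derived categories. Your sentence is correct in spirit but should point to this route (or an equivalent one) rather than assert it as folklore.

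Second, the ``obstacle'' you anticipate---translating the complete cotorsion pair in $\sE$ into genuine (co-)t-structure truncation triangles in $\sT$, and matching $\Ext$-orthogonality upstairs with $\Hom$-orthogonality downstairs---is precisely the content of [SS11, Proposition~3.3 and Corollary~3.5]. Those results are stated at the level of the stable category and already take the shift-closure hypothesis on $\cS$ as input, so you can cite them directly rather than re-deriving the passage through syzygies/cosyzygies. The paper does exactly this: once the efficient Frobenius model is in hand, it simply invokes those two results and is done. Your sketch of what happens inside them is accurate, but unnecessary for the proof.
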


\begin{proof}
By \cite[\S 4.3, Theorem]{Kel94}, $\sT$ is equivalent to the derived category of a small differential graded category. From \cite[Remark 2.15]{SP16} it follows that there $\sT$ can be seen as the stable category of an efficient Frobenius exact category in the sense of \cite[Definition 2.6]{SS11}. Finally, by \cite[Proposition 3.3 and Corollary  3.5]{SS11}, any set $\cS$ of objects such that $\cS[-1] \subset \cS$ (resp.~$\cS[1] \subset \cS$) gives rise to a co-t-structure (resp.~t-structure) $({}^\perp(\mathcal{S}\orth),\mathcal{S}\orth)$.
\end{proof}

\begin{theorem}\label{large completion}
Let $\sT$ be an algebraic compactly generated triangulated category, and suppose that $X$ is a large presilting object in $\sT$. The following statements are equivalent.
\begin{enumerate}
\item $X$ admits a complement $V$ such that $X\oplus V$ is a large silting object in $\sT$.
\item There is a large silting object $M$ in $\sT$ such that $X$ lies in $\thick{}{\Add{M}}$.
\item There are a large silting object $M$ in $\sT$ and an integer $n>0$ such that 
\[
\Add{M}\geq \Add{X}\geq \Add{M[n]}.
\]
\end{enumerate}
Moreover, for any $M$ satisfying the equivalent conditions (2) and (3) above, there exists a complement $V$ such that $\thick{}{\Add{M}}=\thick{}{\Add{X\oplus V}}.$
\end{theorem}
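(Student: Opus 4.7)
The strategy is to establish the three implications cyclically. The implication $(1) \Rightarrow (2)$ is immediate: if $X \oplus V$ is a large silting object, take $M := X \oplus V$, whence $X \in \Add{M} \subset \thick{}{\Add{M}}$. For $(2) \Rightarrow (3)$, I would combine $X \in \thick{}{\Add{M}}$ with the standard description of thick closures used in the proof of Lemma~\ref{interm}(b) to obtain $X \in \Add{M[-k]} * \cdots * \Add{M[k]}$ for some $k \geq 0$. Replacing $M$ by the shifted large silting object $M[-k]$ and invoking Lemma~\ref{interm}(b) yields $\Add{M[-k]} \geq \Add{X} \geq \Add{M[k]}$, producing (3) with $n = 2k$ (the trivial case $k = 0$ is handled by taking $n = 1$).

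The substantive content lies in $(3) \Rightarrow (1)$. Given $M$ large silting and $n > 0$ with $\Add{M} \geq \Add{X} \geq \Add{M[n]}$, the plan is to apply Lemma~\ref{saorin-stovicek} to the set $\cS := \{X[i], M[i] : i < 0\}$, which satisfies $\cS[-1] \subset \cS$, obtaining a co-t-structure $(\sU, \sV) = ({}\orth(\sV_X \cap \sV_M), \sV_X \cap \sV_M)$. I would then check that $X$ lies in the coheart $\sU[1] \cap \sV$: the condition $\Add{M} \geq \Add{X}$ places $X \in \sV_M$, $X \in \sV_X$ follows from the presilting property, and $X \in \sU[1]$ reduces, via $V \in \sV_X$, to the vanishing $\Hom(X, V[1]) = 0$ for each $V \in \sV$. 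Theorem~\ref{large correspondence} should then furnish a large silting object $N$ whose additive closure coincides with this coheart, and the membership $X \in \Add{N}$ gives a decomposition $N = X \oplus V$, producing the desired complement.

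The main obstacle will be verifying the remaining hypotheses of Theorem~\ref{large correspondence} on $(\sU, \sV)$, namely the existence of a right adjacent, right nondegenerate t-structure. The natural candidate arises from the t-structure version of Lemma~\ref{saorin-stovicek} applied to $\cS' := \{X[i], M[i] : i \geq 0\}$, which satisfies $\cS'[1] \subset \cS'$ and yields a t-structure with coaisle $\sW_X \cap \sW_M$; right adjacency amounts to the identification ${}\orth(\sW_X \cap \sW_M) = \sV$. The inclusion $\sV \subseteq {}\orth(\sW_X \cap \sW_M)$ is free from $(\sV_M, \sW_M)$ being a t-structure, while the reverse inclusion is the delicate point and would exploit the intermediacy $X \in \Add{M} * \cdots * \Add{M[n]}$ from Lemma~\ref{interm}(b) together with $(\sV_M, \sW_M)$-truncations to control the $\sW_X$-component. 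Right nondegeneracy descends from $\bigcap_n \sW_M[n] = 0$, which is the weak generation of $M$. Finally, the moreover assertion $\thick{}{\Add{M}} = \thick{}{\Add{X \oplus V}}$ is an application of Corollary~\ref{intermed} to $(\sU_M, \sV_M)$ and $(\sU, \sV)$, once one notes the sandwich $\sV_M[n] \subseteq \sV \subseteq \sV_M$: the right inclusion is trivial, and the left is obtained from the intermediacy of $X$ by checking $\Hom(X[i], T[n]) = 0$ for $T \in \sV_M$ and $i < 0$.
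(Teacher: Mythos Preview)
Your treatment of $(1)\Rightarrow(2)\Rightarrow(3)$ and of the ``moreover'' clause via Corollary~\ref{intermed} is fine and matches the paper. The problem is in $(3)\Rightarrow(1)$: your plan to invoke Theorem~\ref{large correspondence} in reverse requires the co-t-structure $(\sU,\sV)$ to admit a right adjacent t-structure, and your candidate $({}\orth(\sW_X\cap\sW_M),\,\sW_X\cap\sW_M)$ does not work. The ``delicate'' inclusion ${}\orth(\sW_X\cap\sW_M)\subseteq\sV$ is simply false. Indeed $M\in\sV_M\subseteq{}\orth\sW_M\subseteq{}\orth(\sW_X\cap\sW_M)$, whereas $M\in\sV_X$ would mean $\Add{X}\geq\Add{M}$, which is not part of the hypothesis. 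Already the trivial instance $X=M[1]$ gives $\sV_X\cap\sV_M=\sV_M[1]$ while ${}\orth(\sW_X\cap\sW_M)={}\orth\sW_M=\sV_M$. So neither the intermediacy of $X$ nor $(\sV_M,\sW_M)$-truncations will rescue this inclusion; the aisle of the t-structure you wrote down is $\Susp(X\oplus M)$, and that contains $M$, which $\sV$ need not.

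The paper sidesteps this by never appealing to Theorem~\ref{large correspondence} to produce the silting object. Instead it builds the complement by hand: take the $(\sU,\sV)$-truncation $U\to M\to V\to U[1]$ and set the complement equal to $V$. One then shows that the coheart of $(\sU,\sV)$ is exactly $\Add{X\oplus V}$ by an $\Add{X\oplus V}$-precover argument, using crucially that any morphism $M\to C$ with $C\in\sV$ factors through the preenvelope $\Psi\colon M\to V$. With the coheart identified, $X\oplus V$ is large presilting, and the sandwich $\sV_M[n]\subseteq\sV\subseteq\sV_M$ together with Corollary~\ref{intermed} gives $M\in\thick{}{\Add{X\oplus V}}$; weak generation of $X\oplus V$ follows, and one concludes via Remark~\ref{large silting = large presilting + weak generation}. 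The point is that the correct right adjacent t-structure has aisle $\sV$ and coaisle $\sW_{X\oplus V}$, not $\sW_X\cap\sW_M$; replacing $M$ by its $\sV$-reflection $V$ is exactly what is needed, and this is what your approach is missing.
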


\begin{proof}
(1) $\Rightarrow$ (2): If $X$ admits a complement, $V$ say, then $M=X\oplus V$ is a large silting object satisfying (2).

(2) $\Rightarrow$ (3): If $X$ lies in $\thick{}{\Add{M}}$, it follows from \cite[Lemma 3.5(c)]{AMY19} and Remark~\ref{classic silting in thick} that $X$ lies in $\Add{M}[-k]*\Add{M}[-k+1]*\cdots *\Add{M}[k]$ for some $k>0$. Choose $M':=M[-k]$ and it then follows from Lemma \ref{interm}(b) that 
$\Add{M'} \geq \Add{X} \geq \Add{M'[2k]}$.

(3) $\Rightarrow$ (1):
First note that, as $\sT$ is compactly generated, by Theorem \ref{large correspondence} there are co-t-structures $(\sU_X,\sV_X)$ and $(\sU_M,\sV_M)$ with cohearts $\Add{X}$ and $\Add{M}$, respectively. Aplying Lemma~\ref{saorin-stovicek} on the set  $\cS = \{ M[k],X[k] \mid k<0\}$, we obtain a co-t-structure
\[
(\sU, \sV := \sV_M \cap \sV_X).
\]
We write $\sC := \sU[1] \cap \sV$ for its coheart. 

Consider now a decomposition of $M$ with respect to this co-t-structure 
\begin{equation} \label{complement}
\begin{tikzcd}
U \ar[r, "\Phi"] & M \ar[r, "\Psi"] & V \ar[r] & U[1]
\end{tikzcd}
\end{equation}
with $U$ in $\sU$ and $V$ in $\sV$. We claim that $V$ is a complement for $X$. 
We proceed in a sequence of steps.

{\bf Step 1:} {\it We have $\sC = \Add{X\oplus V}$.}

We first show that  $\Add{X\oplus V} \subseteq \sC$. The object $X$ lies in both $\sV_X$  and $\sV_M$ since it is large presilting and since $\Add{M}\geq \Add{X}$ by assumption, respectively. Moreover, since $\sV \subseteq \sV_X$, it follows that $\Hom_{\sT}(X,\sV[1])=0$, whence $X$ lies in $\sU[1] = {}\orth \sV[1]$ and we have that $X$ lies in $\sC$.
Similarly, as $M$ belongs to $\sU[1] = {}\orth \sV[1]$, we observe from the triangle (\ref{complement}) that $V$ lies in $\sU[1]$ and, thus, in $\sC$.
As $\sC$ is closed under coproducts and direct summands, we see that $\Add{X\oplus V} \subseteq \sC$. 

For the reverse inclusion, consider  an object $C$ of $\sC$ together with an  $\Add{X\oplus V}$-precover $\phi \colon K\to C$, which exists since $\sT$ has set-indexed coproducts. We have a triangle 
\[
\begin{tikzcd}
K \ar[r,"\phi"] & C \ar[r,"\psi"] & L \ar[r, "\theta"] & K[1]
\end{tikzcd}
\] 
and we claim that $L$ lies in $\sV[1]$. It is clear that $L$ lies in $\sV$ since $\sV$ is a suspended subcategory of $\sT$. Therefore, it remains only to show that $\Hom_{\sT}(X,L)=0=\Hom_{\sT}(M,L)$. For any map $f \colon X\longrightarrow L$, we have that $\theta f = 0$ since $K$ lies in $\sV$. Therefore, there is a map $\overline{f}\colon X\longrightarrow C$ such that $f=\psi\overline{f}$. But $\phi$ is an $\Add{X\oplus V}$-precover and therefore $\overline{f}$ must factor through $\phi$, thus showing that $f=0$ and, hence, $\Hom_{\sT}(X,L)=0$.
On the other hand, for any map $g\colon M\longrightarrow L$, we also have that $\theta g=0$. Thus, there is a map $\overline{g} \colon M\longrightarrow C$ such that $g=\psi\overline{g}$. Since $C$ lies in $\sV$, the map $\overline{g}$ must factor through the $\sV$-preenvelope $\Psi$ from (\ref{complement}), i.e. there is $\hat{g} \colon V \longrightarrow C$ such that $g = \psi \hat{g} \Psi$. Again, because $\phi$ is an $\Add{X\oplus V}$-precover, $\hat{g}$ must factor through $\phi$, showing that $g=0$. Hence $L$ lies in $\sV[1]$ as claimed, and thus $\psi=0$ and $\phi$ is a split epimorphism. This  proves that $C$ lies in $\Add{X \oplus V}$.

{\bf Step 2:} {\it The object $X \oplus V$ is a large presilting}.

As $X\oplus V$ lies in the coheart of a co-t-structure, it satisfies $\Hom_\sT(X\oplus V,X\oplus V[>0])=0$. 
Since $\sV$ is closed under coproducts, it suffices to show $\sV_{X\oplus V}  =\sV$.
As $X\oplus V$ lies in $\sC \subseteq \sU[1]$, we have that $\sV \subseteq \sV_{X\oplus V}$. To see the reverse inclusion, we recall that by assumption 
and Lemma~\ref{interm}(a) there is $n>0$ such that  $\sV_{M[n]}\subseteq \sV_X$, and since $\sV_M[n]=\sV_{M[n]}$ and $\sV=\sV_X\cap\sV_M$, we find that 
\[
\sV_M[n] \subseteq \sV \subseteq \sV_M.
\]
By Corollary~\ref{intermed}
we have that $M$ lies in $\sC[-n] * \cdots * \sC[-1] * \sC$. We conclude then that $\sV_\sC \subseteq \sV_M$, and since $\sC = \Add{X\oplus V}$ by Step 1, we get the desired inclusion. 

{\bf Step 3:} {\it The object $X\oplus V$ is a weak generator.}

In order to conclude that $X\oplus V$ is a large silting object, by Remark \ref{large silting = large presilting + weak generation} it suffices to show that $X\oplus V$ is a weak generator of $\sT$. We have seen in Step 2 that $M$ lies in $\thick{}{\Add{X\oplus V}}$. Hence, if $Y$ lies in $(X\oplus V)[\bZ]\orth$,  it also lies in $M[\bZ]\orth$. As $M$ is a weak generator for $\sT$, it follows that $Y=0$, as required. 

We conclude that $X \oplus V$ is a large silting object and that $V$ is a complement for $X$. This completes the proof of the equivalence of conditions (1), (2) and (3).

Finally, as shown above, for any large silting $M$ satisfying the equivalent conditions (2) and (3), we can find a complement $V$ of $X$ such that 
$\sV_M[n] \subseteq \sV_{X\oplus V} \subseteq \sV_M,$ and it  follows from Corollary~\ref{intermed} and Theorem \ref{large correspondence} that 
$\thick{}{\Add{M}}= \thick{}{\Add{X\oplus V}}$. 
\end{proof}

\begin{corollary} \label{cor2}
Let $R$ be a coherent ring and let $X$ be a large presilting object in $\sD(R)$. If $X$ is a bounded complex of projective $R$-modules, then it admits a complement which is also a bounded complex of projective $R$-modules.
\end{corollary}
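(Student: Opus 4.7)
The plan is to apply Theorem~\ref{large completion} with the natural choice of large silting object $M := R$, viewed as a stalk complex concentrated in degree $0$ in $\sD(R)$. The corollary then reduces to showing that the three ambient hypotheses of the theorem are met, that condition~(2) holds for this $M$, and that the complement the theorem produces actually lies in $\Kb(\Proj{R})$.

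First I would verify that $\sD(R) = \sD(\Mod{R})$ is algebraic and compactly generated: it is the derived category of a Grothendieck abelian category (hence algebraic via its natural Frobenius enhancement), and since $R$ is coherent, $\sD(R)$ is compactly generated by $R$. Next, $R$ itself is a large silting object in $\sD(R)$, with associated coheart $\Add{R} = \Proj{R}$; this is standard, and is also how the classic silting example $R \in \Kb(\proj{R})$ upgrades to the large setting via Remark~\ref{comparison} and Theorem~\ref{large correspondence}.

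The key step is then to observe that $\thick{}{\Add{R}} = \Kb(\Proj{R})$ inside $\sD(R)$, so the hypothesis that $X$ is a bounded complex of projective $R$-modules is precisely the statement that $X$ lies in $\thick{}{\Add{M}}$. This is exactly condition~(2) of Theorem~\ref{large completion}.

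Applying Theorem~\ref{large completion}, we obtain a complement $V$ with $X \oplus V$ a large silting object, and (crucially) the extra assertion in the last sentence of that theorem guarantees
\[
\thick{}{\Add{X \oplus V}} \;=\; \thick{}{\Add{M}} \;=\; \Kb(\Proj{R}).
\]
In particular $V \in \Kb(\Proj{R})$, i.e.\ $V$ is isomorphic in $\sD(R)$ to a bounded complex of projective $R$-modules, as required. I do not anticipate a serious obstacle: the only slightly delicate point is remembering to invoke the \emph{moreover} clause of Theorem~\ref{large completion} to control the thick closure containing $V$, rather than settling for the mere existence of some complement, which a priori need not be perfect or even bounded.
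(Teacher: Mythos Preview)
Your proposal is correct and follows essentially the same route as the paper: take $M=R$ as the large silting object, identify $\Kb(\Proj{R})$ with $\thick{}{\Add{R}}$ so that condition~(2) of Theorem~\ref{large completion} holds, and then read off both the existence of the complement and its membership in $\Kb(\Proj{R})$ from the theorem (the latter via the final ``moreover'' clause, which you rightly flag). The paper's proof is a one-line version of exactly this argument.
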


\begin{proof}
This follows directly from Theorem \ref{large completion} since $R$ is a large silting object in $\sD(R)$ and $X$ is a bounded complex of projective $R$-modules if and only if $X$ lies in $\thick{}{\Add{R}}$. 
\end{proof}

\begin{remark} \label{small admits large complement}
As a consequence of Corollary~\ref{cor2}, every classic presilting object in $\per{R}$ admits a complement if we extend the ambient category to $\sD(R)$. In other words, classic presilting objects in $\per{R}$ admit complements if we regard them as large presilting objects in $\sD(R)$, cf.~Remark~\ref{comparison}. Moreover, these complements can always be found in $\mathsf{K}^b(\mathsf{Proj}(R))$.
\end{remark}

\subsection{Complements to classic presilting}

To establish a criterion for the existence of complements in the classic setting, we will imitate the strategy of Theorem \ref{large completion}. For this purpose, we need to associate co-t-structures to classic presilting objects. Fortunately, this happens frequently, as shown in the following proposition, which is essentially a reformulation of \cite[Proposition 3.2]{IYa18}.

\begin{proposition}\label{classic to special}
Let $\sT$ be a triangulated category containing a classic silting subcategory. Let $X$ be an object of $\sT$. The following statements hold.
\begin{enumerate}
\item If $\sX=\add{X}$ is precovering in $\sT$, then $\sX$ is a classic presilting subcategory if and only if  $(\sU_\sX,\sV_\sX)$ is a co-t-structure in $\sT$ with coheart $\sX$.
\item If $\sT$ admits set-indexed self-coproducts, then $\sX=\Add{X}$ is a classic presilting subcategory if and only if $(\sU_\sX,\sV_\sX)$ is a co-t-structure in $\sT$ with coheart $\sX$.
\end{enumerate}
\end{proposition}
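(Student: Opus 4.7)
The proposition is essentially a reformulation of \cite[Proposition 3.2]{IYa18}, and my plan follows that template, treating (1) in detail and noting the mild change for (2).

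For the ``if'' direction, suppose $(\sU_\sX,\sV_\sX)$ is a co-t-structure with coheart $\sX$. Then $\sX \subseteq \sU_\sX[1] \cap \sV_\sX$. For any $X,X' \in \sX$ and $n \geq 1$, the objects $X[-1]$ and $X'[n-1]$ sit in $\sU_\sX$ and $\sV_\sX$ respectively (using that $\sU_\sX$ is cosuspended and $\sV_\sX$ is suspended), so $\Hom(X,X'[n]) = \Hom(X[-1],X'[n-1]) = 0$ by the Hom-vanishing of the torsion pair.

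For the ``only if'' direction in (1), assume $\sX = \add{X}$ is classic presilting and precovering. The formal items are straightforward: $\sU_\sX$ is cosuspended (because $\sV_\sX$ is suspended), $\sX \subseteq \sV_\sX$ is the presilting condition, and $\sX \subseteq \sU_\sX[1]$ follows because $X[-1] \in \sX[<0] \subseteq {}\orth\sV_\sX = \sU_\sX$. The technical content is to construct truncation triangles. Given $T \in \sT$, Lemma~\ref{only finite} (which applies since $\sT$ contains a classic silting subcategory) produces $N \geq 0$ with $\Hom(X,T[k]) = 0$ for all $k > N$. Setting $T_0 := T$, I would iterate for $i = 0, \dots, N-1$: at step $i$, take a right $\sX$-approximation $X_i \to T_i[N-i]$ and complete to a triangle
\[
X_i[-(N-i)] \to T_i \to T_{i+1} \to X_i[-(N-i)+1].
\]
A short diagram chase, combining the surjectivity from the approximation property with the presilting vanishing of the outer terms, shows inductively that $\Hom(X,T_{i+1}[k]) = 0$ for $k \geq N-i$; hence $V := T_N$ lies in $\sV_\sX$. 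Splicing the sequence of triangles via the octahedral axiom yields a triangle $U \to T \to V \to U[1]$, and since each $X_i[-(N-i)]$ lies in $\sX[\leq -1] \subseteq \sU_\sX$ (using $\sX[-1] \subseteq \sU_\sX$ and that $\sU_\sX$ is cosuspended) and $\sU_\sX$ is extension-closed, $U \in \sU_\sX$.

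For the coheart identification, given $C \in \sU_\sX[1] \cap \sV_\sX$, take a right $\sX$-approximation $\phi \colon X_C \to C$ and complete to a triangle $C' \to X_C \xrightarrow{\phi} C \to C'[1]$. A long-exact-sequence computation using the approximation surjectivity and the presilting condition shows $C' \in \sV_\sX$; since $C \in \sU_\sX[1]$ we obtain $\Hom(C,C'[1]) = 0$, so $\phi$ splits and $C$ is a direct summand of $X_C$, hence lies in $\add{X} = \sX$. For part (2), the only change is to replace $\add{X}$ by $\Add{X}$: the assumption that $\sT$ admits set-indexed self-coproducts makes $\Add{X}$ automatically precovering via the evaluation map $X^{(\Hom(X,T))} \to T$, and the iterative construction transfers verbatim since coproducts of the approximations remain in $\Add{X}$. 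The main obstacle is guaranteeing that the iterated approximation procedure terminates with $V \in \sV_\sX$; this depends crucially on the uniform bound from Lemma~\ref{only finite}, and without the silting assumption on $\sT$ the Postnikov-like tower would have no a priori reason to stabilise in finitely many steps.
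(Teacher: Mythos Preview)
Your proposal is correct and follows essentially the same approach as the paper: both rely on Lemma~\ref{only finite} (the condition (P2) of \cite{IYa18}) to guarantee that the iterated $\sX$-approximation tower terminates, and both use the universal map $X^{(\Hom(X,T))}\to T$ to supply $\Add{X}$-precovers in part (2). The only difference is cosmetic---the paper invokes \cite[Proposition~3.2]{IYa18} as a black box, whereas you have unpacked that argument explicitly.
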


\begin{proof}
Let  $\sX$ be a classic presilting subcategory in $\sT$. Observe first that assumption (P2) from \cite[p.~7870]{IYa18} holds by Lemma~\ref{only finite} since we assume that $\sX$ is additively generated by a single object. Moreover, by the proof of \cite[Proposition 3.2]{IYa18}, the following conditions are equivalent:
\begin{itemize}
\item the subcategory $\sX$ is precovering in $\sV_\sX$, and assumption (P2) holds,
\item $(\sU_\sX,\sV_\sX) $ is a co-t-structure in $\sT$ with  $\sU_\sX=\cosusp{\sX[-1]}$,
\end{itemize}
and under these conditions $(\sU_\sX,\sV_\sX)$  has coheart $\sX$.  This yields  statement (1).

The proof of statement (2) is analogous once one observes that the existence of self-coproducts in $\sT$ guarantees the existence of $\Add{X}$-precovers. Indeed, for any object $T$ of $\sT$ an $\Add{X}$-precover is given by taking the universal map $\phi\colon X^{(\Hom_{\sT}(X,T))} \too T$. 
\end{proof}

Before stating our criterion for the existence of complements for classic presilting objects we need the following straightforward lemma.

\begin{lemma} \label{summand}
Suppose $M$ is a classic silting object and $X$ is a classic presilting object of $\sT$. Then $X$  lies in $\add{M}$  if and only if $\add{M} \geq \add{X}$ and $\add{X} \geq \add{M}$.
\end{lemma}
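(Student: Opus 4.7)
The forward direction is immediate: if $X$ is a summand of $M^n$, then $\Hom_\sT(M,X[k])$ is a summand of $\Hom_\sT(M,M[k])^n=0$ for every $k>0$ (since $M$ is presilting), giving $\add{M}\geq\add{X}$; the same argument applied with the roles of the two arguments swapped gives $\add{X}\geq\add{M}$.

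For the converse, the plan is to use Lemma~\ref{interm}(b) with $n=1$ to place $X$ in a two-step extension of copies of $M$, and then use the second vanishing hypothesis to split this extension. Concretely, $\add{M}\geq\add{X}$ is part of the hypothesis, and $\add{X}\geq\add{M[1]}$ holds because it amounts to $\Hom_\sT(X,M[k])=0$ for $k\geq 2$, which follows from the hypothesis $\add{X}\geq\add{M}$. Thus Lemma~\ref{interm}(b) yields $X\in\add{M}*\add{M}[1]$, so there is a triangle
\[
M'\too A\too X\rightlabel{\alpha} M'[1]
\]
with $M',A\in\add{M}$ (after rotation of the defining triangle for $X\in\add{M}*\add{M}[1]$).

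The key observation is that $\alpha$ is forced to be zero: since $M'\in\add{M}$, the group $\Hom_\sT(X,M'[1])$ is a summand of some finite power of $\Hom_\sT(X,M[1])$, which vanishes by the hypothesis $\add{X}\geq\add{M}$. Therefore the triangle splits and $A\cong X\oplus M'$, exhibiting $X$ as a summand of an object of $\add{M}$. Hence $X\in\add{M}$, as desired. I expect no serious obstacle here: the only delicate point is ensuring that the application of Lemma~\ref{interm}(b) is legitimate, which just requires the right numerical bookkeeping on the presilting inequalities.
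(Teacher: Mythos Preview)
Your proof is correct and follows essentially the same strategy as the paper: produce a triangle exhibiting $X$ as an extension of objects tied to $\add{M}$, then use $\add{X}\geq\add{M}$ to force the connecting map to vanish and split. The only difference is cosmetic: the paper obtains the triangle directly from a truncation of $X\in\sV_M$ with respect to the shifted co-t-structure $(\sU_M[1],\sV_M[1])$, getting $M_1\to X\to V_1[1]\to M_1[1]$ with $M_1\in\add{M}$ and $V_1\in\sV_M$, whereas you package the same step via Lemma~\ref{interm}(b) with $n=1$. Both routes terminate with the identical splitting argument.
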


\begin{proof}
If $X$ is an object of $\add{M}$ then the relations $\add{M} \geq \add{X}$ and $\add{X} \geq \add{M}$ are clear.
Conversely, suppose $\add{M} \geq \add{X}$ and $\add{X} \geq \add{M}$. Since $\add{M} \geq \add{X}$, we have that $X$ lies in $\sV_M $, and so we can decompose $X$ as 
\[
M_1 \to X \to V_1[1] \to M_1[1]
\] 
with $M_1$ in $\add{M}$ and $V_1$ in $\sV_M$. As $\add{X} \geq \add{M}$, the morphism $X \to V_1[1]$ must be zero. Hence, the triangle splits and $X$ is a direct summand of $M_1$ and thus an object of $\add{M}$.
\end{proof}

\begin{theorem} \label{small completion}
Let $\sT$ be a triangulated category such that $\add{T}$ is precovering for any object $T$ in $\sT$. A classic presilting object $X$ in $\sT$ admits a complement if and only if there is a classic silting object $M$ of $\sT$ such that $\add{M} \geq \add{X}$ and for which the pair
\[
(\sU := {}\orth\sV, \sV := \sV_X\cap\sV_M),
\]
is a co-t-structure. In this case we have that $\add{X}\geq \add{M[n]}$ for some $n \geq 0$. 
\end{theorem}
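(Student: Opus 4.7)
The plan is to adapt the strategy of Theorem \ref{large completion} to the classic setting, with the precovering hypothesis on $\sT$ playing the role that set-indexed coproducts play in the large case.

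For the forward direction, suppose $X$ admits a complement $V$ and set $M := X \oplus V$. Then $\add{M} \geq \add{X}$ is immediate from $M$ being silting, while $X \in \add{M}$ forces $\sV_M \subseteq \sV_X$, so $\sV_X \cap \sV_M = \sV_M$ is already the coaisle of a co-t-structure. The final clause $\add{X} \geq \add{M[n]}$ for some $n \geq 0$ then follows from Lemma \ref{only finite}, which applies because $\sT$ contains a classic silting object.

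For the backward direction, Lemma \ref{only finite} first yields some $n > 0$ with $\add{X} \geq \add{M[n]}$, equivalently $M[n] \in \sV_X$. By Lemma \ref{interm}(a) this gives $\sV_{M[n]} \subseteq \sV_X$, and since $\sV_{M[n]} = \sV_M[n]$, combining with $\add{M} \geq \add{X}$ produces the crucial sandwiching $\sV_M[n] \subseteq \sV \subseteq \sV_M$. I would then take the truncation triangle $U \to M \to V \to U[1]$ for the co-t-structure $(\sU, \sV)$ and claim that $V$ is a complement for $X$. By Theorem \ref{MSSS}, it suffices to show that $(\sU, \sV)$ is bounded with coheart $\sC := \sU[1] \cap \sV$ equal to $\add{X \oplus V}$.

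The identification $\sC = \add{X \oplus V}$ is the main technical step and mirrors Step 1 of Theorem \ref{large completion}. The inclusion $\add{X \oplus V} \subseteq \sC$ is routine: $X$ and $V$ lie in $\sV$ (by the presilting property of $X$ together with $\add{M} \geq \add{X}$, and by the truncation triangle, respectively), while $X$ and $M$ lie in $\sU[1]$ because the presilting property forces $\Hom(X, \sV[1]) = 0 = \Hom(M, \sV[1])$, and then $V \in \sU[1]$ follows from the triangle. For the reverse inclusion, given $C \in \sC$, I would form an $\add{X \oplus V}$-precover $\phi \colon K \to C$ (which exists by the precovering hypothesis), extend to a triangle $K \to C \to L \to K[1]$, and use the precover property together with the $\sV$-preenvelope $M \to V$ of the truncation triangle to deduce $\Hom(X, L) = 0 = \Hom(M, L)$; this forces $L \in \sV[1]$, so $\phi$ is split epi and $C$ lies in $\add{X \oplus V}$. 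This precover argument is where the precovering hypothesis is essential, and is the principal obstacle in the proof.

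Finally, boundedness of $(\sU_M, \sV_M)$ combined with $\sV_M[n] \subseteq \sV \subseteq \sV_M$ immediately yields boundedness of $(\sU, \sV)$, and Corollary \ref{intermed} gives $\thick{}{\sC} = \thick{}{\add{M}} = \sT$. Theorem \ref{MSSS} then identifies the bounded co-t-structure $(\sU, \sV)$ with the classic silting subcategory $\sC = \add{X \oplus V}$, so $X \oplus V$ is classic silting as desired.
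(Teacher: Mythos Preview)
Your proposal is correct and follows essentially the same route as the paper: the forward direction is identical, and for the converse both arguments take the truncation triangle $U \to M \to V \to U[1]$ with respect to $(\sU,\sV)$, identify the coheart as $\add{X \oplus V}$ via the precover argument of Step~1 in Theorem~\ref{large completion}, and use the sandwiching $\sV_M[n] \subseteq \sV \subseteq \sV_M$ together with Corollary~\ref{intermed}. The only cosmetic difference is that you conclude by invoking Theorem~\ref{MSSS} on the bounded co-t-structure $(\sU,\sV)$, whereas the paper verifies directly that $X \oplus V$ is presilting (as an object of the coheart) and that $\thick{}{X \oplus V} = \thick{}{M} = \sT$; these are two phrasings of the same fact.
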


Note that the condition that $\add{T}$ is precovering for any object $T$ is automatically satisfied whenever $\sT$ is a $\kk$-linear, Hom-finite triangulated category over a field $\kk$.

\begin{proof}
If $X$ admits a complement, $V$ say, then it is clear that $M = X \oplus V$ is a classic silting object satisfying the required conditions. 

For the converse implication, suppose that $M$ is a classic silting object $M$ for which $\add{M} \geq \add{X}$ and such that $(\sU,\sV := \sV_X\cap\sV_M)$ is a co-t-structure.
If $\add{X} \geq \add{M}$ then $X \in \add{M}$ by Lemma~\ref{summand} and $M$ is, itself, a complement. 
Therefore, we assume that $\add{X} \ngeq \add{M}$, in which case $M$ does not lie in  $\sV$.
As in the proof of Theorem \ref{large completion}, we find a complement by truncating $M$ with respect to $(\sU,\sV)$ 
\begin{equation} \label{small complement}
\begin{tikzcd}
U \ar[r, "\Phi"] & M \ar[r, "\Psi"] & V \ar[r] & U[1].
\end{tikzcd}
\end{equation}
Arguing as in Step 1 of the proof of Theorem \ref{large completion}, thanks to the assumption that the subcategory $\add{X\oplus V}$ is precovering, we conclude that $\add{X\oplus V}$ is the coheart of $(\sU,\sV)$. In particular, $X\oplus V$ is a classic presilting object.

To see that $X \oplus V$ is a classic silting object, it is enough to check that $\thick{}{M}=\thick{}{X\oplus V}$. 
By Lemma \ref{only finite} there is  $n>0$ such that $\Hom_{\sT}(X,M[{>n}])=0$, and 
we obtain $\add{M}\geq \add{X}\geq \add{M[n]}$. We can now proceed as in Step 2 of  the proof of Theorem~\ref{large completion} to conclude that $\sV_M[n] \subseteq \sV \subseteq \sV_M,$ and  Corollary~\ref{intermed}  yields $\thick{}{M}=\thick{}{X\oplus V}$, as desired.
\end{proof}

The following corollary applies to categories such as $\Kb(\Proj{R})$ or $\Db(\Mod{R})$ which have the property that every object in them admits any set-indexed self-coproduct.

\begin{corollary} \label{cor2small}
Let $\sT$ be a triangulated category  admitting set-indexed self-coproducts. If $X$ is an object such that $\Add{X}$ is a classic presilting subcategory, then there is a classic silting subcategory $\sN$ containing $\Add{X}$ if and only if there is a classic silting subcategory $\sM$ of $\sT$ such that $\sM \geq \Add{X}$ and there is a co-t-structure of the form 
$
(\sU,\sV := \sV_X \cap \sV_{\sM}).
$
\end{corollary}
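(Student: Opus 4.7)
The plan is to mirror the strategy of Theorem~\ref{small completion}, with the role of the general precovering hypothesis taken over by Proposition~\ref{classic to special}(2); that proposition uses the self-coproduct assumption on $\sT$ to supply $\Add{X}$-precovers via the universal maps $X^{(\Hom_{\sT}(X,T))}\too T$. The candidate silting subcategory will be the coheart itself: I set $\sN := \sC$, where $\sC := \sU[1] \cap \sV$ is the coheart of the given co-t-structure $(\sU, \sV)$.

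The forward implication is immediate: if $\sN \supseteq \Add{X}$ is a classic silting subcategory, take $\sM := \sN$. The presilting property of $\sN$ yields $\sM \geq \Add{X}$, and the inclusion $\Add{X} \subseteq \sM$ gives $\sV_\sM \subseteq \sV_X$, so $\sV = \sV_X \cap \sV_\sM = \sV_\sM$ is the coaisle of the bounded co-t-structure attached to $\sM$ by Theorem~\ref{MSSS}. For the converse, by Proposition~\ref{classic to special}(2) there is a co-t-structure $(\sU_X, \sV_X)$ with coheart $\Add{X}$; the inclusion $\sV \subseteq \sV_X$ forces $\sU \supseteq \sU_X$, and hence $\Add{X} \subseteq \sU_X[1] \subseteq \sU[1]$. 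Combined with $\Add{X} \subseteq \sV_X$ (from the presilting property of $\Add{X}$) and $\Add{X} \subseteq \sV_\sM$ (from $\sM \geq \Add{X}$), this gives $\Add{X} \subseteq \sC$. Taking $\sN := \sC$ it then remains to verify that $\sC$ is classic silting, which, since $\sC$ is automatically additive, idempotent-complete, and presilting as the coheart of a co-t-structure, reduces to proving $\thick{}{\sC} = \sT$.

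For this last step I would use Lemma~\ref{only finite} together with boundedness of the co-t-structure $(\sU_\sM, \sV_\sM)$ (provided by Theorem~\ref{MSSS}) to find $n > 0$ with $X \in \sU_\sM[n+1]$. Since $\sV_\sM$ is closed under $[1]$, one has $\sV_\sM[>n] = \sV_\sM[n+1]$, so this is equivalent to $\Hom_\sT(X, \sV_\sM[>n]) = 0$, i.e., $\sV_\sM[n] \subseteq \sV_X$, whence $\sV_\sM[n] \subseteq \sV \subseteq \sV_\sM$. Corollary~\ref{intermed} then delivers $\thick{}{\sC} = \thick{}{\sM} = \sT$, completing the argument. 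The most delicate step is the co-t-structure comparison $(\sU_X, \sV_X) \leftrightarrow (\sU, \sV)$ used to secure $\Add{X}\subseteq\sU[1]$; this is precisely where Proposition~\ref{classic to special}(2) takes over the function that the precovering hypothesis plays in Theorem~\ref{small completion}.
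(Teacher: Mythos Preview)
Your proof is correct and follows essentially the same strategy as the paper. The only difference is cosmetic: the paper, tracking the argument of Theorem~\ref{small completion} more literally, constructs $\sN$ explicitly as $\Add{X \oplus V_M \mid M \in \sM}$ via truncation triangles of each $M \in \sM$ with respect to $(\sU,\sV)$ and then identifies this with the coheart using precovers, whereas you shortcut this by taking $\sN := \sC$ directly and invoking Corollary~\ref{intermed} for thick generation---a mild streamlining that bypasses the Step~1 precovering argument entirely.
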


\begin{proof}
The existence of arbitrary self-coproducts in $\sT$ implies that $\Add{Z}$ is precovering for any object $Z$ in $\sT$. One can now apply the argument of the proof of Theorem~\ref{small completion} 
noting that as $\sM$ is a classic silting subcategory, the classic silting subcategory $\sN$ is constructed as $\Add{X \oplus V_M \mid M \in \sM}$, where $V_M$ is constructed via the truncation triangles \eqref{small complement} as a $\sV$-preenvelope of $M\in \sM$.
Furthermore, the co-t-structure associated to $\sM$ by Theorem~\ref{MSSS} is bounded, so that there exists $n>0$ with $\Hom_{\sT}(X,\sM[>n])=0$ despite $\sM$ being a silting subcategory rather than a silting object.
\end{proof}

In light of the (proofs of) Theorems~\ref{large completion} and \ref{small completion} we make the following definition.

\begin{definition}
Suppose $X$ is a large (resp.~classic) presilting object and $M$ is a large (resp.~classic) silting object. We call the large (resp.~classic) silting object $V \oplus X$ constructed in the proof of Theorem~\ref{large completion} (resp.~Theorem~\ref{small completion}) the \emph{completion of $X$ with respect to $M$}.
\end{definition}

\section{Applications} \label{sec:applications}

In this section we provide two immediate applications of our results  in Section~\ref{sec:complements}, namely, new proofs of the existence of complements for classic presilting objects in  hereditary categories and  of the classic Bongartz completion lemma for `two-term' presilting subcategories.

We recall the following well-known characterisation of split t-structures in terms of their hearts, cf. \cite[Lemma 2.1 \& Theorem 2.3]{CR18} and \cite[Proposition 1]{Kel05}. We include the argument for the convenience of the reader.

\begin{proposition} \label{prop:hereditary}
Let $\sT$ be a triangulated category and let $(\sV,\sW)$ be a bounded t-structure in $\sT$ with heart $\sA$ and associated cohomological functor $H \colon \sT \to \sA$. Write $H^i(X)=H(X[i])$ for any $X$ in $\sT$. The following conditions are equivalent.
\begin{enumerate}[label=(\arabic*)]
\item For all objects $A_1$ and $A_2$ of $\sA$, we have $\Hom_{\sT}(A_1,A_2[2]) = 0$.
\item For each object $T$ of $\sT$, we have $T \cong \bigoplus_{i \in \bZ} H^{i}(T)[-i]$.
\item The t-structure $(\sV,\sW)$ is split.
\end{enumerate}
\end{proposition}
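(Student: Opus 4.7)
The plan is to close the circle $(3) \Leftrightarrow (2) \Rightarrow (1)$ with direct t-structure manipulations and then to tackle the substantive implication $(1) \Rightarrow (2)$ via strong induction on cohomological width.

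\textbf{Easy implications.} For $(2) \Leftrightarrow (3)$, boundedness lets me write every $T$ as an iterated extension of shifts of heart objects, so the stepwise splittings guaranteed by $(3)$ yield $T \cong \bigoplus H^i(T)[-i]$; conversely, such a decomposition makes the canonical truncation triangles coincide with direct-sum triangles, which are tautologically split. For $(2) \Rightarrow (1)$: I would complete $f \in \Hom_\sT(A_1, A_2[2])$ to a triangle $A_1 \xrightarrow{f} A_2[2] \xrightarrow{g} C \to A_1[1]$, compute $H^{-2}(C) = A_2$, $H^{-1}(C) = A_1$ and $H^i(C) = 0$ otherwise from the long exact sequence, and apply $(2)$ to identify $C \cong A_2[2] \oplus A_1[1]$. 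Since $A_2 \in \sV$ and $A_1[-1] \in \sW$, the $A_1[1]$-component of $g$ lies in $\Hom_\sT(A_2[2], A_1[1]) = \Hom_\sT(A_2, A_1[-1]) = 0$, so $g$ is, up to an automorphism of $C$, the canonical inclusion of $A_2[2]$ into $A_2[2] \oplus A_1[1]$, and the exactness of the triangle then forces $f = 0$.

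\textbf{Main implication.} For $(1) \Rightarrow (2)$, I would induct on the cohomological width $w(T) = b - a$, where $[a,b]$ is the convex hull of the support of $H^*(T)$ (finite by boundedness). The cases $w(T) \in \{0,1\}$ are either trivial or handled directly by $(1)$: the unique non-trivial truncation triangle has connecting morphism in $\Hom_\sT(H^{a+1}(T), H^a(T)[2]) = 0$. For $w(T) \geq 2$, the truncation triangle $\tau^{\leq k}T \to T \to \tau^{>k}T \to \tau^{\leq k}T[1]$ at any $k \in [a,b)$ has both pieces of width smaller than $w(T)$, so by the inductive hypothesis they decompose as $\bigoplus H^i(T)[-i]$, and the connecting morphism expands into a matrix of components in $\Hom_\sT(H^i(T), H^j(T)[i-j+1])$ with $i - j + 1 \geq 2$. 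Splitting the triangle thus reduces to the key vanishing
\[
(\star) \qquad \Hom_\sT(A, B[n]) = 0 \quad \text{for all } A, B \in \sA \text{ and } n \geq 2.
\]

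\textbf{The main obstacle} is proving $(\star)$. A naive induction on $n$---take the fiber of $f \colon A \to B[n]$, observe it is a two-cohomology object of width $n-1$, split it by the width induction, and deduce $f = 0$---is circular with the width induction, because splitting width-$m$ objects already requires $(\star)$ for degrees up to $m+1$. The resolution, following \cite{CR18, Kel05}, exploits the natural injection $\Ext^2_{\sA}(A, B) \hookrightarrow \Hom_\sT(A, B[2])$ afforded by the t-structure (where $\Ext^2_\sA$ denotes Yoneda $2$-extensions) to deduce from $(1)$ that $\sA$ has Yoneda global dimension $\leq 1$, and then invokes a realization-functor argument: under hereditariness of the heart the functor $\mathsf{D}^b(\sA) \to \sT$ can be shown fully faithful in all degrees by bootstrapping from degrees $\leq 1$, which transports the vanishing of higher Yoneda $\Ext$s to $(\star)$.
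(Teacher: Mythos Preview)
Your easy implications are correct and close to the paper's; the paper proves $(3)\Rightarrow(1)$ rather than $(2)\Rightarrow(1)$ by observing that the triangle $A_2[1]\to C\to A_1\xrightarrow{f} A_2[2]$ is itself the truncation triangle of $C$ and hence splits under $(3)$ --- marginally more direct than your argument, but equivalent.

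For $(1)\Rightarrow(2)$ the paper runs exactly your width induction, writing only ``$\sA[1]*\sA=\add{\sA[1],\sA}$ \ldots\ and induction shows \ldots''. You are right that the inductive step, as literally stated, needs $(\star)$: the connecting morphism out of a width-$n$ object decomposes into components living in $\Hom_\sT(\sA,\sA[k])$ for $k$ up to $n+1$, and deducing their vanishing from the decomposition itself is circular. The paper does not spell this out.

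Your proposed remedy, however, is where a genuine gap remains. A realization functor $\sD^b(\sA)\to\sT$ is not available for an arbitrary triangulated category --- it requires a filtered or algebraic enhancement --- so invoking one does not establish the proposition in the stated generality. In fact $(\star)$ follows from $(1)$ by an elementary d\'evissage that bypasses decomposition entirely: prove $\Hom_\sT(A,V)=0$ for every $A\in\sA$ and every $V\in\sV[2]$ by induction on the cohomological length of $V$, using the truncation triangle
\[
\tau^{\leq -3}V \longrightarrow V \longrightarrow H^{-2}(V)[2] \longrightarrow (\tau^{\leq -3}V)[1].
\]
Applying $\Hom_\sT(A,-)$, the term $\Hom_\sT(A,H^{-2}(V)[2])$ vanishes by $(1)$ and $\Hom_\sT(A,\tau^{\leq -3}V)$ vanishes by the inductive hypothesis (as $\tau^{\leq -3}V\in\sV[3]\subseteq\sV[2]$ has strictly smaller length), whence $\Hom_\sT(A,V)=0$. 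Specialising to $V=B[n]$ for $n\geq 2$ gives $(\star)$, after which your width induction --- and the paper's --- closes without circularity.
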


\begin{proof}
$(1) \implies (2)$. Observe that $\sA[1] * \sA = \add{\sA[1],\sA}$. The inclusion $\sA[1] * \sA \supseteq \add{\sA[1],\sA}$ is clear, while the inclusion $\sA[1] * \sA \subseteq \add{\sA[1],\sA}$ follows immediately from the condition $\Hom_{\sT}(\sA,\sA[2]) = 0$. Finally, the characterisation of the boundedness of $(\sV,\sW)$ via
\[
\sT = \bigcup_{i \geq j} \sA[i] * \sA[i-1] * \cdots * \sA[j],
\]
and induction shows that 
\[
\sT = \bigcup_{i \geq j} \add{\sA[i], \sA[i-1], \ldots, \sA[j]}, 
\]
from which we see that each object of $\sT$ decomposes into a direct sum of its cohomology with respect to $(\sV,\sW)$.

$(2) \implies (3)$. Write $T \cong \bigoplus_{i \in \bZ} H^{i}(T)[-i]$, the split triangle $\bigoplus_{i < 0} H^i(T)[-i] \to T \to \bigoplus_{i \geq 0} H^i(T)[-i] \to \big(\bigoplus_{i \le 0} H^i(T)[-i] \big)$ gives the truncation triangle for $T$.

$(3) \implies (1)$. Take objects $A_1$, $A_2$ of $\sA$ and extend a morphism $A_1 \to A_2[2]$ to the triangle $A_2[1] \to C \to A_1 \to A_2[2]$. As $A_2[1] \in \sA[1] \subset \sV$ and $A_1 \in \sA \subset \sW$, this triangle is the truncation triangle of $C$ with respect to the split t-structure $(\sV,\sW)$, in which case the third map is zero.
\end{proof}

We will be considering abelian categories $\sA$ containing a projective object $P$ such that every object of $\sA$ is a quotient of an object in $\add{P}$. 
In such categories, it is well known that if $\sA$ has finite global dimension, then the bounded derived category $\Db(\sA)$ is equivalent to the bounded homotopy category $\mathsf{K}^b(\mathsf{proj}(\sA))$ of the additive category $\mathsf{proj}(\sA)=\add{P}$ of projective objects in $\sA$. This means, in particular, that $P$ is a classic silting object in $\Db(\sA) = \Kb(\mathsf{proj}(\sA))$.

The following result intersects non-trivially \cite[Theorem 1.2]{DaFu22}, which was proved using other methods.
Recall that $\sA$ is \emph{hereditary} if $\Ext^2_{\sA}(-,-) = 0$.

\begin{proposition}[Hereditary silting completion] \label{hereditary completion}
Suppose $\sA$ is a hereditary abelian category with a projective object $P$ such that every object in $\sA$ is a quotient of an object in $\add{P}$. Suppose further that $\add{T}$ is precovering for every object $T$ in $\Db(\sA)$.
If $X$ is a classic presilting object in $\Db(\sA)$, then $X$ admits a complement to a classic silting object in $\sD^b(\sA)$.
\end{proposition}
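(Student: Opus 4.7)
The plan is to apply Theorem~\ref{small completion}. Since $\sA$ is hereditary with projective generator $P$, every object of $\sA$ has a projective resolution of length at most one in $\add{P}$, yielding $\Db(\sA) \simeq \Kb(\add{P})$; in particular, $P$ is a classic silting object of $\Db(\sA)$. By Lemma~\ref{only finite}, we may choose $n > 0$ so that $\Hom(P, X[>n]) = 0$, and set $M := P[-n]$; then $M$ is a classic silting object with $\add{M} \geq \add{X}$. To apply Theorem~\ref{small completion}, it remains to verify that the pair $(\sU, \sV) := ({}\orth\sV,\, \sV_X \cap \sV_M)$ is a co-t-structure in $\Db(\sA)$.

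The orthogonality $\Hom(\sU, \sV) = 0$ and the shift-invariance of $\sV$ are immediate, and both $(\sU_X, \sV_X)$ (by Proposition~\ref{classic to special}, using that $\add{X}$ is precovering) and $(\sU_M, \sV_M)$ (by Theorem~\ref{MSSS}) are themselves co-t-structures. The substantive task is to establish the decomposition $\Db(\sA) = \sU * \sV$, i.e.\ to produce truncation triangles with respect to $(\sU, \sV)$. Here the hereditary hypothesis is essential via Proposition~\ref{prop:hereditary}: the standard bounded t-structure on $\Db(\sA)$ is split, so every object decomposes as $T \cong \bigoplus_i H^i(T)[-i]$. This reduces the construction to the indecomposable case $A[-i]$ with $A \in \sA$ and $i \in \bZ$: the $\sV_M$-truncation can be obtained explicitly from the projective resolution of $A$ (since $\sV_M$ is characterised by vanishing of cohomology above degree $n$), and an $\add{X}$-precover of the resulting piece---available by the precovering hypothesis on arbitrary objects---furnishes the $\sV_X$-preenvelope.

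The main difficulty will be verifying that the cone of the composite truncation morphism $T \to V$ lies in $\sU = {}\orth(\sV_X \cap \sV_M)$, rather than merely in ${}\orth\sV_M$ or $\sU_X$ separately. Controlling this requires combining the presilting condition on $X$ with the hereditary vanishing $\Ext_\sA^{\geq 2} = 0$ to exclude residual morphisms that might arise across the two truncation steps. Once the co-t-structure is verified, Theorem~\ref{small completion} immediately produces a classic silting complement for $X$, as desired.
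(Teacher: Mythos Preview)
Your overall strategy via Theorem~\ref{small completion} is correct, but there is a genuine gap, and you have misidentified where the difficulty lies. After truncating $T$ first with respect to $(\sU_M,\sV_M)$ and then with respect to $(\sU_X,\sV_X)$, the octahedral axiom gives a triangle
\[
U \too T \too V_X \too U[1]
\]
with $U \in \sU_M * \sU_X$. Since $\sU_M, \sU_X \subseteq {}\orth(\sV_M \cap \sV_X)=\sU$ and $\sU$ is extension-closed, the cone $U$ \emph{automatically} lies in $\sU$; this is not the obstruction. The actual problem is that the target $V_X$ lies in $\sV_X$ but need not lie in $\sV_M$, so the triangle above is not yet a truncation triangle for $(\sU,\sV)$. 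Your use of the hereditary hypothesis to decompose $T \cong \bigoplus_i H^i(T)[-i]$ at the outset does not repair this: even for $T = A[-i]$ with $A \in \sA$, the $\sV_X$-truncation of $V_M$ may acquire cohomology outside the range allowed by $\sV_M$.

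The paper deploys hereditariness at a different point, and this is exactly the missing idea. After obtaining $V_X$, one truncates a \emph{third} time, now with respect to the standard t-structure $(\sV_P,\sW_P)$ (equivalently its shift $(\sV_M,\sW_M)$). By Proposition~\ref{prop:hereditary} this t-structure is split, so the truncation exhibits a direct summand $\widetilde{V}_P$ of $V_X$ lying in $\sV_P$; being a summand of $V_X \in \sV_X$, it also lies in $\sV_X$, hence in $\sV_P \cap \sV_X$. Because $\widetilde{V}_P \to V_X$ is a $\sV_P$-precover and $V_M \in \sV_P$, the map $V_M \to V_X$ factors through $\widetilde{V}_P$, and two applications of the octahedral axiom produce the desired truncation triangle with target $\widetilde{V}_P \in \sV$. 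Your sketch gestures at ``hereditary vanishing'' but never arrives at this third truncation, which is the crux of the argument.
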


\begin{proof}
Under the hypotheses of the proposition, it is clear that $P$ is a classic silting object in $\Db(\sA)$, and it is well known that the associated torsion pairs $(\sV_P,\sW_P)$ and $(\sU_P,\sV_P)$ in $\Db(\sA)=\thick{}{P}=\mathsf{K}^b(\add{P})$ are, respectively, the standard t-structure and its left adjacent co-t-structure. Note that the corresponding truncation triangles are given by the so-called \textit{smart} and \textit{stupid} truncations. 
Consider now the co-t-structure $(\sU_X,\sV_X)$ associated to the classic presilting object $X$ (see Proposition \ref{classic to special}(1)).
We will show that $(\add{\sU_P * \sU_X},\sV_P \cap \sV_X)$ is a co-t-structure in $\Db(\sA)$. Closure under shifts and Hom-orthogonality are clear. Closure under summands is given by definition for $\add{\sU_P * \sU_X}$ and is clear for $\sV_P \cap \sV_X$ because it is defined by an orthogonality condition. It remains to obtain the decomposition triangle. Let $D$ be an object of $\Db(\sA)$, consider the truncation triangle with respect to $(\sU_P, \sV_P)$,
\[
U_P \too D \rightlabel{f}  V_P \too U_P[1].
\]
Now we truncate $V_P$ with respect to $(\sU_X,\sV_X)$:
\[
U_X \too V_P \rightlabel{g}  V_X \too U_X[1].
\]
Finally, we truncate $V_X$ with respect to the standard t-structure $(\sV_P,\sW_P)$: 
\[
\widetilde{V}_P \rightlabel{h} V_X \too W_P \rightlabel{0} \widetilde{V}_P[1],
\] 
where $\widetilde{V}_P$ lies in $\sV_P$, $W_P$ lies in $\sW_P$ and the third morphism is $0$ by Proposition~\ref{prop:hereditary} since $\sA$ is hereditary and $\Hom_{\Db(\sA)}(A_1, A_2[2]) = \Hom_{\sA}(A_1,A_2[2])$ for objects $A_1$ and $A_2$ of $\sA$. In particular, $\widetilde{V}_P$ is a direct summand of $V_X$ and therefore lies in $\sV_X$ and hence lies in $\sV_P \cap \sV_X$. 
As $(\sV_P, \sW_P)$ is a t-structure, $h \colon \widetilde{V}_P \to V_X$ is a $\sV_P$-precover. Thus, there exists $\tilde{g} \colon V_P \to \widetilde{V}_P$ such that $g = h\tilde{g}$.
Applying the octahedral axiom to the composition $g = h\tilde{g}$ gives the following commutative diagram.
\[
\begin{tikzcd}[sep=small]
                                                      & W_P[-1] \ar[d, swap, "0"] \ar[r,equals] & W_P[-1] \ar[d, "0"]  & \\
V_P \ar[r, "\tilde{g}"] \ar[d,equals] & \widetilde{V}_P \ar[r] \ar[d,swap, "h"]   & C[1] \ar[r] \ar[d]       & V_P[1]  \ar[d,equals] \\
V_P \ar[r, swap, "g"]                      & V_X \ar[r] \ar[d]                                    & U_X[1] \ar[r]  \ar[d]  & V_P[1] \\
                                                      & W_P \ar[r, equals]                               & W_P                        & 
\end{tikzcd}
\]
The split triangle forming the third column shows that $C$ is a direct summand of $U_X$ and thus lies in $\sU_X$. 
Now applying the octahedral axiom to the composition $\tilde{g}f$ gives the following commutative diagram.
\[
\begin{tikzcd}[sep=small]
                                          & C \ar[d] \ar[r,equals]                      & C \ar[d]                    & \\
D \ar[r, "f"] \ar[d,equals]     & V_P \ar[r] \ar[d,swap, "\tilde{g}"]   & U_P[1] \ar[r] \ar[d]   & D[1]  \ar[d,equals] \\
D \ar[r, swap, "\tilde{g}f"]   & \widetilde{V}_P \ar[r] \ar[d]            & U[1] \ar[r]  \ar[d]       &  D[1] \\
                                          & C[1] \ar[r, equals]                           & C[1]                         & 
\end{tikzcd}
\]
Observe that $U \in \sU_P * \sU_X$, and hence 
\[
U \too D \rightlabel{\tilde{g}f} \widetilde{V}_P \too U[1]
\]
is a truncation triangle showing that $(\add{\sU_P * \sU_X},\sV_P \cap \sV_X)$ is a co-t-structure in $\Db(\sA)$ and the result follows by Theorem~\ref{small completion}.
\end{proof}

\begin{remark}
Note that if $\sA$ is a cocomplete hereditary abelian category  with a projective generator $P$, then $P$ is a silting object in $\sD(\sA)$ and $\Db(\sA)\cong \Kb(\mathsf{Proj}(\sA))$ (see, for example, \cite[\S4]{PV18}). Thus, if $\sD(\sA)$ is a compactly generated triangulated category, then a large presilting complex $X$ in $\Db(\sA)$ admits a complement to a large silting object in $\sD(\sA)$ following Theorem \ref{large completion} (just as argued in Corollary \ref{cor2}). Note, furthermore, that the complement found using Theorem \ref{large completion} is an object in $\Db(\sA)$.
\end{remark}

Next, we recover the classic Bongartz completion lemma for two-term classic presilting objects, see \cite[\S 5]{DeFe15}, \cite[Proposition 6.1]{Wei13}, \cite[Proposition 3.14]{BY}.

\begin{proposition}[Bongartz completion] \label{bongartz}
Let $\sT$ be a triangulated category such that $\add{T}$ is precovering  for any object $T$ in $\sT$ (for example,  a $\kk$-linear, Hom-finite triangulated category over a field $\kk$). Let $M$ be a classic silting object and $X$  a classic presilting object in $\sT$ which is two-term with respect to $M$, i.e.
\[
\add{M}\ge\add{X}\ge\add{M[1]}.
\]
Then $\sV_M \cap \sV_X$ is the coaisle of a co-t-structure. In particular, there is $V$ in $\sT$ such that $X\oplus V$ is a classic silting object with $\sV_{X\oplus V} = \sV_M \cap \sV_X$.
\end{proposition}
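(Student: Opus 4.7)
The strategy is to reduce to Theorem~\ref{small completion} by establishing that $(\sU, \sV)$, with $\sV := \sV_M \cap \sV_X$ and $\sU := {}\orth\sV$, is a co-t-structure in $\sT$. Hom-orthogonality holds by construction, and $\sV[1] \subseteq \sV$ because both $\sV_M$ and $\sV_X$ are closed under suspension. The essential task is thus to construct, for each $T \in \sT$, a truncation triangle $U \to T \to V \to U[1]$ with $U \in \sU$ and $V \in \sV$; Theorem~\ref{small completion} then yields the silting complement satisfying $\sV_{X \oplus V} = \sV_M \cap \sV_X$.

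Given $T$, I first truncate with respect to the co-t-structure $(\sU_M, \sV_M)$ provided by Theorem~\ref{MSSS}, obtaining $U_M \to T \to V_M \to U_M[1]$. Since $\add{X}$ is precovering by hypothesis, I take an $\add{X}$-precover $\phi \colon X'_T \to V_M[1]$ and complete it to a triangle
\[
V_M \to V \to X'_T \xrightarrow{\phi} V_M[1].
\]
The desired truncation of $T$ is obtained by composing $T \to V_M \to V$ and extending to a triangle $U \to T \to V \to U[1]$. To verify $V \in \sV_M$: both $V_M$ and $X'_T \in \add{X} \subseteq \sV_M$ (the latter by Lemma~\ref{interm}(a) and $\add{M} \geq \add{X}$) lie in the extension-closed $\sV_M$. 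For $V \in \sV_X$, I apply $\Hom_\sT(X, -)$ to the defining triangle: the case $k = 1$ combines surjectivity of $\Hom_\sT(X, \phi)$ (the precover property) with $\Hom_\sT(X, X'_T[1]) = 0$ (presilting); for $k \geq 2$ the long exact sequence collapses to $\Hom_\sT(X, V[k]) \cong \Hom_\sT(X, V_M[k])$. Invoking the two-term decomposition $X \in \add{M} * \add{M[1]}$ from Lemma~\ref{interm}(b), the associated triangle $M_1 \to M_0 \to X \to M_1[1]$ with $M_0, M_1 \in \add{M}$ combined with $\Hom_\sT(M, V_M[>0]) = 0$ (from $V_M \in \sV_M$) forces $\Hom_\sT(X, V_M[k]) = 0$.

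For $U \in \sU$, the octahedral axiom applied to the composition $T \to V_M \to V$ yields a triangle $U_M \to U \to X'_T[-1] \to U_M[1]$. The containment $\sV \subseteq \sV_M$ gives $\sU_M \subseteq \sU$, while $\sV \subseteq \sV_X$ together with $X'_T \in \add{X}$ places $X'_T[-1]$ in $\sU$, since $\Hom_\sT(X'_T[-1], -) = \Hom_\sT(X'_T, -[1])$ vanishes on $\sV_X$. Extension-closure of $\sU$ then yields $U \in \sU$, completing the construction. The main obstacle is the $k \geq 2$ vanishing of $\Hom_\sT(X, V[k])$: it is precisely here that the two-term hypothesis is indispensable, as it reduces the vanishing to the trivial $\Hom_\sT(M, V_M[>0]) = 0$; without it, the Bongartz candidate $V$ would fail to lie in $\sV_X$.
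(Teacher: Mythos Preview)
Your proof is correct and follows essentially the same route as the paper's: truncate $T$ with respect to $(\sU_M,\sV_M)$, then produce the object $V$ from $V_M$ via an $\add{X}$-approximation, and use the two-term hypothesis to show $V \in \sV_M \cap \sV_X$. The only cosmetic difference is that the paper phrases the second step as truncating $V_M$ with respect to the co-t-structure $(\sU_X,\sV_X)$ (supplied by Proposition~\ref{classic to special}) and then observes, from $V_M[1] \in \sV_X$, that the aisle term $U_X$ already lies in $\add{X[-1]}$; you bypass Proposition~\ref{classic to special} entirely by taking a single $\add{X}$-precover of $V_M[1]$ and verifying the required vanishing by hand. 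Your direct verification that $\Hom_\sT(X,V_M[k]) = 0$ for $k \ge 2$ via the triangle $M_1 \to M_0 \to X \to M_1[1]$ is exactly the content of the paper's appeal to Lemma~\ref{interm}(a), so the two arguments coincide in substance.
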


\begin{proof}
We show that $(\add{\sU_M * \sU_X}, \sV_M \cap \sV_X)$ is a co-t-structure in $\sT$. Closure under shifts, closure under summands and Hom-orthogonality are clear. It remains to obtain a decomposition triangle for each object $T$ in $\sT$. 
We truncate first with respect to $(\sU_M, \sV_M)$ and then with respect to $(\sU_X, \sV_X)$, which is a co-t-structure with coheart $\add{X}$ by Proposition~\ref{classic to special}. We obtain triangles
\[
U_M \too T \rightlabel{f} V_M \too  U_M[1]
\quad \text{and} \quad
U_X \too V_M \rightlabel{g} V_X \too U_X[1],
\]
with $U_M$ in $\sU_M$, $V_M$ in $\sV_M$, $U_X$ in $\sU_X$ and $V_X$ in $\sV_X$. 
By assumption and Lemma~\ref{interm}(a), we see that $V_M[1]$ lies in $\sV_X$, and so does $U_X[1]$. 
Thus $U_X[1]$ lies in $\sC_X = \add{X} \subseteq \sV_M$.
Hence, $V_X$ lies in $\sV_M \cap \sV_X$. Using the octahedral axiom, we get a triangle
\[
U \too T \rightlabel{gf} V_X \too U[1],
\]
with $U$ lying in $\sU_M * \sU_X$, and we conclude that $(\add{\sU_M * \sU_X}, \sV_M \cap \sV_X)$ is a co-t-structure. The existence of a complement now follows from Theorem~\ref{small completion}.
\end{proof}

\begin{remark}
In each of Propositions~\ref{hereditary completion} and \ref{bongartz}, we obtain a truncation triangle for the co-t-structure $(\add{\sU_M * \sU_X}, \sV_M \cap \sV_X)$ by first truncating an object of $\sT$ with respect to $(\sU_M, \sV_M)$ and then truncating the resulting object of $\sV_M$ with respect to $(\sU_X, \sV_X)$. One then observes, using two different arguments, that the object of $\sV_X$ resulting from the second truncation is also an object of $\sV_M$.
This is an example in which the naive truncation algorithm of \cite[\S 2]{BPP13} terminates after two steps; see also \cite{Bondal13}.
It would be interesting to find conditions under which the naive or refined truncation algorithm (\cite[\S 3]{BPP13}) terminates after finitely many steps, e.g. \cite[Theorem 6.1]{BPP13}.
\end{remark}

The (complete) silting object with respect to which the complement is taken matters.
Attempting to complete a given presilting object with respect to different (complete) silting objects may yield different completions, or even a completion in one case and no completion in the other. The examples below exhibit both of these kinds of behaviour.

\begin{example}
Let $Q$ be the $A_3$ quiver below and let $\kk Q$ be its path algebra.
\[
\begin{tikzcd}[sep=small]
1 \ar[r]  & 2 \ar[r]  & 3
\end{tikzcd}
\]
Let $X = S_1$, which is a classic presilting object in $\Db(\kk Q)$. We consider completions of $X$ with respect to two different classic silting objects in $\Db(\kk Q)$. Consider the classic silting object $M = P_1 \oplus P_2 \oplus P_3$. Then applying Theorem~\ref{small completion}, we obtain the completion $V \oplus X = P_1 \oplus P_3 \oplus S_1$ from the decomposition triangle 
\[
S_1[-1] \to P_3 \oplus P_2 \oplus P_3 \to P_3 \oplus P_1 \oplus P_1 \to S_1.
\]
Now consider the classic silting object $N = P_2 \oplus P_1 \oplus S_2$. In this case, applying Theorem~\ref{small completion} gives us the completion $V' \oplus X = P_1 \oplus I_2 \oplus S_1$ from the decomposition triangle
\[
S_1[-1] \oplus S_1[-1] \to P_2 \oplus P_1 \oplus S_2 \to P_1 \oplus P_1 \oplus I_2 \to S_1 \oplus S_1.
\]
Note that the complement need not be basic; in each case, we have given the basic version.
\end{example}

The following example is based on the method in \cite{BPP13}.

\begin{example} \label{A2}
Let $Q$ be the $\widetilde{A}_2$ quiver below and let $\kk Q$ be its path algebra.
\[
\begin{tikzcd}[sep=small]
                       & 2 \ar[dr] & \\
1 \ar[rr] \ar[ur] &              & 3
\end{tikzcd}
\]
The object $P_1 \oplus P_3 \oplus \tau S_2$ is a tilting object in $\Db(\kk Q)$, from which we deduce that $M \coloneqq P_1 \oplus P_3 \oplus \tau S_2[1]$ is a classic silting object in $\Db(\kk Q)$.
Let $X \coloneqq S_2 [2]$. Since $S_2$ is rigid, $X$ is a classic presilting object in $\Db(\kk Q)$ which lies in $\susp M = \sV_M$ and $M[2] \in \sV_X$. By Proposition~\ref{classic to special},  $(\cosusp X[-1], \sV_X)$ is a co-t-structure in $\Db(\kk Q)$. Consider
\[
\sV \coloneqq \sV_X \cap \sV_M  = \sV_{X \oplus M}.
\]
The suspended subcategory $\sV$ is not preenveloping in $\Db(\kk Q)$ and therefore it is not the co-aisle of a co-t-structure, see Figure~\ref{A2-example} on page~\pageref{A2-example} for an illustration. 
Hence, $X$ cannot be completed \emph{with respect to} $M$. However, $X$ can be completed with respect to $N \coloneqq (P_1 \oplus P_2 \oplus P_3)[1]$ because $X$ is two-term with respect to $N$ (see Proposition \ref{bongartz}).
\end{example}

\section{Silting-discrete finite-dimensional algebras} \label{sec:silting-discrete}

In this section, $\Lambda$ will be a finite-dimensional algebra over a field $\kk$.

\subsection{Classic silting objects versus large silting objects}
We have seen in Remark~\ref{comparison} that an object 
 $M$ in $\per{\Lambda}$ is a classic silting object in $\per{\Lambda}$ if and only if it is a large silting object in $\sD(\Lambda)$. Consider the following  pairs
 \[
(\sV_M, \sW_M) 
=\big( (M[<0])\orth, (M[\geq 0])\orth \big)
\text{ and }
(\sv_M, \sw_M)  \coloneqq \big( \sV_M \cap \Db(\Lambda), \sW_M \cap \Db(\Lambda) \big),
\]
where the orthogonals are taken inside $\sD(\Lambda)$.

\begin{proposition}[{\cite[Theorem 1.3]{HKM02}, \cite[Corollary 2]{NSZ19}, \cite[Proposition 4.3]{PV18}, \cite[Proposition 5.4]{KY14}}]\label{Koenig-Yang}
Let $M \in \per{\Lambda}$ be a classic silting object. 
\begin{enumerate}
\item  The pair of subcategories $(\sV_M, \sW_M)$ is a t-structure in $\sD(\Lambda)$;
\item  \label{projective-generator} The cohomological functor $H^0_M \colon \sD(\Lambda)\to \sH_M$ 
associated to the t-structure $(\sV_M,\sW_M)$ induces an equivalence 
\[
H^0_M|_{\Add{M}} \colon \Add{M} \to \Proj{\sH_M}.
\]
In particular, $H^0(M)$ is a small projective generator of $\sH_M$, and the heart $\sH_M$ is equivalent to $\Mod{\End(H^0(M))}$. 
\item  The pair $(\sv_M,\sw_M)$ is a t-structure in $\Db(\Lambda)$ whose heart $\sh_M$ is equivalent to $\mod{\End(M)}$.
\end{enumerate}
\end{proposition}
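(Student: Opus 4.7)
The plan is to derive the three statements from the large silting machinery already established in the paper, combined with classical projective-generator arguments of Mitchell/Gabriel--Popescu type. Throughout, we use that $\sD(\Lambda)$ is algebraic and compactly generated (since $\Lambda$ is a finite-dimensional $\kk$-algebra) and that, by Remark~\ref{comparison}, the object $M \in \per{\Lambda}$ is simultaneously a classic silting object in $\per{\Lambda}$ and a large silting object in $\sD(\Lambda)$.

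For part (1), I would invoke Theorem~\ref{large correspondence} directly: the large silting object $M$ produces a co-t-structure $(\sU_M, \sV_M)$ with a right adjacent t-structure. Unpacking the right adjacency and the formula $\sW_M = (M[\geq 0])\orth$, this right adjacent t-structure is precisely $(\sV_M, \sW_M)$. No further work is needed here beyond checking that the notation matches.

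Part (2) is the main content. I would first verify that, for every $A \in \sH_M$ and every $i \geq 0$, the truncation triangles for $(\sV_M, \sW_M)$ yield a natural isomorphism
\[
\Hom_{\sH_M}(H^0_M(M), A[i]_{\sH_M}) \;\cong\; \Hom_{\sD(\Lambda)}(M, A[i]),
\]
which vanishes for $i > 0$ because $M$ lies in the coheart $\Add{M}$ of the co-t-structure $(\sU_M, \sV_M)$ and $A \in \sW_M \subseteq (M[>0])\orth$ (and symmetrically for negative shifts inside the heart via $\sV_M[1]$). This shows that $H^0_M(M)$ is projective in $\sH_M$. To see it is a generator, I would take any $A \in \sH_M$ and consider the canonical map $M^{(\Hom(M,A))} \to A$ in $\sD(\Lambda)$; since $\sV_M = \Susp(M)$ (Theorem~\ref{large correspondence}) and $A \in \sV_M$, the cone of this map, once truncated into $\sH_M$, must vanish --- this is the technical step that requires a little care, using that $M$ weakly generates $\sV_M$ and that truncation sends a surjection up to the heart to a surjection in $\sH_M$. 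To upgrade to an equivalence $\Add{M} \to \Proj{\sH_M}$, I would first establish full faithfulness from the Hom-isomorphism above, then use that every projective in $\sH_M$ is a summand of a coproduct of copies of $H^0_M(M)$ (standard for projective generators), and finally invoke the Mitchell/Gabriel--Popescu equivalence $\sH_M \simeq \Mod{\End_{\sH_M}(H^0_M(M))} \simeq \Mod{\End(H^0(M))}$; the endomorphism identification again comes from the Hom-isomorphism. The main obstacle here is verifying that the coproduct in $\sH_M$ of copies of $H^0_M(M)$ is computed via $H^0_M$ applied to the coproduct in $\sD(\Lambda)$, which uses that $\sV_M$ is closed under coproducts (part of the presilting condition) and hence so is $\sH_M$ inside $\sW_M[1]$.

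For part (3), I would restrict the t-structure $(\sV_M, \sW_M)$ to $\Db(\Lambda)$. Since $M \in \per{\Lambda} \subseteq \Db(\Lambda)$ and the truncation with respect to the co-t-structure $(\sU_M, \sV_M)$ preserves boundedness of cohomology (each step introduces a shift of an object of $\Add{M}$, but for objects of $\Db(\Lambda)$ the Postnikov tower of Lemma~\ref{lem:Postnikov} is essentially finite by Lemma~\ref{only finite} applied inside $\per{\Lambda}$), the pair $(\sv_M, \sw_M)$ is indeed a t-structure on $\Db(\Lambda)$ with heart $\sh_M = \sH_M \cap \Db(\Lambda)$. To identify $\sh_M$ with $\mod{\End(M)}$, I would note that $\End(M)$ is finite-dimensional (as $M \in \per{\Lambda}$ and $\Lambda$ is finite-dimensional over $\kk$), so $\End(M) = \End(H^0(M))$ is finite-dimensional; under the equivalence $\sH_M \simeq \Mod{\End(M)}$ from part (2), the objects of $\sh_M$ correspond precisely to the finitely presented modules, using that $\sh_M$ is the subcategory of Noetherian/bounded objects of $\sH_M$ and that finitely presented equals finitely generated over a finite-dimensional algebra. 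This final identification is routine once (2) is in hand.
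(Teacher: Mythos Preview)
The paper does not supply its own proof of this proposition: it is stated with attributions to \cite{HKM02,NSZ19,PV18,KY14} and used as a black box thereafter. So there is no argument in the paper to compare your proposal against.

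That said, your sketch is broadly in line with how one would assemble a proof from the tools already in the paper. Part (1) is in fact immediate from Definition~\ref{def:large-silting} once you invoke Remark~\ref{comparison}; appealing to Theorem~\ref{large correspondence} is unnecessary. Your strategy for (2) --- show $H^0_M(M)$ is a small projective generator via the natural isomorphism $\Hom_{\sH_M}(H^0_M(M),A)\cong\Hom_{\sD(\Lambda)}(M,A)$ for $A\in\sH_M$, then apply Mitchell/Morita --- is the standard one and essentially what the cited references do; your remark about ``$A[i]_{\sH_M}$'' is a little garbled (shifts leave the heart), but what you need is simply that $\Hom_{\sD(\Lambda)}(M,-)$ is exact on short exact sequences in $\sH_M$, which follows from $\sH_M\subseteq\sW_M$.

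The one genuine slip is in part (3): you argue that the \emph{co-t-structure} truncations preserve $\Db(\Lambda)$, but what you must show is that the \emph{t-structure} truncations for $(\sV_M,\sW_M)$ preserve $\Db(\Lambda)$. These are different operations, and Lemma~\ref{lem:Postnikov} does not help directly. The correct (and easier) argument is that, since $M\in\per{\Lambda}$, there exist integers $a\leq b$ with $\sD^{\leq a}\subseteq\sV_M\subseteq\sD^{\leq b}$ for the standard t-structure, whence the $(\sV_M,\sW_M)$-truncations of any object of $\Db(\Lambda)$ again have bounded finitely generated cohomology. Once the restriction is established, your identification $\sh_M\simeq\mod{\End(M)}$ via (2) and finite-dimensionality of $\End(M)$ is fine.
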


We denote the class of large silting objects in $\sD(\Lambda)$, up to equivalence, by $\SiltLambda$. Similarly, the class of classic silting objects in $\per{\Lambda}$, up to equivalence, is denoted by $\siltLambda$. As discussed in Remark \ref{comparison}, there is an embedding of $\siltLambda$ into $\SiltLambda$ and, by abuse of notation, we shall write $\siltLambda \subseteq \SiltLambda$.
For a classic silting object $M$ in $\per{\Lambda}$ and $n \ge 1$, the partial order in Definition~\ref{partial-order} defines the following subclasses of $\SiltLambda$:
\begin{align*}
\SiltL{n+1}{M} & \coloneqq \{ N \in \SiltLambda \mid \Add{M} \geq \Add{N} \geq \Add{M[n]}\}; \\
\siltL{n+1}{M} & \coloneqq \SiltL{n+1}{M} \cap \siltLambda = \{ N \in \siltLambda \mid \add{M} \geq \add{N} \geq \add{M[n]}\}.
\end{align*}

\subsection{Silting modules, $\tau$-tilting finiteness and silting discreteness}

Silting modules were introduced in \cite{AMV16} as infinite-dimensional analogues of support $\tau$-tilting modules.  For the original definition of support $\tau$-tilting module we refer to \cite{AIR14}.

\begin{definition}[{\cite[Definition 3.7]{AMV16}}]
A $\Lambda$-module $M$ is a \emph{silting module} if there is an exact sequence
\[
\begin{tikzcd}
P \ar[r, "\sigma"] & Q \ar[r] & M \ar[r] & 0
\end{tikzcd}
\]
with $P$ and $Q$ projective $\Lambda$-modules such that the class 
\[
\cD_\sigma \coloneqq \{X \in \Mod{\Lambda} \mid \Hom_R(\sigma,X) \text{\ is\ an\ epimorphism}\}
\]
coincides with  the class $\Gen{M}$ of modules which are epimorphic images of coproducts of $M$. 
Two silting modules $M$ and $N$ are said to be \emph{equivalent} if $\Add{M}=\Add{N}$.
\end{definition}

Over a finite-dimensional algebra, a module is  support $\tau$-tilting  if and only if it a finite-dimensional silting module \cite[Proposition 3.15]{AMV16}. A finite-dimensional algebra is \emph{$\tau$-tilting-finite} if it has only finitely many support $\tau$-tilting modules up to equivalence \cite{DIJ19}. It turns out that these are precisely the algebras whose silting modules coincide, up to equivalence, with the support $\tau$-tilting modules.

\begin{theorem}[{\cite[Theorem 4.8]{AMV19}}] \label{taufinite}
The following are equivalent for a finite-dimensional $\kk$-algebra $\Lambda$.
\begin{enumerate}
\item $\Lambda$ is $\tau$-tilting-finite. 
\item Every silting $\Lambda$-module is finite dimensional up to equivalence.
\item Every torsion pair in $\mathsf{Mod}(\Lambda)$ is of the form $\mathsf{Gen}(T)$ for a finite-dimensional silting module $T$.
\end{enumerate}
\end{theorem}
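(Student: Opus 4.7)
The plan is to prove the cycle of implications $(3) \Rightarrow (2) \Rightarrow (1) \Rightarrow (3)$. The principal tools are: the bijection $M \mapsto \Gen{M}$ between equivalence classes of silting modules and certain torsion pairs in $\Mod{\Lambda}$ from \cite{AMV16}; the identification (already cited as Proposition 3.15 of \cite{AMV16}) of finite-dimensional silting modules with support $\tau$-tilting modules; and the Demonet--Iyama--Jasso characterisation \cite{DIJ19} of $\tau$-tilting finiteness as the condition that every torsion class in $\mod{\Lambda}$ is functorially finite.

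The implication $(3) \Rightarrow (2)$ is immediate: any silting module $M$ gives a torsion pair in $\Mod{\Lambda}$ with torsion class $\Gen{M}$; by $(3)$, this torsion class equals $\Gen{T}$ for a finite-dimensional silting module $T$, and since the equivalence class of a silting module is determined by its generated torsion class, $M$ is equivalent to $T$, hence finite dimensional up to equivalence.

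For $(1) \Rightarrow (3)$, given a torsion pair $(\cT, \cF)$ in $\Mod{\Lambda}$, the restriction $\cT \cap \mod{\Lambda}$ is a torsion class in $\mod{\Lambda}$, which by $\tau$-tilting finiteness and \cite{DIJ19} is functorially finite and equal to $\Gen{T} \cap \mod{\Lambda}$ for some support $\tau$-tilting $T$. The inclusion $\Gen{T} \subseteq \cT$ is clear from $T \in \cT$; for the reverse, I would show that under $\tau$-tilting finiteness any torsion class in $\Mod{\Lambda}$ is the direct-limit closure of its finitely generated part, so that $\cT$ is determined by $\cT \cap \mod{\Lambda}$ and thus $\cT = \Gen{T}$. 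For $(2) \Rightarrow (1)$, I would argue by contrapositive: if $\Lambda$ is not $\tau$-tilting finite, \cite{DIJ19} produces a torsion class $\cT_0$ in $\mod{\Lambda}$ which is not functorially finite, and hence not of the form $\Gen{T} \cap \mod{\Lambda}$ for any support $\tau$-tilting $T$; extending $\cT_0$ to a torsion pair in $\Mod{\Lambda}$ and invoking the silting-module existence results of \cite{AMV16,AMV20}, one produces a silting module $M$ realising $\cT_0$ as its finite shadow, which then cannot be equivalent to a finite-dimensional silting module, contradicting $(2)$.

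The main obstacle, shared by the two non-trivial directions, is the interface between $\mod{\Lambda}$ and $\Mod{\Lambda}$. In $(1) \Rightarrow (3)$ one must verify that a torsion pair in $\Mod{\Lambda}$ is recovered from its restriction to $\mod{\Lambda}$ under the hypothesis of $\tau$-tilting finiteness, while in $(2) \Rightarrow (1)$ one must realise a prescribed non-functorially-finite torsion class as arising from an (infinite-dimensional) silting module. Both steps hinge on the theory of definable silting torsion pairs developed in \cite{AMV16,AMV20}, and in particular on knowing that, in the $\tau$-tilting finite case, the bijection of \cite{AMV16} specialises to an exhaustive correspondence with all torsion pairs in $\Mod{\Lambda}$.
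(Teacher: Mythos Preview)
The paper does not prove this statement; Theorem~\ref{taufinite} is quoted from \cite[Theorem 4.8]{AMV19} and used as a black box in Section~\ref{sec:silting-discrete}. There is no proof in the present paper to compare your proposal against.

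On its own merits, your outline has a genuine gap in $(2)\Rightarrow(1)$. You take a non-functorially-finite torsion class $\cT_0$ in $\mod{\Lambda}$, extend it to a torsion pair in $\Mod{\Lambda}$, and then ``invoke the silting-module existence results of \cite{AMV16,AMV20}'' to produce a silting module whose finite shadow is $\cT_0$. But those references do \emph{not} assert that every torsion class in $\Mod{\Lambda}$ is of the form $\Gen{M}$ for a silting module $M$; the torsion classes arising from silting modules form a proper subclass in general, and an arbitrary extension of $\cT_0$ has no reason to belong to it. So the step ``one produces a silting module $M$ realising $\cT_0$'' is unjustified as written. The argument in \cite{AMV19} circumvents this by passing to the cosilting side, where one can show that \emph{every} torsion class in $\mod{\Lambda}$ lifts (via $\varinjlim$) to a cosilting torsion pair in $\Mod{\Lambda}$, and then transfers back via a silting--cosilting duality. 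Your $(3)\Rightarrow(2)$ is correct, and your $(1)\Rightarrow(3)$ is on the right track, though as you yourself acknowledge the crucial step---that a torsion pair in $\Mod{\Lambda}$ is determined by its restriction to $\mod{\Lambda}$ under $\tau$-tilting finiteness---is itself nontrivial and is part of what \cite{AMV19} establishes rather than an input to it.
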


The triangulated category analogue of $\tau$-tilting finiteness is \emph{silting discreteness} \cite{AM17}. We recall the following characterisation of a silting-discrete finite-dimensional algebra.

\begin{theorem} \label{silting-discrete}
 The following statements are equivalent for  $\Lambda$.
\begin{enumerate}
\item For any  $M$ in $\siltLambda$ and any $n>1$, the set $\siltL{n}{M}$ is finite.
\item For any $M$ in $\siltLambda$, the set $\siltL{2}{M}$ is finite.
\item For any $M$ in $\siltLambda$, the finite-dimensional algebra $\End(M)$ is $\tau$-tilting finite.
\end{enumerate}
\end{theorem}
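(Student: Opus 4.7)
The plan is to prove the implications in the order (1)$\Rightarrow$(2), (2)$\Leftrightarrow$(3), (3)$\Rightarrow$(1). The first is immediate by specialising $n=2$ in (1), so no separate argument is required.

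For (2)$\Leftrightarrow$(3), the strategy is to exploit the derived-equivalence-type picture in Proposition~\ref{Koenig-Yang}(3), which identifies the heart $\sh_M$ of the bounded t-structure on $\Db(\Lambda)$ attached to $M$ with $\mod{\End(M)}$. Transferring the Adachi--Iyama--Reiten correspondence across this equivalence yields a bijection (essentially given by $N\mapsto H^0_M(N)$) between $\siltL{2}{M}$ and the set of support $\tau$-tilting $\End(M)$-modules, up to equivalence. Combining this with Theorem~\ref{taufinite}, which identifies $\tau$-tilting finiteness of $\End(M)$ with the finiteness of its support $\tau$-tilting set, delivers the equivalence.

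For (3)$\Rightarrow$(1), the plan is an induction on $n\geq 2$. The base case $n=2$ is precisely the implication (3)$\Rightarrow$(2) just established. For the inductive step, the key claim is the decomposition
\[
\siltL{n+1}{M} \;=\; \bigcup_{M'\in\siltL{n}{M}}\siltL{2}{M'}.
\]
The inclusion ``$\supseteq$'' follows from transitivity of the partial order: given $M'\in\siltL{n}{M}$ and $N\in\siltL{2}{M'}$, we have $\add{M}\geq\add{M'}\geq\add{N}$, while shifting the inequality $\add{M'}\geq\add{M[n-1]}$ by one yields $\add{N}\geq\add{M'[1]}\geq\add{M[n]}$, so $N\in\siltL{n+1}{M}$. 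Granting the full decomposition, the inductive hypothesis gives $\siltL{n}{M}$ finite, and (3) applied to each $M'\in\siltL{n}{M}$ (via (2)$\Leftrightarrow$(3)) gives each $\siltL{2}{M'}$ finite. Hence $\siltL{n+1}{M}$ is a finite union of finite sets.

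The hard part will be the reverse inclusion ``$\subseteq$'', namely producing for each $N\in\siltL{n+1}{M}$ an intermediate silting object $M'\in\siltL{n}{M}$ with $N\in\siltL{2}{M'}$. The natural strategy is a silting mutation argument. Since $N$ lies in $\add{M}*\add{M[1]}*\cdots*\add{M[n]}$ by Lemma~\ref{interm}(b), one attempts to construct $M'$ by successively left-mutating $M$ at the indecomposable summands that obstruct $\add{M'}\geq\add{N}$, using the truncation triangles afforded by the co-t-structure $(\sU_N,\sV_N)$ attached to $N$ via Proposition~\ref{classic to special}(1). The main technical task is to verify that the resulting $M'$ remains silting (not merely presilting) at each mutation step and that it can be arranged so that $\add{M'}\geq\add{M[n-1]}$, placing it in $\siltL{n}{M}$.
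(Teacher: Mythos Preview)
The paper's own proof consists solely of two citations: $(1)\Leftrightarrow(2)$ is \cite[Theorem 2.4]{AM17} and $(2)\Leftrightarrow(3)$ is \cite[Theorem 4.6]{IJY14}. So there is no detailed argument in the paper to compare against; you are effectively reconstructing the content of those references. Your treatment of $(2)\Leftrightarrow(3)$ is correct and is exactly the \cite{IJY14} argument (the restricted bijection you invoke is recorded later in the paper as the second half of Proposition~\ref{silting modules bijection}).

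Your proposal for $(3)\Rightarrow(1)$ has a genuine gap. You correctly identify that the substance lies in the inclusion $\siltL{n+1}{M}\subseteq\bigcup_{M'}\siltL{2}{M'}$, but you do not prove it: the final paragraph only sketches a mutation strategy and explicitly defers the verification that the mutated object stays silting and lands in $\siltL{n}{M}$. That verification is not routine. Iterated left mutation of $M$ towards $N$ does not obviously terminate in the right interval, and controlling the position of the mutated object relative to \emph{both} endpoints $M$ and $M[n-1]$ simultaneously is exactly the delicate point.

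A cleaner route, and the one actually used (in the large setting) in Lemma~\ref{large and bounded = small} of this paper following \cite{PSZ18}, is to work with t-structures rather than mutations and to use the \emph{other} decomposition $\siltL{n+1}{M}=\bigcup_{M'\in\siltL{2}{M}}\siltL{n}{M'}$. Given $N\in\siltL{n+1}{M}$, the pair $(\sv,\sw)$ with $\sw:=\sw_M[1]\cap\sw_N$ is a t-structure satisfying $\sw_M[1]\supseteq\sw\supseteq\sw_M$, hence an HRS tilt of $(\sv_M,\sw_M)$ at some torsion pair in $\sh_M\simeq\mod{\End(M)}$. Condition~(3) says $\End(M)$ is $\tau$-tilting finite, so by Theorem~\ref{taufinite} this torsion pair comes from a support $\tau$-tilting module, hence from some $M'\in\siltL{2}{M}$ with $\sw_{M'}=\sw$. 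One then checks directly from the inclusions that $N\in\siltL{n}{M'}$, and the induction runs. This avoids any mutation bookkeeping and uses hypothesis~(3) in an essential way at each step.
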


If these equivalent conditions hold then $\Lambda$ is called \emph{silting discrete}.

\begin{proof}
The assertion $(1)\Leftrightarrow (2)$ is \cite[Theorem 2.4]{AM17}, and the assertion $(2)\Leftrightarrow (3)$ is \cite[Theorem 4.6]{IJY14}. 
\end{proof} 

\subsection{Another characterisation of silting discreteness}

In this section, we add a further characterisation to the list in Theorem~\ref{silting-discrete} by 
proving that silting-discrete finite-dimensional algebras are those whose large, bounded silting theory in $\sD(\Lambda)$ coincides with the classic silting theory in $\per{\Lambda}$. This can be regarded as a triangulated version of Theorem~\ref{taufinite}.

\begin{theorem}\label{large is small silting discrete}
A finite-dimensional algebra $\Lambda$ is silting discrete if and only if every large silting object in $\sD(\Lambda)$ which lies in $\Kb(\Proj{\Lambda})$ is perfect up to equivalence. 
In other words, $\Lambda$ is silting discrete if and only if  $\SiltL{n}{\Lambda} = \siltL{n}{\Lambda}$ for each $n>1$. 
\end{theorem}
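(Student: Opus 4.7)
The plan is to prove each direction of the equivalence separately. For the \emph{only if} direction, I assume $\Lambda$ is silting-discrete and show $\SiltL{n}{\Lambda} = \siltL{n}{\Lambda}$ for all $n > 1$ by induction on $n$. For the \emph{if} direction, by Theorem~\ref{silting-discrete}(3) it suffices to show $\End(M)$ is $\tau$-tilting finite for every $M \in \siltLambda$, which I deduce from the assumption via a derived equivalence.

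The base case $n = 2$ combines Theorem~\ref{silting-discrete}(3) with Theorem~\ref{taufinite}: silting-discreteness gives that $\End(\Lambda) = \Lambda$ is $\tau$-tilting finite, so every silting $\Lambda$-module is equivalent to a finite-dimensional (support $\tau$-tilting) one. Coupled with the standard bijection between $\SiltL{2}{\Lambda}$ and silting $\Lambda$-modules up to equivalence (via $N \mapsto H^0_\Lambda(N)$, cf.~\cite{AMV16}), this yields $\SiltL{2}{\Lambda} = \siltL{2}{\Lambda}$. For the inductive step, let $N \in \SiltL{n+1}{\Lambda}$ with $n \geq 2$, so that $\sV_\Lambda[n] \subseteq \sV_N \subseteq \sV_\Lambda$. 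The plan is to find a classic two-term silting $M \in \siltL{2}{\Lambda}$ with $N \in \SiltL{n}{M}$ and then transfer along the derived equivalence induced by $M$. To obtain $M$, consider the aisle $\sV^{(1)} = \sV_N * \sV_\Lambda[1]$; generated as a t-structure by the set $\{N[k], \Lambda[k+1] \mid k \geq 0\}$, it is an aisle by Lemma~\ref{saorin-stovicek}, and sandwiched between $\sV_\Lambda[1]$ and $\sV_\Lambda$. By HRS-tilt theory, $\sV^{(1)}$ corresponds to a torsion class $T = \sV^{(1)} \cap \sH_\Lambda$ in $\Mod{\Lambda}$. Silting-discreteness of $\Lambda$ implies $\Lambda$ is $\tau$-tilting finite, hence by Theorem~\ref{taufinite}(3), $T = \Gen{X}$ for a finite-dimensional silting module $X$, which via the silting-module correspondence gives a classic two-term silting $M \in \siltL{2}{\Lambda}$ with $\sV_M = \sV^{(1)}$. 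Now
\[
\sV_M[n-1] = \sV_N[n-1] * \sV_\Lambda[n] \subseteq \sV_N,
\]
combined with $\sV_N \subseteq \sV_M$, gives $N \in \SiltL{n}{M}$. The classic silting $M$ induces a derived equivalence $F \colon \sD(\Lambda) \simeq \sD(\End(M))$ (cf.~Proposition~\ref{Koenig-Yang}), under which $F(N) \in \SiltL{n}{\End(M)}$. Since $F$ restricts to an isomorphism of silting posets preserving endomorphism algebras, silting-discreteness transfers from $\Lambda$ to $\End(M)$ via Theorem~\ref{silting-discrete}(3), so $\End(M)$ is silting-discrete and the inductive hypothesis yields $F(N)$---hence $N$---equivalent to a classic silting.

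For the \emph{if} direction, by the reduction above I need to show every silting $\End(M)$-module is finite-dimensional up to equivalence, for each $M \in \siltLambda$. Transferring across the derived equivalence $F \colon \sD(\Lambda) \simeq \sD(\End(M))$, this is the claim that every $N \in \SiltL{2}{M}$ is equivalent to a classic silting. Silting $\End(M)$-modules admit two-term projective presentations, so $F(N) \in \Kb(\Proj{\End(M)})$, hence $N$ lies in $\thick{}{\Add{M}} \subseteq \Kb(\Proj{\Lambda})$, and the hypothesis applies to conclude $N$ is classic. The main obstacle will be the HRS-slicing in the inductive step: rigorously verifying that $\sV^{(1)} = \sV_N * \sV_\Lambda[1]$ is an aisle (so HRS-tilt theory applies) and that its heart torsion class is of the form $\Gen{X}$ for a finite-dimensional silting module; both rely on $\tau$-tilting finiteness of $\Lambda$ supplied by silting-discreteness, together with the silting-module correspondence.
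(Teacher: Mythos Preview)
Your construction of the intermediate two-term classic silting $M$ via the t-structure generated by $\{N[k],\Lambda[k+1]\mid k\ge 0\}$ together with HRS tilting and $\tau$-tilting finiteness is exactly the paper's argument (Lemma~\ref{large and bounded = small}). The substantive problem is what comes after.

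You assert that a classic silting object $M\in\per\Lambda$ induces a derived equivalence $F\colon\sD(\Lambda)\simeq\sD(\End(M))$, citing Proposition~\ref{Koenig-Yang}. That proposition says nothing of the sort: it only identifies the \emph{heart} of the silting t-structure with $\Mod(\End(M))$. A derived equivalence exists precisely when $M$ is a \emph{tilting} complex (Rickard), i.e.\ when additionally $\Hom(M,M[i])=0$ for $i<0$, and this fails for silting objects in general. For a concrete counterexample take $\Lambda=kA_2$ with arrow $1\to 2$ and $M=P_1[1]\oplus P_2$: this is two-term silting with $\End(M)\cong k\times k$, but $\sD(kA_2)\not\simeq\sD(k\times k)$ since the latter decomposes as a product of triangulated categories. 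Consequently your inductive step collapses: you have $N\in\SiltL{n}{M}$ inside $\sD(\Lambda)$, but no way to transport this to $\SiltL{n}{\End(M)}$ inside $\sD(\End(M))$, and your inductive hypothesis (stated only for $\Lambda$) does not apply.

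The paper repairs this by strengthening the inductive statement: it proves that for \emph{every} classic silting $S\in\per\Lambda$ and every $M\in\SiltL{n}{S}$ there is a classic $T$ with $M\in\SiltL{2}{T}$. The induction then stays entirely inside $\sD(\Lambda)$, merely replacing the reference silting $S$ by the newly constructed $N\in\siltL{2}{S}$ at each step; no change of ambient category is needed. Your \emph{if} direction has the same flaw but is easily salvaged without any equivalence: for $N\in\SiltL{2}{M}$ one has $N\in\Add{M}*\Add{M[1]}\subseteq\Kb(\Proj\Lambda)$ directly from Lemma~\ref{interm}, so the hypothesis applies; then Proposition~\ref{silting modules bijection} (the bijection for arbitrary classic $M$, not just $M=\Lambda$) and Theorem~\ref{taufinite} give that $\End(M)$ is $\tau$-tilting finite.
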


We recover the following result of Aihara and Mizuno immediately from Theorem~\ref{large is small silting discrete}.

\begin{corollary}[{\cite[Theorem 2.15]{AM17}}]
Let $\Lambda$ be a silting-discrete finite-dimensional algebra. Then every classic presilting object $X$ in $\per{\Lambda}$ admits a complement in $\per{\Lambda}$.
\end{corollary}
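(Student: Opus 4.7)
The plan is to use Theorem~\ref{large is small silting discrete} as a bridge: first obtain a large silting complement via Theorem~\ref{large completion}, then upgrade it to a classic one by silting-discreteness.

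After replacing $X$ by a suitable shift (which does not affect the existence of a complement in $\per{\Lambda}$), I may assume that $\Add{\Lambda} \geq \add{X}$. By Remark~\ref{comparison}, $X$ is then a large presilting object in $\sD(\Lambda)$ lying in $\thick{}{\Add{\Lambda}} = \Kb(\Proj{\Lambda})$, so Corollary~\ref{cor2} (or directly Theorem~\ref{large completion} with $M = \Lambda$) furnishes an object $V \in \Kb(\Proj{\Lambda})$ such that $X \oplus V$ is a large silting object in $\sD(\Lambda)$. Because the construction of the complement in Theorem~\ref{large completion} places $V$ inside $\sV_\Lambda$, we have $\Add{\Lambda} \geq \Add{X \oplus V}$; combining this with $X \oplus V \in \thick{}{\Add{\Lambda}}$ and the large-silting version of Lemma~\ref{interm}(b) yields an integer $n \geq 1$ with $\Add{\Lambda} \geq \Add{X \oplus V} \geq \Add{\Lambda[n]}$, that is, $X \oplus V \in \SiltL{n+1}{\Lambda}$.

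Next, I would invoke Theorem~\ref{large is small silting discrete}: silting-discreteness of $\Lambda$ gives $\SiltL{n+1}{\Lambda} = \siltL{n+1}{\Lambda}$, so there exists a classic silting object $M' \in \per{\Lambda}$ with $\Add{M'} = \Add{X \oplus V}$. It remains to extract a classic complement of $X$ itself from $M'$. Since $X$ is compact in $\sD(\Lambda)$ and lies in $\Add{M'}$, the identity of $X$ factors through a finite subcoproduct of any coproduct of copies of $M'$ of which $X$ is a summand, so $X \in \add{M'}$. Taking $M'$ to be basic and letting $V''$ be the direct sum of the indecomposable summands of $M'$ not appearing in $X$, we get $\add{X \oplus V''} = \add{M'}$; consequently $X \oplus V''$ is a classic silting object in $\per{\Lambda}$, and $V''$ is the desired complement.

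The decisive step is the appeal to Theorem~\ref{large is small silting discrete}, which converts the large complement provided by Theorem~\ref{large completion} into a classic one. The only piece of bookkeeping I expect to require some care is verifying that $X \oplus V$ lands in $\SiltL{n+1}{\Lambda}$; this is where the initial shift of $X$ and the choice of $V$ in $\sV_\Lambda$ matter, but it reduces to a direct application of Lemma~\ref{interm}(b).
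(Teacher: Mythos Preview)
Your proof is correct and follows essentially the same route as the paper's: obtain a large silting complement $V\in\Kb(\Proj{\Lambda})$ via Corollary~\ref{cor2}/Remark~\ref{small admits large complement}, then invoke Theorem~\ref{large is small silting discrete} to conclude that $X\oplus V$ is equivalent to a perfect silting object. The paper's version is terser---it applies Theorem~\ref{large is small silting discrete} directly to the fact that $X\oplus V\in\Kb(\Proj{\Lambda})$ (so your shift of $X$ and the verification that $X\oplus V\in\SiltL{n+1}{\Lambda}$ are not strictly needed), and it leaves the final extraction of the compact complement $V''$ from $M'$ implicit---but you have filled in those details correctly.
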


\begin{proof}
It follows from 
Remark \ref{small admits large complement} that $X$ admits a complement to a large silting object in $\sD(\Lambda)$, i.e.\ there is $V$ such that $X\oplus V$ is large silting in $\sD(\Lambda)$, and moreover, 
$V$ can be chosen in $\Kb(\Proj{\Lambda})$. Therefore  $X\oplus V$  is a large silting object that is a bounded complex of projective $\Lambda$-modules, and by Theorem~\ref{large is small silting discrete}, it is equivalent to a classic silting object in $\per{\Lambda}$.
\end{proof}

The rest of this section is devoted to the proof of Theorem~\ref{large is small silting discrete}.
For the reverse implication, 
we will need the following generalisation of \cite[Theorem 3.2]{AIR14} and \cite[Theorem 4.9]{AMV16} which follows the spirit of \cite{IJY14} in making a `basis-free' statement. 

\begin{proposition} \label{silting modules bijection}
Let $M$  a classic silting object in $\per{\Lambda}$. Write $\Gamma=\End(M)$ and let $H^0:=H^0_M \colon \sD(\Lambda) \to \Mod{\Gamma}$ be the cohomological functor associated to the t-structure $(\sV_M,\sW_M)$ according to Proposition~\ref{Koenig-Yang}.
There is a bijection 
\begin{align*}
\SiltL{2}{M} & \bij \{\text{silting $\Gamma$-modules up to equivalence}\} \\
T & \longmapsto H^0_M(T) 
\end{align*}
which restricts to a bijection  
\[
\siltL{2}{M} \bij \{\text{support $\tau$-tilting $\Gamma$-modules up to equivalence}\}.
\] 
\end{proposition}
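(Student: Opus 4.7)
The plan is to mimic the proofs of \cite[Theorem~4.9]{AMV16} and \cite[Theorem~3.2]{AIR14}, which handle exactly the case $M = \Lambda$ (and $\Gamma = \Lambda$), by transferring the argument through the cohomological functor $H^0 := H^0_M$ and the equivalence of categories it induces. The essential input is Proposition~\ref{Koenig-Yang}(\ref{projective-generator}): $H^0$ restricts to an equivalence $\Add{M} \simeq \Proj{\sH_M} \simeq \Proj{\Gamma}$ and identifies $\sH_M$ with $\Mod{\Gamma}$, sending $M$ to a small projective generator of $\sH_M$. In particular every object of $\Add{M}$ becomes projective in $\sH_M$, so that $\Hom_{\sD(\Lambda)}(N, X[k]) = 0$ for all $N \in \Add{M}$, all $X \in \sH_M$, and all $k > 0$.

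For the forward map, given $T \in \SiltL{2}{M}$, Lemma~\ref{interm}(b) yields a triangle
\[
M_{-1} \xrightarrow{f} M_0 \to T \to M_{-1}[1]
\]
with $M_{-1}, M_0 \in \Add{M}$. Applying $H^0$ to $f$ produces a morphism $\sigma \colon P \to Q$ of projective $\Gamma$-modules, and the long exact cohomology sequence gives a projective presentation $P \xrightarrow{\sigma} Q \to H^0(T) \to 0$. The key computation, obtained by applying $\Hom_{\sD(\Lambda)}(-, X[1])$ to the defining triangle for $X \in \sH_M$ and invoking the vanishing above, yields
\[
\Hom_{\sD(\Lambda)}(T, X[1]) \cong \operatorname{coker}\bigl(\Hom_{\Gamma}(\sigma, H^0(X))\bigr),
\]
together with $\Hom_{\sD(\Lambda)}(T, X[k]) = 0$ for $k \geq 2$. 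This identifies $\sV_T \cap \sH_M$ with the class $\cD_\sigma \subseteq \Mod{\Gamma}$, so that the large silting property of $T$ in $\sD(\Lambda)$ translates into the equality $\cD_\sigma = \Gen{H^0(T)}$, which is exactly the defining condition for $H^0(T)$ to be a silting $\Gamma$-module with respect to the presentation $\sigma$.

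The inverse is constructed by reversing this procedure: given a silting module $N$ with projective presentation $\sigma$, lift $\sigma$ along $\Proj{\Gamma} \simeq \Add{M}$ to a morphism $f \colon M_{-1} \to M_0$ in $\Add{M}$, and take $T$ to be its cone. The presilting property of $T$ and the existence of the associated t-structure $(\sV_T, \sW_T)$ follow from $\Gen{N} = \cD_\sigma$ via the same cohomological computation, combined with Theorem~\ref{large correspondence}. That these constructions are mutually inverse up to equivalence is immediate from the equivalence $\Add{M} \simeq \Proj{\Gamma}$. The restriction to $\siltL{2}{M}$ and support $\tau$-tilting modules then follows from \cite[Proposition~3.15]{AMV16}: $T$ can be chosen in $\per{\Lambda}$ iff $M_{-1}, M_0 \in \add{M}$, iff $P, Q \in \proj{\Gamma}$, iff $N$ is finitely presented, iff $N$ is support $\tau$-tilting.

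The main obstacle I expect is the faithful translation of the full silting condition on $T$ into the equality $\cD_\sigma = \Gen{H^0(T)}$. The inclusion $\Gen{H^0(T)} \subseteq \cD_\sigma$ follows from $H^0(T) \in \cD_\sigma$, visible from the presentation, together with closure of $\cD_\sigma$ under coproducts and epimorphic images. The reverse inclusion requires the full strength of the condition that $\sV_T$ is the aisle of a coproduct-closed t-structure generated by $T$, and the classical argument of \cite[Theorem~4.9]{AMV16} must be carefully adapted to the fact that the ambient t-structure here is not the standard one on $\sD(\Lambda)$ but the tilted one $(\sV_M, \sW_M)$ associated with $M$.
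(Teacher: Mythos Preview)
Your approach is essentially the same as the paper's: construct the forward map via the triangle $M_{-1}\to M_0\to T\to M_{-1}[1]$, identify $\cD_\sigma$ with $\sV_T\cap\sH_M$ through the cohomological computation, and build the inverse by lifting a projective presentation along $\Add{M}\simeq\Proj{\Gamma}$. The forward direction and the injectivity/restriction arguments are fine.

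The gap is in the inverse direction. You assert that ``the presilting property of $T$ and the existence of the associated t-structure $(\sV_T,\sW_T)$ follow from $\Gen{N}=\cD_\sigma$ via the same cohomological computation, combined with Theorem~\ref{large correspondence}.'' But Theorem~\ref{large correspondence} only tells you that silting objects correspond to certain co-t-structures; it does not give a criterion for checking that a given $T$ is large silting. What you actually need (and what the paper does) is to verify the hypotheses of Theorem~\ref{Breaz}: that $T\in\sV_M$, that $\sV_M[1]\subseteq\sV_T$, that $\Add{T}\subseteq\sV_T$, and that $T$ is a weak generator. The first two are immediate from the construction. The third is not quite ``the same cohomological computation'': $T^{(I)}$ does not lie in $\sH_M$, so you cannot directly apply your identification of $\cD_\sigma$ with $\sV_T\cap\sH_M$; the paper handles this by passing through the canonical map $T^{(I)}\to H^0(T^{(I)})\cong Y^{(I)}$ and using that $\Hom(M_i,-)$ sends it to an isomorphism. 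The fourth---weak generation---is the substantive missing step. The paper's argument shows that if $\Hom(T,Z[j])=0$ for all $j$, then each $H^j(Z)$ lies in both $\cD_\sigma$ and the torsionfree class $Y^\perp$, hence vanishes; this uses the truncation triangles for $(\sV_M,\sW_M)$ and is exactly the adaptation of \cite[Theorem~4.9]{AMV16} to the tilted t-structure that you flagged as a concern, but you placed that concern on the wrong side of the bijection.
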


\begin{proof}
We fix the notation $\sU_T:={}^\perp \sV_T$ and $\sU_M:={}^\perp\sV_M$ for the left orthogonal subcategories of $\sV_T$ and $\sV_M$ in $\sD(\Lambda)$, as in previous sections.

We begin by showing the assignment is well defined. 
Suppose $T$ is an object in $\sD(\Lambda)$ that lies in $\SiltL{2}{M}$. By Lemma~\ref{interm}(2) there is a triangle of the form
\[
\begin{tikzcd}
M_1 \ar[r, "\Sigma"] & M_0 \ar[r] & T \ar[r] & M_1[1],
\end{tikzcd}
\]
with $M_0$ and $M_1$ in $\Add{M}$. By Proposition~\ref{Koenig-Yang}(2), applying the cohomological functor $H^0$ to this triangle  we obtain a projective presentation of $H^0(T)$:
\[
\begin{tikzcd}
H^0(M_1) \ar[r, "\sigma:=H^0(\Sigma)"] &[4ex] H^0(M_0) \ar[r] & H^0(T) \ar[r] & 0.
\end{tikzcd}
\]
We claim that $H^0(T)$ is a silting $\Gamma$-module with respect to the projective presentation $\sigma$.  

{\bf Step 1:} {\it A $\Gamma$-module $X$, regarded as an object of $\sH_M$, lies in $\cD_\sigma$ if and only if it lies in $\sV_T$, or equivalently, $\Hom_{\sD(\Lambda)}(T,X[1])=0$.} 

The canonical maps $h_1 \colon M_1 \to H^0(M_1)$ and $h_2 \colon M_0 \to H^0(M_0)$ induce the following commutative diagram.
\[
\begin{tikzcd}[column sep= 15ex]
\Hom_{\sD(\Lambda)}(H^0(M_0),X) \ar[r, "{\Hom_{\sD(\Lambda)}(\sigma,X)}"] \ar{d}{\sim}[swap]{\Hom_{\sD(\Lambda)}(h_0,X)} & \Hom_{\sD(\Lambda)}(H^0(M_1),X) \ar{d}{\Hom_{\sD(\Lambda)}(h_1,X)}[swap]{\sim} \\ 
\Hom_{\sD(\Lambda)}(M_0,X) \ar[swap,r, "{\Hom_{\sD(\Lambda)}(\Sigma,X)}"] & \Hom_{\sD(\Lambda)}(M_1,X)
\end{tikzcd}
\]
From this, we conclude that $X$ lies in $\cD_\sigma$ if and only if $\Hom_{\sD(\Lambda)}(\Sigma,X)$ is surjective. This occurs if and only if $\Hom_{\sD(\Lambda)}(T,X[1])=0$ as  $\Hom_{\sD(\Lambda)}(M_0,X[1])=0$.
Moreover, this happens if and only if $X$ lies in $\sV_T$. Indeed, $X$ already lies in $V_T[-1]$ as $\Hom_{\sD(\Lambda)}(T,X[i])=0$ for all $i>1$ because $T$ lies in $\sU_T[1]\subseteq\sU_M[2]$. This latter claim follows from the assumption $\sV_M[1]\subseteq\sV_T$, which gives $\sU_T\subseteq\sU_M[1]$.

{\bf Step 2:} {\it We have $\Gen{H^0(T)} \subseteq \cD_\sigma$.}

It suffices to show that ${H^0(T)^{(I)}} $ lies in $ \cD_\sigma$ for any set $I$, because $\cD_\sigma$ is closed under quotients. Note that $H^0(T)^{(I)}\cong H^0(T^{(I)})$ because $M$ is compact. 
As $T$ and $T^{(I)}$ lie in $\sV_M$, truncating  with respect to $(\sV_M[1],\sW_M[1])$ gives a triangle
\[
\begin{tikzcd}
V[1] \ar[r] & T^{(I)} \ar[r] & W[1] = H^0(T^{(I)}) \ar[r] & V[2].
\end{tikzcd}
\]
As $\sV_M[1]\subseteq \sV_T$ and $T^{(I)} \in \sV_T$ because $T$ is large presilting, it follows that $H^0(T^{(I)})$ lies in $\sV_T$, and thus in $ \cD_\sigma$  by Step 1.

{\bf Step 3:} {\it We have $\cD_\sigma \subseteq \Gen{H^0(T)}$.}

Let $X$ be an object in $\cD_\sigma$ and take the universal map $u\colon H^0(T)^{(I)} \to X$, where $I$ is a basis for the $\kk$-vector space $\Hom_\Gamma(H^0(T),X)$. In order to prove that $u$ is an epimorphism in $\sH_M$, we use the fact that $H^0(T)^{(I)}\cong H^0(T^{(I)})$ again and consider the triangle
\[
\begin{tikzcd}
H^0(T^{(I)}) \ar[r,"u"] & X \ar[r] & K \ar[r] & H^0(T^{(I)})[1]. 
\end{tikzcd}
\]
We show that  $H^0(K)=0$. Since $H^0(M)$ is a projective generator of $\sH_M$ by Proposition~\ref{Koenig-Yang}\eqref{projective-generator}, this amounts to showing that  $\Hom_{\sD(\Lambda)}(M,K)=0$. Now, by assumption $\Add{T}[-1]\ge \Add{M}\ge \Add{T}$, so we know from Lemma~\ref{interm}(2) that    $M$ lies in $\Add{T[-1]} \ast \Add{T}$. Thus, it suffices to check that 
\[\Hom_{\sD(\Lambda)}(T,K)=0=\Hom_{\sD(\Lambda)}(T,K[1]).\]

Since $X$ is an object of $\sH_M$, any morphism $T \to X$ factors through the canonical map $T \to H^0(T)$. This shows that $\Hom_\sT(T,u)$ is an epimorphism. 
It follows that $\Hom_{\sD(\Lambda)}(T,K)=0$ since $\Hom_{\sD(\Lambda)}(T,H^0(T)^{(I)}[1])=0$ from Step 2. Moreover, as $X$ lies in $\cD_\sigma$, we have from Step 1 that $\Hom_{\sD(\Lambda)}(T,X[1])=0$, and we conclude that $\Hom_{\sD(\Lambda)}(T,K[1])=0$. 
Hence we have that $X$ lies in $\Gen{H^0(T)}$ as claimed.

We have thus shown that the assignment $T \mapsto H^0(T)$ is well defined. In order to prove the bijectivity of this map, we observe the following.

{\bf Step 4:} {\it We have $\sV_T=\sV_M[1] * \Gen{H^0(T)}$.}

Indeed, we have that $\sV_M[1] * \Gen{H^0(T)} \subseteq \sV_T$ by assumption and Step 1. For the reverse inclusion, truncate an object $X$ in $\sV_T$ with respect to the t-structure $(\sV_M[1], \sW_M[1])$
\[
\begin{tikzcd}
V[1] \ar[r] & X \ar[r] & W[1] \ar[r] & V[2],
\end{tikzcd}
\]
where, again, $W[1]=H^0(X)$ lies in $\sH_M$ because $\sV_T$ is contained in $\sV_M$.
Now, $H^0(X)$ lies in $\sV_T$ since $\sV_M[1] \subseteq \sV_T$, whence $H^0(X)$ lies in $\cD_\sigma = \Gen{H^0(T)}$ by Step 1.

{\bf Step 5:} {\it The assignment $T \mapsto H^0(T)$ is injective.}

Suppose $T_1$ and $T_2$ are objects of $\SiltL{2}{M}$ such that $\Add{H^0(T_1)} = \Add{H^0(T_2)}$. Then $\Gen{H^0(T_1)} = \Gen{H^0(T_2)}$, and by Step 4 we have $\sV_{T_1} = \sV_{T_2}$. Now we use Theorem~\ref{large correspondence} asserting that  a silting object is determined by the co-t-structure up to equivalence.

{\bf Step 6:} {\it The assignment $T \mapsto H^0(T)$ is surjective.}

Suppose $Y$ is a silting $\Gamma$-module with respect to a map $\sigma \colon H^0(M_1)\to H^0(M_0)$. By Proposition~\ref{Koenig-Yang}(2), $H^0|_{\Add{M}} \colon \Add{M} \to \Proj{\sH_M} = \Add{H^0(M)}$ is an equivalence, and so, there is a unique map $\Sigma\colon M_1 \to M_0$ such that $H^0(\Sigma)=\sigma$. 
Thus, we set $T$ to be the cone of $\Sigma$, i.e. we consider the triangle
\[
\begin{tikzcd}
M_1 \ar[r,"\Sigma"] & M_0 \ar[r] & T \ar[r] & M_1[1],
\end{tikzcd}
\]
from which we observe that $H^0(T)=Y$ and that $T$ lies in $\Add{M}\ast\Add{M[1]}$. We check that $T$ is a large silting object using Theorem~\ref{Breaz}. By the construction of $T$ it is clear that $T$ lies in $\sV_M$ and that $\sV_M[1]\subseteq \sV_T$. It remains to see that $\mathsf{Add}(T)$ is contained in $\sV_T$ and that $T$ is a weak generator.

We first show that $\mathsf{Add}(T)$ is contained in $\sV_T$. For a $\Gamma$-module $X$, applying $\Hom_{\sD(\Lambda)}(-,X)$ to the triangle above tells us that $\Hom_{\sD(\Lambda)}(T,X[1])=0$ if and only if $\Hom_{\sD(\Lambda)}(\Sigma, X)$ is surjective. Consider the commutative diagram:
\begin{equation} \label{surjective}
\tag{$*$} \begin{tikzcd}[column sep= 15ex]
\Hom_{\sD(\Lambda)}(H^0(M_0),Y^{(I)}) \ar[r, "{\Hom_{\sD(\Lambda)}(\sigma,Y^{(I)})}"] \ar{d}{\sim}[swap]{\Hom_{\sD(\Lambda)}(h_0,Y^{(I)})} & \Hom_{\sD(\Lambda)}(H^0(M_1),Y^{(I)}) \ar{d}{\Hom_{\sD(\Lambda)}(h_1,Y^{(I)})}[swap]{\sim} \\ 
\Hom_{\sD(\Lambda)}(M_0,Y^{(I)}) \ar[swap,r, "{\Hom_{\sD(\Lambda)}(\Sigma,Y^{(I)})}"] & \Hom_{\sD(\Lambda)}(M_1,Y^{(I)})\\
\Hom_{\sD(\Lambda)}(M_0,T^{(I)}) \ar[swap,r, "{\Hom_{\sD(\Lambda)}(\Sigma,T^{(I)})}"]\ar{u}{\Hom_{\sD(\Lambda)}(M_0,h_{T^{(I)}})}[swap]{\sim} & \Hom_{\sD(\Lambda)}(M_1,T^{(I)})\ar{u}{\sim}[swap]{\Hom_{\sD(\Lambda)}(M_1,h_{T^{(I)}})}
\end{tikzcd}
\end{equation}
where $h_{T^{(I)}}\colon T^{(I)}\to H^0(T^{(I)})\cong H^0(T)^{(I)}=Y^{(I)}$, $h_1\colon M_1\to H^0(M_1)$ and $h_0\colon M_0\to H^0(M_0)$ are the canonical maps coming from the fact that $M_1$, $M_0$ and $T^{(I)}$ lie in $\sV_M$. The top vertical maps are isomorphisms as in Step 1, and the bottom vertical maps are isomorphisms because $M$ is silting and $T$ lies in $\sV_M$. Since, by assumption, $Y$ is a silting module with respect to $\sigma$ it follows that $\Hom_{\sD(\Lambda)}(\Sigma,T^{(I)})$ is surjective, as required.

Next, we argue that $T$ is a weak generator exactly as in the proof of (4)$\Rightarrow$(1) in \cite[Theorem 4.9]{AMV16}; we transcribe the proof to our setting and notation for the convenience of the reader. Let $Z$ be an object in $\sT$ for which $\Hom_{\sD(\Lambda)}(T,Z[j])=0$ for all $j$ in $\bZ$. For $i$ in $\bZ$, let $v_i$ denote the right adjoint of the inclusion of $\sV_M[i]$ into $\sT$ (i.e., $v_i$ is the truncation with respect to $\sV_M[i]$). Since $T$ lies in $\sV_M$, it follows that $\Hom_{\sD(\Lambda)}(T,-)\cong \Hom_{\sD(\Lambda)}(T,v_0(-))$ and, as $\sV_M[1] \subseteq \sV_T$, there is a natural epimorphism 
\[
\begin{tikzcd}
\Hom_{\sD(\Lambda)}(T,v_0(-)) \arrow[two heads]{r} & \Hom_{\sD(\Lambda)}(T,H^0(-))\cong \Hom_{\sD(\Lambda)}(Y,H^0(-)).
\end{tikzcd}
\]
Hence, $H^j(Z)$ lies in the torsionfree class $Y\orth$ in $\sH_M$ for all $j$ in $\bZ$. From the triangle
\[
H^0(Z[j+1])[-1] \longrightarrow v_1(Z[j+1]) \longrightarrow v_0(Z[j+1]) \longrightarrow H^0(Z[j+1])
\]
we deduce that $\Hom_{\sD(\Lambda)}(T,v_1(Z[j+1]))=0$. Notice that $v_1(Z[j+1])\cong v_0(Z[j])[1]$, and thus $v_0(Z[j])$ lies in $\sV_T$ for all $j$ in $\bZ$. This implies that $\Hom_{\sD(\Lambda)}(\Sigma,v_0(Z[j]))$ is surjective and, using a diagram as in \eqref{surjective} above, that $\Hom_{\sD(\Lambda)}(\sigma,H^0(Z[j]))$ is surjective. This shows that $H^j(Z)$ lies in $\cD_\sigma$ for all $j$ in $\bZ$. Since $(\cD_\sigma,Y\orth)$ is a torsion pair in $\sH_M$, we conclude that $H^j(Z)=0$ for all $j$ and, since $(\sV_M,\sW_M)$ is nondegenerate, we conclude that $Z=0$. This concludes the proof that $T$ is indeed a large silting object in $\sD(\Lambda)$.

Thus,  $T \mapsto H^0(T)$ is a bijection between silting $\Gamma$-modules and $\SiltL{2}{M}$. 

{\bf Step 7:} {\it The assignment $T \mapsto H^0(T)$ restricts to bijection $\siltL{2}{M}$ and support $\tau$-tilting $\Gamma$-modules.}

This bijection is well known, see \cite[Theorem 4.6]{IJY14}. However, for completeness, we explain how the statement can be recovered as a restriction of the assignment in the cocomplete case above.

If $T$ is compact, then $T$ is an object in $\add{M} * \add{M[1]}$ and, therefore, $H^0(T)$ is a finite-dimensional $\Gamma$-module. Conversely, suppose  $H^0(T)$ is a finite-dimensional silting $\Gamma$-module witnessed by a projective presentation $\sigma$ in $\proj{\sH_M} = \add{H^0(M)}$. As in Step 6, we can lift $\sigma$ to a map $\Sigma$ in $\add{M}$. We then observe that the cone of $\Sigma$ is a compact silting object lying in $\siltL{2}{M}$ and whose zeroth cohomology is $H^0(T)$. 
\end{proof}

We now turn to the forward implication in Theorem~\ref{large is small silting discrete}, which is implicitly contained in \cite[Lemma 3.5]{PSZ18}. We provide details for the convenience of the reader.

\begin{lemma} \label{large and bounded = small}
Let $\Lambda$ be a silting-discrete, finite-dimensional $\kk$-algebra. Suppose $M$ and $S$ are large silting objects such that $S$ lies in $\per{\Lambda}$ and $M$ lies in $\SiltL{n}{S}$ for some natural number $n \geq 1$. Then there exists a large silting object $T$ in $\per{\Lambda}$ such that $M$ lies in $\SiltL{2}{T}$. In particular, $\SiltL{n}{\Lambda} = \siltL{n}{\Lambda}$ for all $n > 0$.
\end{lemma}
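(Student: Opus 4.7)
The plan is to prove the main statement by induction on $n$, from which the ``in particular'' claim follows.

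\textbf{Base case ($n \leq 2$):} Take $T = S$ directly. For $n=1$ we have $\Add{M} = \Add{S}$, and for $n=2$ the hypothesis is exactly $M \in \SiltL{2}{S}$.

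\textbf{Inductive step ($n \geq 3$):} Assume the result holds for all $k < n$. Given $M \in \SiltL{n}{S}$, we construct an intermediate classic silting $S' \in \siltL{2}{S}$ such that $M \in \SiltL{n-1}{S'}$; the inductive hypothesis applied to $(S', M)$ then produces the required $T$. The hypothesis $M \in \SiltL{n}{S}$ translates, via Lemma~\ref{interm}(a), into the inclusions $\sV_S[n-1] \subseteq \sV_M \subseteq \sV_S$. Set
\[
\sV := \sV_S \cap \sV_M[-(n-2)].
\]
Using that $\sV_M$ is closed under positive shifts and that $\sV_S[n-1] \subseteq \sV_M$, one checks both $\sV_S[1] \subseteq \sV$ and $\sV_M \subseteq \sV$. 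Since $\sV$ coincides with $\cS^\perp$ for the set $\cS = \{S[k], M[-(n-2)+k] : k < 0\}$, Lemma~\ref{saorin-stovicek} shows that $\sV$ is the coaisle of a co-t-structure in $\sD(\Lambda)$, generated by a set; combined with Theorem~\ref{large correspondence}, this co-t-structure corresponds to a large silting object $S'$. The inclusions $\sV_S[1] \subseteq \sV_{S'} \subseteq \sV_S$ place $S' \in \SiltL{2}{S}$, while $\sV_{S'}[n-2] = \sV_S[n-2] \cap \sV_M \subseteq \sV_M \subseteq \sV_{S'}$ gives $M \in \SiltL{n-1}{S'}$ as required. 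To see that $S'$ is in fact classic silting, we apply Proposition~\ref{silting modules bijection} to $S$: by silting-discreteness, $\End(S)$ is $\tau$-tilting finite (Theorem~\ref{silting-discrete}(3)), hence by Theorem~\ref{taufinite} every silting $\End(S)$-module is finite-dimensional; via the bijection, every element of $\SiltL{2}{S}$ is equivalent to a compact object, so $S' \in \siltL{2}{S}$.

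For the ``in particular'' assertion, let $M \in \SiltL{n}{\Lambda}$ and apply the main statement with $S = \Lambda$ to obtain a classic silting $T \in \per{\Lambda}$ with $M \in \SiltL{2}{T}$. Repeating the argument of the previous paragraph with $T$ in place of $S$ (silting-discreteness makes $\End(T)$ $\tau$-tilting finite, so Proposition~\ref{silting modules bijection} combined with Theorem~\ref{taufinite} shows that every object of $\SiltL{2}{T}$ is classic silting), we conclude that $M$ itself is equivalent to a classic silting object. Hence $M \in \siltLambda$ and therefore $M \in \siltL{n}{\Lambda}$.

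The principal technical obstacle is the construction of $S'$ in the inductive step, namely verifying that the co-t-structure with coaisle $\sV = \sV_S \cap \sV_M[-(n-2)]$ obtained from Lemma~\ref{saorin-stovicek} is of the form required by Theorem~\ref{large correspondence} (i.e.~admits a right adjacent right nondegenerate t-structure), so that it genuinely arises from a large silting object. This should follow from compact generation of $\sD(\Lambda)$ and the fact that both $\sV_S$ and $\sV_M[-(n-2)]$ come from large silting objects, together with the t-structure form of Lemma~\ref{saorin-stovicek}.
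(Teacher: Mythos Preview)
Your inductive strategy mirrors the paper's, and the ``in particular'' deduction is handled the same way. The substantive difference is in how the intermediate two-term silting is produced: the paper constructs a \emph{t-structure} with coaisle $\sW_S[1]\cap\sW_M$ via Lemma~\ref{saorin-stovicek}, recognises it as an HRS tilt of $(\sV_S,\sW_S)$ at a torsion pair $(\cT,\cF)$ in $\sH_S$, and then uses $\tau$-tilting finiteness of $\End(S)$ together with Step~4 of Proposition~\ref{silting modules bijection} to identify $\cT=\Gen(H^0(N))$ for a compact silting $N\in\siltL{2}{S}$ with $\sV_N=\sV$. You instead construct a \emph{co-t-structure} with coaisle $\sV=\sV_S\cap\sV_M[-(n-2)]$ and appeal to Theorem~\ref{large correspondence}.

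The gap you flag is genuine and not cosmetic. Theorem~\ref{large correspondence} does not say that every set-generated co-t-structure comes from a large silting object; it requires a right adjacent \emph{nondegenerate} t-structure, i.e.\ that $\sV$ is itself the aisle of a t-structure. The t-structure clause of Lemma~\ref{saorin-stovicek} produces t-structures with prescribed \emph{coaisle} $\cS^\perp$, not with prescribed aisle, so your suggested fix does not apply as stated. The intersection of two t-structure aisles is not a t-structure aisle in general, so ``both $\sV_S$ and $\sV_M[-(n-2)]$ come from silting'' is not enough on its own.

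The gap \emph{can} be closed, but doing so forces you through the paper's argument: from $\sV_S[1]\subseteq\sV\subseteq\sV_S$ one checks that $\cT:=\sV\cap\sH_S$ is closed under coproducts, extensions and quotients in $\sH_S\simeq\Mod{\End(S)}$ (quotient-closure uses $\sH_S[1]\subseteq\sV_S[1]\subseteq\sV$), hence is a torsion class; the HRS tilt at $(\cT,\cF)$ then has aisle $\sV_S[1]*\cT$, and a short truncation argument shows this equals $\sV$. At that point you are exactly where the paper is, and you still need Proposition~\ref{silting modules bijection} (specifically its Step~4) plus $\tau$-tilting finiteness to land on a compact $S'$. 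So your route is not wrong, but it is not independent of the paper's: the missing step is precisely the HRS identification that the paper carries out directly.
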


\begin{proof}
As $M$ lies in $\SiltL{n}{S}$, we have $\sV_S[n] \subseteq \sV_M \subseteq \sV_S$, or equivalently, $\sW_S[n] \supseteq \sW_M \supseteq \sW_S$.
By Proposition~\ref{Koenig-Yang}(3), the pair
$(\sv_S,\sw_S) = (\sV_S \cap \Db(\Lambda), \sW_S \cap \Db(\Lambda))$ is a t-structure in $\Db(\Lambda)$ with heart $\sh_S \simeq \mod{\Gamma}$, where $\Gamma := \End_{\sD(\Lambda)}(S)$.
Applying Lemma~\ref{saorin-stovicek} on the set $\{S[k+1], M[k]\mid k\ge 0\}$, we obtain a t-structure
\[
(\sV, \sW:= \sW_S[1] \cap \sW_M\big) \text{ in } \sD(\Lambda).
\]
One can check that 
\begin{equation}\label{first set}\sW[n-1] \supseteq \sW_M \supseteq \sW,\end{equation} 
\begin{equation}\label{second set}\sW_S[1] \supseteq \sW \supseteq \sW_S.\end{equation} 
It suffices to find a large silting object $N \in \per{\Lambda}$ such that $(\sV, \sW) = (\sV_N, \sW_N).$ Indeed, $M$  then lies in $\SiltL{n-1}{N}$ by (\ref{first set}), and the result  follows by induction.

The inclusions (\ref{second set}) show that the t-structure $(\sV , \sW)$ is a Happel--Reiten--Smal\o\ tilt of $(\sV_S, \sW_S)$ (see \cite[Proposition 2.1]{HRS96}), i.e.~there is a torsion pair $(\cT,\cF)$ in $\sH_S$ such that $(\sV , \sW) = (\sV_S[1] * \cT, \cF * \sW_S)$, see for example \cite[Proposition 2.1]{Woolf10}. Since $\Lambda$ is silting discrete, it follows from Theorem \ref{silting-discrete} that $\Gamma$ is $\tau$-tilting finite and, therefore, by Theorem \ref{taufinite}, we have that $\cT=\mathsf{Gen}(Y)$, for a finite-dimensional silting $\Gamma$-module. By Proposition \ref{silting modules bijection}, $Y=H^0(N)$ for some silting object $N$ in $\mathsf{silt}^2_S(\Lambda)$ and, by Step 4 of proof of Proposition \ref{silting modules bijection}, it follows that $\sV_N=\sV_S[1] * \cT=\sV$, as wanted. 

For the last statement, pick an object  $T$ in $\SiltL{n}{\Lambda}$. It lies in $\SiltL{2}{M}$ for some classic silting object in $\per{\Lambda}$ and corresponds to some silting $\End(M)$-module under the bijection in  Proposition~\ref{silting modules bijection}. By assumption $\End(M)$ is $\tau$-tilting finite, and by Theorem~\ref{taufinite} we obtain that $T$ lies in $\siltL{2}{M}$. So  $T$ is a perfect complex and lies in $\siltL{n}{\Lambda}$.
\end{proof}

\begin{proof}[Proof of Theorem~\ref{large is small silting discrete}]
One implication has just been proven in  Lemma~\ref{large and bounded = small}.
For the other implication, suppose $\SiltL{n}{\Lambda} = \siltL{n}{\Lambda}$ for all $n > 1$. 
Given a classic  silting object $M$ in $\per{\Lambda}$, it follows that $\SiltL{2}{M} = \siltL{2}{M}$. 
Indeed, as $M \in \per{\Lambda}$ there exists $n > 1$ such that $M \in \SiltL{n}{\Lambda}$. Hence, for $N \in \SiltL{2}{M}$, we have $N \in \SiltL{n+1}{\Lambda} = \siltL{n+1}{\Lambda}$ and it follows that $N \in \siltL{2}{M}$.
By Proposition~\ref{silting modules bijection}, this can be rephrased as a property of the algebra $\Gamma=\End(M)$, namely,  all silting $\Gamma$-modules are  finite dimensional up to equivalence. But this means that  $\Gamma$  is $\tau$-tilting finite by Theorem~\ref{taufinite}. By Theorem~\ref{silting-discrete} we conclude that $\Lambda$ is silting discrete. 
\end{proof}


\subsection*{Acknowledgments}

We would like to thank the referee for a careful reading of the article and useful suggestions.
The first and third named authors are members of the network INdAM-G.N.S.A.G.A and acknowledge support from the project \textit{REDCOM: Reducing complexity in algebra, logic, combinatorics}, financed by the programme  \textit{Ricerca Scientifica di Eccellenza 2018} of the Fondazione Cariverona, and from the project funded by  NextGenerationEU under NRRP, Call PRIN 2022  No.~104 of February 2, 2022 of Italian Ministry of University and Research; Project 2022S97PMY \textit{Structures for Quivers, Algebras and Representations (SQUARE)}. 
The second author acknowledges the support of the EPSRC of the United Kingdom via grant no. EP/V050524/1.
The third author additionally acknowledges financial support from the Department of Mathematics `Tullio Levi-Civita' through its BIRD - Budget Integrato per la Ricerca dei Dipartimenti 2022, within the project \textit{Representations of quivers with commutative coefficients}.


\vspace{-0.4cm}

\Addresses

\begin{landscape}

\appendix


\begin{figure} 
\begin{center}
\input{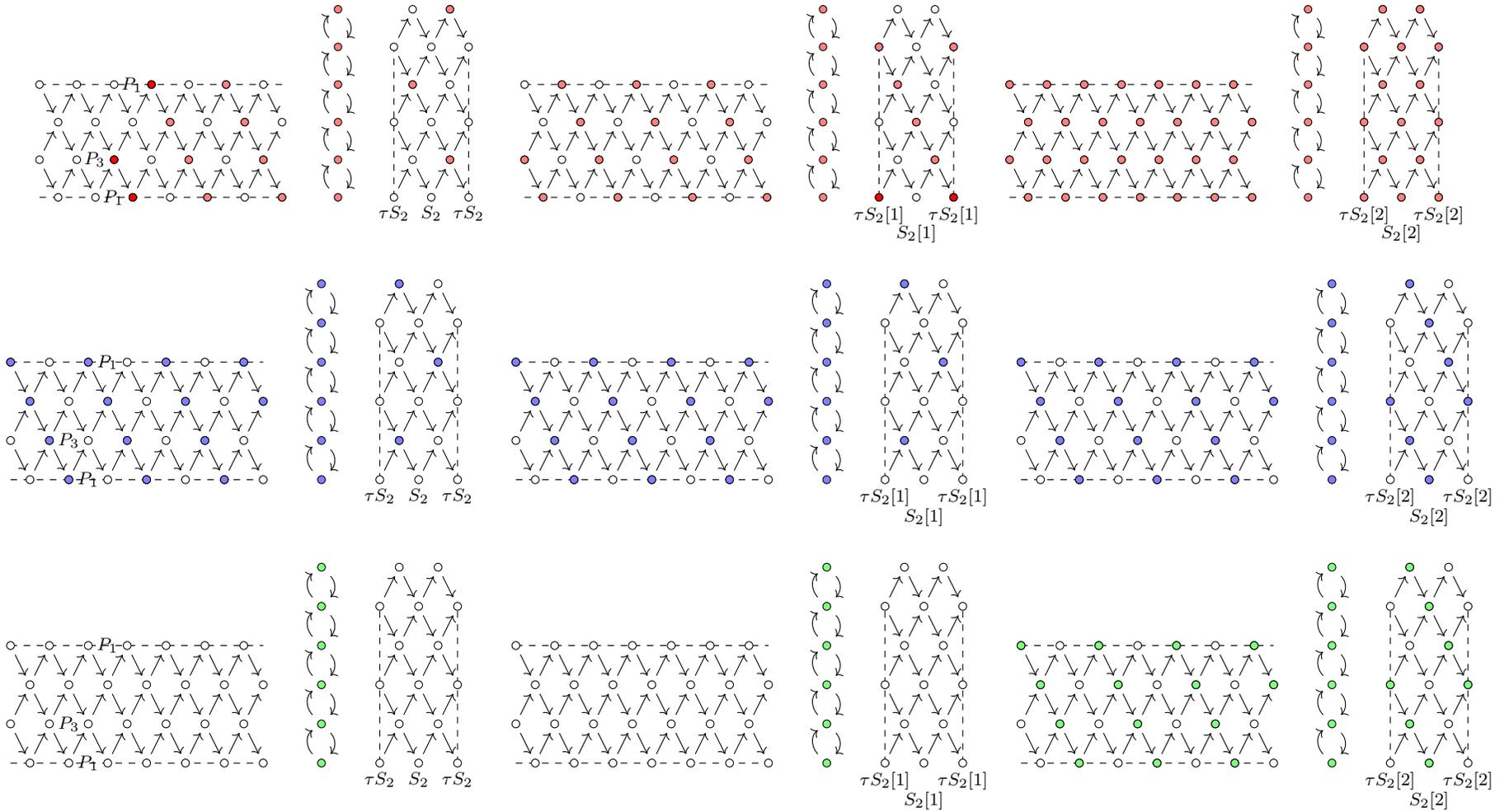}
\end{center}
\caption{Each figure shows a region of the Auslander--Reiten quiver of $\Db(\kk Q)$. Top: the coaisle $\sV_M$ associated to the silting object $M$ is marked in red, with the silting object $M$ marked in deeper red. 
Middle: the (unbounded) coaisle $\sV_X$ associated with the presilting object $X = S_2[2] \in \sV_M$ is marked in blue. 
Bottom: the intersection $\sV \coloneqq \sV_X \cap \sV_M  = \sV_{X \oplus M}$. One can see that no object in the transjective component containing the shifted projective objects admits a left $\sV$-approximation. $\sV$ is not the coaisle of a co-t-structure.} \label{A2-example}
\end{figure}
\end{landscape}

\newpage


\begin{thebibliography}{BBDG18}

\bibitem[AIR14]{AIR14}
T.~Adachi, O.~Iyama, I.~Reiten, 
{\it $\tau$-tilting theory}, 
Compos.\ Math.\ {\bf 150} (2014), no.\ 3, 415--452,
also \harxiv{1210.1036}.

\bibitem[AMY19]{AMY19}
T.~Adachi, Y.~Mizuno, D.~Yang, 
{\it Discreteness of silting object and t-structures in triangulated categories}, 
Proc.\ London Math.\ Soc.\ {\bf 118} (2019), no.\ 3, 1--42,
also \harxiv{1708.08168}.

\bibitem[AI12]{AI12}
T.~Aihara, O.~Iyama, 
{\it Silting mutation in triangulated categories}, 
J.\ Lond.\ Math.\ Soc.\ (2) {\bf 85} (2012), no.\ 3, 633--668,
also \harxiv{1009.3370}.

\bibitem[AM17]{AM17}
T.~Aihara, Y.~Mizuno, 
{\it Classifying tilting complexes over preprojective algebras of Dynkin type}, 
Algebra Number Theory {\bf 11} (2017), no.\ 6, 1287--1315,
also \harxiv{1509.07387}.

\bibitem[AJS03]{AJS}
L.~Alonso Tarr\'io, A.~Jerem\'ias L\'opez, M.~J.~Souto Salorio, 
{\it Construction of t-structures and equivalences of derived categories}, 
Trans.\ Amer.\ Math.\ Soc.\ {\bf 355} (2003), no.\ 6, 2523--2543,
also \harxiv{math/0307189}.

\bibitem[AC02]{AC02}
L.~Angeleri H\"ugel, F.~U.~Coelho, 
{\it Infinitely generated complements to partial tilting modules}, 
Math.\ Proc.\ Cambridge Philos.\ Soc.\ {\bf 132} (2002), no.\ 1, 89--96.

\bibitem[AMV16]{AMV16}
L.~Angeleri H\"ugel, F.~Marks, J.~Vit\'oria, 
{\it Silting modules},
Int.\ Math.\ Res.\ Not.\ IMRN (2016), no.\ 4, 1251--1284,
also \harxiv{1405.2531}.

\bibitem[AMV19]{AMV19}
L.~Angeleri H\"ugel, F.~Marks, J.~Vit\'oria, 
{\it A characterisation of $\tau$-tilting finite algebras}.
in ``Model theory of modules, algebras and categories'', Contemp.\ Math.\ Vol.\ 19, pp. 75--89, Amer.\ Math.\ Soc., Providence RI, 2019,
also \harxiv{1801.04312}.
 
\bibitem[AMV20]{AMV20}
L.~Angeleri H\"ugel, F.~Marks, J.~Vit\'oria, 
{\it Partial silting objects and smashing subcategories},
Math.\ Z.\ {\bf 296} (2020), no.\ 3-4, 887--900,
also \harxiv{1902.05817}.

\bibitem[Ba19]{B19}
S.~Bazzoni,
{\it The t-structure induced by an $n$-tilting module},
Trans.\ Amer.\ Math.\ Soc.\ {\bf 371} (2019), no.\ 9, 6309--6340,
also \harxiv{1604.00797}.

\bibitem[BBDG18]{BBDG}
A.~A.~ Beilinson, J.~Bernstein, P.~Deligne, O.~Gabber,
{\it Faisceux Pervers},
Ast\'erisque {\bf 100} (2018), Soc.\ Math.\ France, Second Edition.

\bibitem[Bo13]{Bondal13}
A.~I.~Bondal,
{\it Operations on t-structures and perverse coherent sheaves},
Izv.\ Ross.\ Akad.\ Nauk.\ Ser.\ Mat.\ {\bf 77} (2013), no.\ 4, 5--30,
also \harxiv{1308.2549}.

\bibitem[Bo10]{Bondarko10}
M.~V.~Bondarko, 
{\it Weight structures vs.\ t-structures; weight filtrations, spectral sequences, and complexes (for motives and in general)}, 
J.\ K-Theory {\bf 6} (2010), no.\ 3, 387--504,
also \harxiv{0704.4003}.

\bibitem[Bo81]{Bo}
K.~Bongartz,
{\it Tilted algebras},
in ``Representations of algebras (Puebla, 1980)'', Lecture Notes in Math.\ Vol. 903, pp. 26--38, Springer, Berlin-New York, 1981.

\bibitem[Br24]{Breaz}
S.~Breaz,
{\it On a characterization of (co)silting objects},
J.\ Pure Appl.\ Algebra {\bf 228} (2024), no.\, 10, paper no.\ 107705, 13 pp.,
also \harxiv{2303.06843}.

\bibitem[BPP13]{BPP13}
N.~Broomhead, D.~Pauksztello, D.~Ploog,
{\it Averaging t-structures and extension closure of aisles},
J.\ Algebra {\bf 394} (2013), 51--78,
also \harxiv{1208.5691}.

\bibitem[BY13]{BY}
T.~Br\"ustle, D.~Yang,
{\it Ordered exchange graphs},
in ``Advances in representation theory of algebras'',
EMS Ser.\ Congr.\ Rep.\, pp. 135--193, Eur.\ Math.\ Soc.\, Z\"urich, 2013,
also \harxiv{1302.6045}.

\bibitem[CR18]{CR18}
X.-W.~Chen, C.~M.~Ringel,
{\it Hereditary triangulated categories},
J.\ Noncommut.\ Geom.\ {\bf 12} (2018), no.\ 4, 1425--144,
also \harxiv{1606.08279}.

\bibitem[DF22]{DaFu22}
W.~Dai, C.~Fu,
{\it A reduction approach to silting objects of derived categories of hereditary categories},
Colloq.\ Math.\ {\bf 170} (2022), no.\ 2, 239--252,
also \harxiv{2011.10728}.

\bibitem[DIJ19]{DIJ19}
L.~Demonet, O.~Iyama, G.~Jasso,
{\it $\tau$-tilting finite algebras, bricks, and $g$-vectors},
Int.\ Math.\ Res.\ Not.\ IMRN (2019), no.\ 3, 852--892,
also \harxiv{1503.00285}.

\bibitem[DF15]{DeFe15}
H.~Derksen, J.~Fei,
{\it General presentations of algebras},
Adv.\ Math.\ {\bf 278} (2015), 210--237,
also \harxiv{0911.4913}.

\bibitem[ET01]{ET}
P.~C.~Eklof, J.~Trlifaj,
{\it How to make Ext vanish},
Bull.\ London Math.\ Soc.\ {\bf 33} (2001), no.\ 1, 41--51.

\bibitem[HRS96]{HRS96}
D.~Happel, I.~Reiten, S.~O.~ Smal\o\, 
{\it Tilting in abelian categories and quasitilted algebras}, 
Mem.\ Amer.\ Math.\ Soc.\ {\bf 120} (1996), no.\ 575, viii+88 pp. 

\bibitem[HKM02]{HKM02}
M.~Hoshino, Y.~Kato, J.-I.~Miyachi,
{\it On t-structures and torsion theories induced by compact objects},
J.\ Pure Appl.\ Algebra {\bf 167} (2002), no.\ 1, 15--35,
also \harxiv{math/0005172}.

\bibitem[IJY14]{IJY14}
O.~Iyama, P.~J\o rgensen, D.~Yang,
{\it Intermediate co-t-structures, two-term silting objects, $\tau$-tilting modules, and torsion classes},
Algebra Number Theory {\bf 8} (2014), no.\ 10, 2413--2431,
also \harxiv{1311.4891}.

\bibitem[IYa18]{IYa18}
O.~Iyama, D.~Yang, 
{\it Silting reduction and Calabi-Yau reduction of triangulated categories}, 
Trans.\ Amer.\ Math.\ Soc.\ {\bf 370} (2018), no.\ 11, 7861--7898,
also \harxiv{1408.2678}.

\bibitem[IYo08]{IYo08}
O.~Iyama, Y.~Yoshino,
{\it Mutation in triangulated categories and rigid Cohen--Macaulay modules},
Invent.\ Math.\ {\bf 172} (2008), no.\ 1, 117--168,
also \harxiv{math/0607736}.

\bibitem[JSW23]{JSW}
H.~Jin, S.~Schroll, Z.~Wang,
{\it A complete derived invariant and silting theory for graded gentle algebras},
\harxiv{2303.17474}.

\bibitem[Ka23]{K}
M.~Kalck,
{\it A finite-dimensional algebra with a phantom (a corollary of an example by J.~Krah)},
\harxiv{2304.08417}.

\bibitem[Ke94]{Kel94}
B.~Keller,
{\it Deriving DG categories},
Ann.\ Sci.\ \'Ecole Norm.\ Sup.\ {\bf 27} (1994), no.\ 1, 63--102.

\bibitem[Ke05]{Kel05}
B.~Keller,
{\it On triangulated orbit categories},
Doc. Math. {\bf 10} (2005), 551--581,
also \harxiv{math/0503240}.

\bibitem[KV88]{KV88}
B.~Keller, D.~Vossieck,
{\it Aisles in derived categories},
Deuxi\`eme Contact Franco-Belge en Alg\`ebre (Faulx-les-Tombes, 1987),
Bull.\ Soc.\ Math.\ Belg.\ S\'er.\ {\bf 40} (1988), no.\ 2, 239–253.

\bibitem[KY14]{KY14}
S.~Koenig, D.~Yang, 
{\it Silting objects, simple-minded collections, t-structures and co-t-structures for finite-dimensional algebras}, 
Doc.\ Math.\ {\bf 19} (2014), 403--438,
also \harxiv{1203.5657}.

\bibitem[Kr22]{Krause22}
H.~Krause,
``Homological Theory of Representations''.
Cambridge studies in advanced mathematics {\bf 195}, Cambridge University Press, Cambridge, 2022.

\bibitem[LZ23]{LZ}
Y.-Z.~ Liu, Y.~Zhou,
{\it A negative answer to Complement Question for presilting complexes}, J.\ Alg.\ {\bf 667}, 282--304, 
also \harxiv{2302.12502}.

\bibitem[MSSS13]{MSSS13}
O.~Mendoza Hern\'andez, E.~C. S\'aenz Valadez, V. Santiago Vargas, M.~J.~Souto Salorio,
{\it Auslander--Buchweitz context and co-t-structures},
Appl.\ Categ.\ Structures {\bf 21} (2013), no. 5, 417--440,
also \harxiv{1002.4604}.

\bibitem[Ne21]{Neeman}
A.~Neeman,
{\it The t-structures generated by objects},
Trans.\ Amer.\ Math.\ Soc.\ {\bf 374} (2021), no.\ 11, 8161--8175,
also \harxiv{1808.05267}.

\bibitem[NSZ19]{NSZ19}
P.~Nicol\'as, M.~Saor\'in, A.~Zvonareva,
{\it Silting theory in triangulated categories with coproducts},
J.\ Pure Appl.\ Algebra {\bf 223} (2019), no.\ 6, 2273--2319,
also \harxiv{1512.04700}.

\bibitem[Pa08]{Pauk08}
D.~Pauksztello,
{\it Compact corigid objects in triangulated categories and co-t-structures},
Cent.\ Eur.\ J.\ Math.\ {\bf 6} (2008), no.\ 1, 25--42,
also \harxiv{0705.0102}.

\bibitem[PSZ18]{PSZ18}
D.~Pauksztello, M.~Saor\'in, A.~Zvonareva,
{\it Contractibility of the stability manifold for silting-discrete algebras},
Forum Math.\ {\bf 30} (2018), no.\ 5, 1255--1263,
also \harxiv{1705.10604}.

\bibitem[PV18]{PV18}
C.~Psaroudakis, J. Vit\'oria,
{\it Realisation functors in tilting theory},
Math.\ Z.\ {\bf 288} (2018), no.\ 3-4, 965--1028,
also \harxiv{1511.02677}.

\bibitem[RS89]{RS}
J.~Rickard, A.~Schofield,
{\it Cocovers and tilting modules},
Math.\ Proc.\ Cambridge Philos.\ Soc.\ {\bf 106} (1989), no.\ 1, 1--5.

\bibitem[S\v{S}11]{SS11}
M.~Saor\'in, J.~\Stovicek,
{\it On exact categories and applications to triangulated adjoints and model structures},
Adv.\ Math.\ {\bf 228} (2011), no.\ 2, 968--1007,
also \harxiv{1005.3248}.

\bibitem[\v{S}P16]{SP16}
J.~\Stovicek, D.~Posp\'{i}\v{s}il,
{\it On compactly generated torsion pairs and the classification of co-t-structures for commutative noetherian rings},
Trans.\ Amer.\ Math.\ Soc.\ {\bf 368} (2016), no.\ 9, 6325--6361,
also \harxiv{1212.3122}.

\bibitem[We13]{Wei13}
J.~Wei,
{\it Semi-tilting complexes},
Israel J.\ Math.\ {\bf 194} (2013), no.\ 2, 871--893.

\bibitem[Wo10]{Woolf10}
J.~Woolf, 
{\it Stability conditions, torsion theories and tilting}, 
J.\ London Math.\ Soc.\ {\bf 82} (2010), no.\ 3, 663--682,
also \harxiv{0909.0552}.

\end{thebibliography}
\end{document}